\documentclass[11pt,reqno]{amsart}
\usepackage{mathtools}
\usepackage{xcolor}
\usepackage[a4paper, left = 2cm, right = 2cm, top = 1.5cm, bottom = 1.5cm]{geometry}
\usepackage{dsfont} 
\usepackage[all]{xy}
\usepackage{graphicx} 
\usepackage[
hidelinks]{hyperref}
\usepackage[bb=boondox]{mathalfa}

\usepackage{tikz}  
\usetikzlibrary{arrows.meta} 
\usetikzlibrary{shapes.geometric}

\usepackage{amssymb}

\usepackage{mdframed}

\DeclareMathAlphabet{\mathbbold}{U}{bbold}{m}{n}

\newtheorem{proposition}{Proposition}
\newtheorem{theorem}[proposition]{Theorem}
\newtheorem{lemma}[proposition]{Lemma}
\newtheorem{corollary}[proposition]{Corollary}
\theoremstyle{definition}
\newtheorem{definition}[proposition]{Definition}
\newtheorem{remark}[proposition]{Remark}
\newtheorem{example}[proposition]{Example}

\numberwithin{proposition}{section}
\numberwithin{equation}{section}

\newcommand{\cst}{\ensuremath{\mathrm{C}^\ast}}

\newcommand{\id}{\mathrm{id}}

\newcommand{\I}{\mathds{1}}

\newcommand{\op}{\text{\raisebox{1.5pt}{\scalebox{0.7}{\tiny{\rm{op}}}}}}

\newcommand{\ket}[1]{{\left|#1\right\rangle}}
\newcommand{\bra}[1]{{\left\langle#1\right|}}

\newcommand{\CC}{\mathbb{C}}

\newcommand{\RR}{\mathbb{R}}

\newcommand{\XX}{\mathbb{X}}

\newcommand{\Tr}{\mathrm{Tr}}

\DeclareMathOperator{\C}{C}

\DeclareMathOperator{\Mat}{\mathsf{Mat}}

\title{Laplace operators on quantum graphs}

\author{Arkadiusz Bochniak}
\address{Institute of Theoretical Physics, Jagiellonian University, {\L}ojasiewicza 11, Krak{\'o}w, 30-348, Poland}
\address{Max-Planck-Institut f{\"u}r Quantenoptik, Garching, Germany}
\address{Munich Center for Quantum Science and Technology, Munich, Germany}
\email{arkadiusz.bochniak@uj.edu.pl}

\author{Dawid Jasi{\'n}ski}
\address{University of Warsaw, Faculty of Physics, Department of Mathematical Methods in Physics, Poland}
\email{d.jasinski4@student.uw.edu.pl}

\author{Pawe{\l} Kasprzak}
\address{University of Warsaw, Faculty of Physics, Department of Mathematical Methods in Physics, Poland}
\email{pawel.kasprzak@fuw.edu.pl}

\keywords{Quantum graph, Laplace operator, Incidence operator}

\makeatletter
\@namedef{subjclassname@2020}{\textup{2020} Mathematics Subject Classification}
\makeatother

\begin{document}
    \maketitle

\begin{abstract}
We introduce a class of Laplace operators associated with quantum graphs in the operator-system framework. To this end, we investigate structural properties of quantum graphs related to the Schur product and transposition. Our main innovation is the identification of a specific inner product on the space of matrices with respect to which the projection onto the operator system underlying a quantum graph is orthogonal. This enables us to define a quantum analogue of the classical incidence operator and the associated Laplace operator. We compare this construction with a recently proposed alternative definition and examine the resulting Laplace operators for several families of quantum graphs. These results extend fundamental constructions from spectral graph theory to the operator-algebraic setting, providing a step toward a spectral theory of quantum graphs that generalizes classical graph-theoretic concepts while revealing new connections with quantum information theory and noncommutative geometry.
\end{abstract}

\section{Introduction}
The graph Laplacian is one of the central objects in spectral graph theory, providing a bridge between the combinatorial structure of a graph and its algebraic and spectral properties \cite{Chung,GodsilRoyle,Spielman}. Defined from the adjacency and degree matrices of a graph, the Laplacian encodes information about connectivity, diffusion processes, random walks, and graph partitioning. Its spectrum and eigenvectors reveal key structural characteristics, including the number of connected components, the number of spanning trees, and measures of graph connectivity. In many ways, the graph Laplacian serves as a discrete analogue of the classical Laplace operator from differential geometry and partial differential equations, making it a fundamental tool in both theoretical and applied studies of networks.

Quantum graphs emerged at the intersection of operator algebras and quantum information theory, motivated in particular by the study of zero-error communication through quantum channels. In the classical setting, the confusability graph of a communication channel encodes which input symbols the receiver cannot distinguish, and graph parameters such as the independence number and the Lovász theta function describe limits on zero-error information transmission. The quantum generalization of this framework led to the theory of noncommutative graphs, introduced through operator spaces associated with quantum channels by Duan, Severini, and Winter \cite{Duan_2013}. This perspective may be viewed as a quantum analogue of the adjacency matrix representation of a graph, with classical graphs recovered as diagonal operator structures.

A complementary approach, originating from the work of Weaver \cite{NW2015}, describes quantum graphs as operator systems: self-adjoint linear subspaces of operator algebras containing the identity operator. The operator-system formulation provides a natural framework for studying quantum graph homomorphisms, graph colorings, and noncommutative analogues of classical graph invariants. These two viewpoints, the operator-space approach based on quantum adjacency relations and the operator-system approach based on quantum relations, are closely connected and have led to a rich theory \cite{PSSTW,Stahlke}. A central objective of quantum graph theory is the extension of classical graph invariants to the operator setting. Quantum analogues of the independence number, clique number, chromatic number, orthogonal rank, and the Lovász theta function have been introduced and studied in relation to quantum communication capacities and semidefinite optimization \cite{Duan_2013, DuanWinter, kim2017}. These parameters often differ from their classical counterparts because quantum players may exploit entanglement measurements, leading to genuinely quantum phenomena such as violations of classical coloring constraints and improved communication protocols \cite{Duan_2013, kim2017}.

Quantum graphs are also deeply connected with the theory of quantum games and nonlocal correlations. Many graph-theoretic problems, including graph coloring, homomorphism, and isomorphism questions, admit formulations as synchronous nonlocal games, where players attempt to satisfy graph constraints using classical, quantum, or commuting-operator strategies. Quantum graph homomorphisms can be interpreted as the existence of perfect strategies for suitable nonlocal games, establishing a direct connection between operator-algebraic graph theory and quantum information-theoretic tasks \cite{mancinska2016, roberson2015}. This connection has revealed that quantum graph parameters are not merely abstract generalizations of classical invariants, but quantities with operational meanings in terms of entanglement-assisted protocols.

In this paper, we investigate the role of Laplace-type operators in the setting of quantum graphs and study how classical notions from spectral graph theory extend to noncommutative frameworks. Motivated by its role in classical graph theory, we explore possible constructions and properties of Laplace operators associated with quantum graphs arising from operator-algebraic approaches. In particular, we study the interplay between operator-algebraic structures underlying quantum graphs and analytic properties of their associated Laplace operators, with the broader goal of developing a spectral framework for quantum graphs analogous to the classical theory of graph Laplacians. Such a framework may provide further connections between quantum graph theory, quantum information theory, and noncommutative geometry, as well as contribute to the understanding of quantum graph parameters and their operational interpretations in quantum communication and nonlocality.

The paper is organized as follows. In Section~\ref{sec:class}, we begin with a review of classical graph theory, with particular emphasis on incidence matrices and the graph Laplace operator, which provide the main motivation for our subsequent constructions. In Section~\ref{sec:quantum}, we introduce the basic concepts of quantum graph theory and summarize the main ideas underlying the operator-algebraic approach to quantum graphs. Sections~\ref{sec:Schur} and \ref{sec:transpositions} are devoted to the study of specific properties of the Schur product and transposition operations for quantum graphs. We then investigate edge spaces associated with quantum graphs in Section~\ref{sec:edge} and introduce Hilbert space structures for which the corresponding projections onto the operator systems defining quantum graphs become orthogonal. Building on these constructions, we introduce a quantum analogue of the incidence operator in Section~\ref{sec:incidence}. In Section~\ref{sec:matsuda}, we compare this construction with a related notion introduced in \cite{Matsuda_2024}. Finally, in Section~\ref{sec:examples}, we study properties of Laplace operators associated with families of quantum graphs introduced in \cite{Kiefer_2026}. 

\section{Classical graph theory}
\label{sec:class}

In this section, we review the framework of finite classical graphs. In particular, we recall the standard definitions of directed and undirected graphs, along with their associated adjacency and incidence matrices. By interpreting these concepts in operator-algebraic terms, we lay the groundwork for their quantum analogues.

All graphs are assumed to be finite. We will denote by $\Tr$ the unnormalized trace. The set $\{1,2,\ldots n\}$ will be denoted by $[n]$. The cardinality of a finite set $X$ will be denoted by $|X|$, e.g., $|[n]|=n$.  

\begin{definition}
    A finite classical graph is a pair $G=(V, E)$, where the finite set $V$ is referred to as a set of vertices, and $E \subseteq V \times V$ will be referred to as a set of edges. The source and target maps are denoted by $s, t: E\to V$ respectively, where $s(i,j) = i$ and $t(i,j) = j$. We will typically write $i \sim j$ to denote $(i, j) \in E$.
    
    A graph $G$ is called \emph{directed} if $i \sim j$ implies $j \nsim i$ for all $i, j \in V$. Conversely, $G$ is \emph{undirected} if $i \sim j$ implies $j \sim i$ for all $i, j \in V$. 
    
    If $G$ is an undirected graph, we typically identify the symmetric pairs $(i, j)$ and $(j, i)$ with a single undirected edge $\{i, j\}$. Every undirected graph can be upgraded to a directed one by an arbitrary choice of orientation for every edge.  

    If $(i, i) \in E$, then we say that $G$ has a loop at the vertex $i$. 
\end{definition}

\begin{definition}
    The adjacency matrix of a graph $G=(V, E)$ is a matrix $A \in \Mat_{|V|}(\mathbb{C})$ defined by
    $$
    A_{ij} = \begin{cases}
        1 & \text{if } (j, i) \in E \text{ (i.e., there is an edge from } j \text{ to } i\text{)},\\
        0 & \text{otherwise}.
    \end{cases}
    $$
    
    Moreover, for a directed graph, we define the incidence matrix $K \in \Mat_{|E| \times |V|}(\mathbb{C})$ as
    
    $$K_{e,v} = \begin{cases}
        1 & \text{if } v = t(e),\\
        -1 & \text{if } v = s(e),\\
        0 & \text{otherwise}.
    \end{cases}
    $$
    The incidence matrix of an undirected graph is not uniquely defined. However, we can fix one by choosing an arbitrary orientation of edges. Moreover, if $e$ is a loop, we set $K_{e,v}=0$ for every $v \in V$. 
\end{definition}

\begin{remark}
    Let $K$ be the incidence matrix of an undirected graph $G=(V, E)$, induced by a choice of orientation. For any diagonal matrix $O \in D_{|E|}(\mathbb{C})$ with entries restricted to $\pm 1$ on the diagonal, the matrix $OK$ yields the incidence matrix of the same graph under a different orientation. Specifically, if $O_{e,e} = -1$, the matrix $O$ reverses the orientation of the edge $e$. 
\end{remark}

For the needs of the next definition, let us denote the regular representation of the algebra $\C(V)$ by $L$: given $x\in \C(V)$, the operator $L_x:\C(V)\to \C(V)$ is defined by $L_x(y)=xy$. If $V=[n]$ then, identifying $\C(V)$ with $\mathbb{C}^n$, we can view $L_x$ as the diagonal matrix $L_x=\textrm{diag}(x(1),x(2),\ldots, x(n))$. In particular,  viewing the adjacency matrix as a linear operator on $\mathbb{C}^V$, we can consider commutators $[L_x,A]\in\Mat_n(\mathbb{C})$. This yields the following definition. 
\begin{definition}
For a classical graph $G=(V,E)$ with an adjacency matrix $A$, we define the following {\it incidence} map
\begin{align*}
    \widetilde{K}(x)=[L_x,A], \quad x\in \C(V).
\end{align*}
\end{definition}
To understand the relationship between this transformation and the classical incidence matrices, we first introduce the notion of edge spaces, offering the natural codomain for incidence maps. Given a directed graph, the edge space $S_A$ is defined as the linear span of the standard basis $\{E_{ij} : (j,i) \in E\}$ in the space of matrices. However, for an undirected graph, $A$ is symmetric and thus both $E_{ij}$ and $E_{ji}$ would belong to such $S_A$. In this case, the incidence map $\widetilde{K}$ naturally targets the antisymmetric subspace $S_A^a$. To define a basis for $S_A^a$, we must fix an arbitrary orientation of the graph, choosing exactly one directed edge for each undirected pair. If $E^+ \subset E$ is this chosen set of oriented edges, then $S_A^a$ has a natural basis $\{E_{ij} - E_{ji} : (j,i) \in E^+\}$.

\begin{proposition}
    If $G$ is an undirected graph with a chosen orientation $E^+$, the image of the map $\widetilde{K}$ is contained in the subspace $S_A^a$. Moreover, the coordinate matrix of the operator $\widetilde{K}$ in the basis $\{E_{ij} - E_{ji} : (j,i) \in E^+\}$ is equal to the classical incidence matrix $K$ under the same orientation.
    Similarly, if $G$ is a directed graph, the image of $\widetilde{K}$ lies in $S_A$, and its coordinate matrix in the basis $\{E_{ij} : (j,i) \in E\}$ is exactly equal to $K$.
\end{proposition}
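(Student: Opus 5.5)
The plan is a direct entrywise computation of the commutator, followed by reading off coefficients in the given bases. First I will compute $[L_x,A]$ explicitly. Since $L_x=\mathrm{diag}(x(1),\ldots,x(n))$, we have $([L_x,A])_{ij}=x(i)A_{ij}-A_{ij}x(j)=(x(i)-x(j))A_{ij}$. Using the definition of the adjacency matrix, this gives
\begin{equation*}
\widetilde{K}(x)=\sum_{(j,i)\in E}\bigl(x(i)-x(j)\bigr)E_{ij}.
\end{equation*}
This is the formula everything else rests on. Note that for an edge $e=(j,i)$ the coefficient is exactly $x(t(e))-x(s(e))$.

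\emph{Directed case.} The displayed formula already shows that $\widetilde{K}(x)\in S_A=\mathrm{span}\{E_{ij}:(j,i)\in E\}$. To get the coordinate matrix, I will evaluate on the standard basis $\{\delta_v\}_{v\in V}$ of $\C(V)$. The coefficient of $E_{ij}$, for $e=(j,i)$, in $\widetilde{K}(\delta_v)$ is $\delta_v(t(e))-\delta_v(s(e))$. This equals $1$ if $v=t(e)$, $-1$ if $v=s(e)$, and $0$ otherwise, which is $K_{e,v}$. The only point to check is that these cases do not overlap, i.e.\ that $s(e)\neq t(e)$. This holds because a directed graph has no loops: $i\sim i$ would force $i\nsim i$.

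\emph{Undirected case.} I will split the sum over $E$ according to the orientation $E^+$.
\begin{itemize}
\item Loops $(i,i)$ contribute $(x(i)-x(i))E_{ii}=0$.
\item Every non-loop undirected edge appears as $(j,i)\in E^+$ together with $(i,j)\in E\setminus E^+$. Their contributions combine to
\begin{equation*}
(x(i)-x(j))E_{ij}+(x(j)-x(i))E_{ji}=(x(i)-x(j))(E_{ij}-E_{ji}).
\end{equation*}
\end{itemize}
Hence $\widetilde{K}(x)=\sum_{(j,i)\in E^+}(x(i)-x(j))(E_{ij}-E_{ji})\in S_A^a$. The coefficient of the basis vector indexed by $e=(j,i)\in E^+$ in $\widetilde{K}(\delta_v)$ is again $\delta_v(t(e))-\delta_v(s(e))=K_{e,v}$. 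For loops this coefficient is $0$, which agrees with the convention $K_{e,v}=0$.

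The only delicate step is bookkeeping for loops in the undirected case. For a loop the would-be basis vector $E_{ii}-E_{ii}$ vanishes, so loops must either be excluded from the basis of $S_A^a$ or be treated as zero rows of $K$. I will state this convention explicitly. I will also note that the vectors $E_{ij}-E_{ji}$ for distinct non-loop edges in $E^+$ are linearly independent, so the coordinates are well defined. Apart from this, the argument is routine.
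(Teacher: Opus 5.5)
Your proposal is correct and follows essentially the same route as the paper: compute $[L_x,A]=\sum_{(j,i)\in E}(x_i-x_j)E_{ij}$ entrywise, group the symmetric pairs via $E^+$ in the undirected case, and match coefficients with $x_{t(e)}-x_{s(e)}$. The paper differs only cosmetically. It reads the match off row-wise from $(Kx)_e$ rather than on the basis vectors $\delta_v$, and it gets containment in $S_A$ from the bimodule property rather than from the explicit formula. Your extra bookkeeping fills in details the paper leaves implicit: directed graphs have no loops, a loop placed in $E^+$ would give the degenerate vector $E_{ii}-E_{ii}=0$, and the matrices $E_{ij}-E_{ji}$ have disjoint supports and so are linearly independent.
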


\begin{proof}
    Since $S_A$ is a bimodule generated by $A$, we immediately obtain $\widetilde{K}(\C(V))\subseteq S_A$. For any $x\in \C(V)$:
    $$
    \widetilde{K}(x) = L_x A - A L_x = \sum_{(j,i)\in E} (x_i-x_j)E_{ij}.
    $$
    For an undirected graph, grouping symmetric edges yields:
    $$
    \widetilde{K}(x) = \sum_{(j,i)\in E^+} (x_i-x_j)(E_{ij}-E_{ji}).
    $$
    Since the $e$-th row of the classical incidence matrix $K$ computes $(Kx)_e = x_{t(e)} - x_{s(e)} = x_i - x_j$, the coordinates of $\widetilde{K}(x)$ exactly match the rows of $K$.
\end{proof}

\begin{example}
    \begin{figure}[h!tbp]
    \centering
    
    \begin{minipage}{0.45\textwidth}
        \centering
        \begin{tikzpicture}[
            every node/.style={circle, draw, minimum size=8mm, font=\bfseries}
        ]
            \node (1) at (0, 2) {1};
            \node (2) at (-1.5, 0) {2};
            \node (3) at (1.5, 0) {3};
            
            \draw[thick] (1) -- (2);
            \draw[thick] (2) -- (3);
            \draw[thick] (3) -- (1);
        \end{tikzpicture}
        \caption{Undirected graph $G_1$}
        \label{fig:graph_undirected}
    \end{minipage}\hfill
    \begin{minipage}{0.45\textwidth}
        \centering
        \begin{tikzpicture}[
            every node/.style={circle, draw, minimum size=8mm, font=\bfseries},
            >={Stealth[scale=1.2]} 
        ]
            \node (1) at (0, 2) {1};
            \node (2) at (-1.5, 0) {2};
            \node (3) at (1.5, 0) {3};
            
            \draw[thick, ->] (1) -- (2);
            \draw[thick, ->] (2) -- (3);
            \draw[thick, ->] (3) -- (1);
        \end{tikzpicture}
        \caption{Directed graph $G_2$}
        \label{fig:graph_directed}
    \end{minipage}
    
\end{figure}
Let $G_1$ and $G_2$ be the graphs presented in Figures \ref{fig:graph_undirected} and \ref{fig:graph_directed}. We denote their adjacency matrices, classical incidence matrices, and incidence maps by $A_i, K_i$, and $\widetilde{K}_i$, respectively, where $i\in \{1,2\}$. We assume the orientation $1 \to 2, 2 \to 3, 3 \to 1$ for both graphs. Since $A_{ij}=1$ denotes an edge from $j$ to $i$, we have:
$$
A_1=\left[\begin{array}{ccc}
    0 & 1 & 1 \\
    1 & 0 & 1 \\
    1 & 1 & 0 \\
\end{array}\right], \quad A_2=\left[\begin{array}{ccc}
    0 & 0 & 1 \\
    1 & 0 & 0 \\
    0 & 1 & 0 \\
\end{array}\right]
$$
and their classical incidence matrices are:
$$
K_1=K_2=\left[\begin{array}{ccc}
    -1 & 1 & 0 \\
    0 & -1 & 1 \\
    1 & 0 & -1 \\
\end{array}\right].
$$
Now we compute the algebraic incidence maps:
$$ \begin{aligned}
\widetilde{K}_1(x) &= L_xA_1 - A_1L_x \\
&= \left[\begin{array}{ccc}
    0 & x_1-x_2 & x_1-x_3 \\
    x_2-x_1 & 0 & x_2-x_3 \\
    x_3-x_1 & x_3-x_2 & 0 \\
\end{array}\right] \\
&= (x_2-x_1)(E_{21}-E_{12}) + (x_3-x_2)(E_{32}-E_{23}) + (x_1-x_3)(E_{13}-E_{31})
\end{aligned} $$
and
$$ \begin{aligned}
\widetilde{K}_2(x) &= L_xA_2 - A_2L_x \\
&= \left[\begin{array}{ccc}
    0 & 0 & x_1-x_3 \\
    x_2-x_1 & 0 & 0 \\
    0 & x_3-x_2 & 0 \\
\end{array}\right] \\
&= (x_2-x_1)E_{21} + (x_3-x_2)E_{32} + (x_1-x_3)E_{13}
\end{aligned} $$

By observing the coordinates in the respective bases, ($(E_{ij}-E_{ji})_{(j,i)\in E^+}$ for $G_1$ and $(E_{ij})_{(j,i)\in E}$ for $G_2$) we see they yield $(x_2-x_1, x_3-x_2, x_1-x_3)^T$. This perfectly matches the standard multiplication $K_{1}x$ and $K_{2}x$. 

From now on, we will write $K$ instead of $\widetilde{K}$.
\end{example}
  
Another feature of graphs defined using algebraic methods is the number of edges. The first attempt to do this may be to state that the number of edges is equal to the dimension of the edge space. Nevertheless, the previous example showed us that the dimension of the edge space differs for a given graph depending on the oriented or non-oriented context, and thus this number should be defined depending on it. 
\begin{proposition}
    The number of edges $|E|$ of directed (undirected) graph $G=(V, E)$ is given by any of the following formulas
    \begin{itemize}
        \item $|E|=\textrm{dim}(S_A)$ ($|E|=\textrm{dim}(S_A^a)$)
        \item $|E|=\langle \Omega, A \Omega \rangle$ ($|E|=\frac{1}{2}\langle \Omega, A \Omega \rangle$), where $\langle\cdot, \cdot \rangle$ is canonical (unnormalized) scalar product and $\Omega=[1, \ldots, 1]^T$.
        \item $|E|=\Tr(D)$ ($|E|=\frac{1}{2}\Tr(D)$), where $D$ is the degree matrix of $G$.
    \end{itemize}
\end{proposition}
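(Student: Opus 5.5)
The plan is to reduce all three formulas to counting the entries of the adjacency matrix $A$, which are $0$ or $1$, with $A_{ij}=1$ exactly when $(j,i)\in E$. Throughout I will assume, as the incidence-matrix convention suggests, that in the undirected case the graph has no loops (or that a loop is not counted as an edge). Otherwise the factor $\tfrac12$ must be adjusted, since a loop contributes a single diagonal entry rather than a symmetric pair. I will also take the degree matrix to be $D=\mathrm{diag}(d_1,\dots,d_n)$ with $d_i=\sum_j A_{ij}$, the in-degree in the directed case. For an undirected graph this agrees with the usual degree because $A$ is symmetric.

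\textbf{First formula.} In the directed case, $S_A$ is by definition the span of $\{E_{ij}:(j,i)\in E\}$. These matrix units are distinct elements of the standard basis of $\Mat_n(\mathbb{C})$, hence linearly independent, so $\dim S_A=|E|$. In the undirected case, $S_A^a$ is spanned by $\{E_{ij}-E_{ji}:(j,i)\in E^+\}$. For distinct unordered pairs $\{i,j\}$ with $i\neq j$, these matrices have disjoint supports, so they are linearly independent. Moreover $E^+$ contains exactly one orientation of each undirected edge, so $\dim S_A^a=|E^+|$, which is the number of undirected edges.

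\textbf{Second and third formulas.} Compute directly:
\[
\langle\Omega,A\Omega\rangle=\sum_{i,j}A_{ij}=\#\{(j,i): A_{ij}=1\}=|E\subseteq V\times V|,
\]
the number of ordered pairs in $E$. Likewise $\Tr(D)=\sum_i d_i=\sum_{i,j}A_{ij}$ gives the same number. In the directed case this count is $|E|$. In the undirected case each undirected edge $\{i,j\}$ with $i\neq j$ corresponds to the two ordered pairs $(i,j)$ and $(j,i)$, so the count is twice the number of undirected edges. This yields the factor $\tfrac12$.

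\textbf{Main obstacle.} There is no real analytic difficulty here. The only delicate point is bookkeeping: being consistent about whether $|E|$ counts ordered pairs or undirected edges, and handling loops. Loops give $E_{ii}-E_{ii}=0$ in $S_A^a$, so they are invisible to the first formula, while they contribute $1$ rather than $2$ to $\sum A_{ij}$. I would therefore state explicitly that undirected graphs are taken loopless, or that loops are excluded from the edge count, so that all three formulas agree.
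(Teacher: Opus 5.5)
Your proof is correct and is the natural direct count of the ones in $A$. The paper states this proposition without proof, and the same entry-counting is the reasoning it uses right afterwards to justify the loop, undirected and directed edge counts. There are two points of precision, however.

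First, only looplessness makes the second and third undirected formulas hold; merely excluding loops from $|E|$ does not. Each diagonal $1$ of $A$ still contributes $\tfrac12$ to $\tfrac12\langle\Omega,A\Omega\rangle$ and to $\tfrac12\Tr(D)$, while it contributes nothing to $\dim S_A^a$.

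Second, your in-degree convention $D=L_{A\Omega}$ is essential in the directed case. The paper later writes $\Delta=D-(A+A^T)$ for directed graphs, which forces $D=L_{(A+A^T)\Omega}$ (total in-plus-out degree). With that $D$ one gets $\Tr(D)=2|E|$, so the convention must be stated explicitly, as you did.
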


In general, a graph can have undirected edges $((i,j) ,  (j,i)\in E)$ directed edges, and loops. Their number is coherent with the following definition. 
\begin{definition}\label{loops-directed-undirected}
    Let $A$ be an adjacency matrix of the graph $G=(V, E)$. 
    \begin{itemize}
        \item The number of loops is given by $|E_l|:=\Tr(A)$.
        \item The number of undirected edges is given by $|E_u|:=\frac{1}{2}\Tr(A(A-\id))$.
        \item The number of directed edges is given by $|E_d|:=\Tr(A(A^\dagger-A))$
    \end{itemize}
\end{definition}
Indeed, observe that $\Tr(A)$ enumerates the non-zero diagonal entries, which correspond precisely to self-loops. Meanwhile, $\Tr(A^2)$ yields the number of pairs $(i,j)$ for which both $A_{ij}=1$ and $A_{ji}=1$, thereby accounting for both loops and symmetric (undirected) edges. Finally, since $A$ is a binary matrix, $\Tr(AA^\dagger)$ represents the total number of ones in $A$. Now, the central object of our paper. 
\begin{definition}
    Let $G=(V, E)$ be a graph. The Laplacian of a graph is defined by $\Delta:=K^\dagger K$, where the adjoint ${}^\dagger$ is taken with respect to the Hilbert-Schmidt scalar product on $S_A$.
\end{definition}
\begin{proposition}Let $G$ be a graph and $A$ the adjacency matrix and $\Delta$ the Laplacian.  The following formula holds
    $\Delta=L_{(A+A^T)\Omega}-(A+A^T)$.
\end{proposition}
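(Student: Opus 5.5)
Throughout, $K(x)=[L_x,A]$ is viewed as a map from $\C(V)\cong\CC^n$, with the standard inner product, into $S_A$, with the Hilbert--Schmidt inner product $\langle X,Y\rangle=\Tr(X^\dagger Y)$. The plan is to compute $\langle x,\Delta y\rangle=\langle K(x),K(y)\rangle_{HS}$ directly and read off the matrix of $\Delta$. Since $S_A$ is spanned by matrix units and $K(x)\in S_A$ for every $x$, the adjoint taken inside $S_A$ agrees with the adjoint into all of $\Mat_n(\CC)$. So no projection issue arises and we may simply compute $\Delta=K^\dagger K$ as a Hilbert--Schmidt pairing.

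By the earlier computation,
\[ K(y)=\sum_{(j,i)\in E}(y_i-y_j)E_{ij}. \]
Because the $E_{ij}$ are Hilbert--Schmidt orthonormal, this gives
\[ \langle K(x),K(y)\rangle=\sum_{i,j}A_{ij}\,\overline{(x_i-x_j)}\,(y_i-y_j), \]
using that $A$ is a $0/1$ matrix, so $A_{ij}^2=A_{ij}$. Expanding $\overline{(x_i-x_j)}(y_i-y_j)$ gives four terms:
\begin{itemize}
\item The diagonal terms are $\sum_{i}\bar x_iy_i\sum_j A_{ij}=\langle x,L_{A\Omega}y\rangle$ and $\sum_j\bar x_jy_j\sum_iA_{ij}=\langle x,L_{A^T\Omega}y\rangle$.
\item The cross terms are $-\sum_{i,j}A_{ij}\bar x_iy_j=-\langle x,Ay\rangle$ and $-\sum_{i,j}A_{ij}\bar x_jy_i=-\langle x,A^Ty\rangle$.
\end{itemize}
Summing, $\langle x,K^\dagger K y\rangle=\langle x,(L_{(A+A^T)\Omega}-(A+A^T))y\rangle$ for all $x,y$, which yields the formula.

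Loops need a quick check. If $(i,i)\in E$, the term $y_i-y_i$ vanishes. On the right-hand side the loop contributes $2$ to the degree entry $((A+A^T)\Omega)_i$ and $2$ to the diagonal entry $(A+A^T)_{ii}$, so it cancels there as well.

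The main obstacle is the undirected case, where $K$ is regarded as landing in $S_A^a$ with the basis $E_{ij}-E_{ji}$. These basis vectors have Hilbert--Schmidt norm $\sqrt2$, so $\langle K(x),K(y)\rangle_{HS}$ counts each undirected edge twice. The calculation above would then give $L_{(A+A^T)\Omega}-(A+A^T)=2(L_{A\Omega}-A)$, which is twice the usual Laplacian. The stated formula is only consistent with the Hilbert--Schmidt inner product on the full matrix space, so this is the convention I will use, and I will state it explicitly.

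If instead one insists on coordinates in which the incidence matrix is $K$ with orthonormal basis $\{E_{ij}-E_{ji}\}$, the classical $K^TK=L_{A\Omega}-A$ results and the formula holds only up to that factor. I would add a remark reconciling the two normalizations rather than try to force agreement.
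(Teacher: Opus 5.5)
Your proof is correct, but it takes a different route from the paper.

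\textbf{Your route.} You never compute $K^\dagger$ itself. Instead you evaluate the sesquilinear form $\langle K(x),K(y)\rangle_{HS}=\sum_{i,j}A_{ij}\overline{(x_i-x_j)}(y_i-y_j)$ in the matrix-unit basis. Expanding the four terms then gives $L_{(A+A^T)\Omega}-(A+A^T)$ directly.

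\textbf{The paper's route.} The paper first finds the adjoint explicitly, $K^\dagger(X)=(X-X^T)\Omega$ for $X\in S_A$. It uses the identity $A\cdot_S\ket{v}\bra{w^*}=L_vAL_w$ to write $K(v)$ as a Schur product with $A$. It then drops $A$ in the pairing with $X\in S_A$; this Schur-idempotence step plays the role of your $A_{ij}^2=A_{ij}$. Finally, it obtains $K^\dagger K(x)=(L_x(A+A^T)-(A+A^T)L_x)\Omega$ from $(L_xA)^T=A^TL_x$.

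\textbf{What each buys.} Your computation is shorter and avoids the conjugation bookkeeping. It also covers loops and mixed graphs automatically, since the expansion already runs over $i=j$; your separate loop check is only a sanity check. The paper's route yields a formula for the ``divergence'' $K^\dagger$ itself. That formula is the template reused in the quantum setting, where $X-X^T$ and $\Omega$ become $\sigma_{i/2}\circ X-\sigma_{-i/2}\circ X^{TL}$ and $\I$ in the formula for $\mathcal{K}_L^{\dagger_L}$. A matrix-unit expansion like yours has no direct quantum counterpart.

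\textbf{Your normalization caveat.} It matches the paper's convention. The paper defines $\Delta$ with the Hilbert--Schmidt product on $S_A$, which is just the restriction of the one on $\Mat_{|V|}(\CC)$, so there is no discrepancy with your choice. It also states explicitly that $\Delta=2D-2A$ for undirected graphs, so no further reconciliation remark is needed.
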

\begin{proof}
    First of all viewing $K:\C(V)\to S_A$ as a map between Hilbert spaces (with the Hilbert-Schmidt scalar product on  $S_A\subset\Mat_{|V|}(\mathbb{C})$), we have    $K^\dagger(X)=(X-X^T)\Omega$, where $X\in S_A$. Indeed, observe first that for $A\in M_n(\mathbb{C})$ and $|v\rangle\langle w^*|=\bigl(v_i\overline{w_j}\bigr)_{i,j=1}^n$, where $v,w\in\mathbb{C}^n$, we have $A\cdot_S |v\rangle\langle w^*|= L_vAL_w$, where ${}\cdot_S{}$ denotes the Schur product of matrices. To see this, observe that the $(i,j)$-entry of the Schur product is
\[
\bigl(A\cdot_S |v\rangle\langle w^*|\bigr)_{ij}
=
a_{ij}\,(|v\rangle\langle w^*|)_{ij}
=
a_{ij}v_i\overline{w_j}.
\]
On the other hand,
\[
(L_vAL_w)_{ij}
=
\sum_{k,l=1}^n
(L_v)_{ik}A_{kl}(L_w)_{l j}.
\]
Since $(L_v)_{ik}=v_i\delta_{ik}$ and $(L_w)_{l j}=\overline{w_j}\delta_{l j}$, we get
\[
(L_vAL_w)_{ij}
=
a_{ij}v_i\overline{w_j}
=
\bigl(A\cdot_S |v\rangle\langle w^*|\bigr)_{ij}
\]
for every $1\leq i,j\leq n$. Since the two matrices agree entrywise, the claimed identity follows. Using this property, we have  
    \begin{align*}
        \langle X, K(v) \rangle_{HS}&=\langle X, L_v A-A L_v \rangle_{HS}\\
        &=\langle X, A\cdot_S(\ket{v}\bra{ \Omega}-\ket{\Omega}\bra{v^*})\rangle_{HS}\\
        &=\langle X, (\ket{v}\bra{\Omega}-\ket{\Omega}\bra{v^*})\rangle_{HS}\\
        &=\Tr(X^\dagger \ket{v}\bra{\Omega})-\Tr(X^\dagger \ket{\Omega}\bra{v^*})\\
        &=\Tr(X^\dagger \ket{v}\bra{\Omega})-\Tr((X^T)^\dagger \ket{v}\bra{\Omega})\\
        &=\langle \Omega, X^\dagger v \rangle - \langle \Omega, (X^T)^\dagger v \rangle\\
        &=\langle (X-X^T)\Omega, v\rangle.
    \end{align*}

    Since $(L_x A)^T=A^T L_x$ for all $x\in \C(V)$, we have
    \begin{align*}
        K^\dagger K(x)&=K^\dagger(L_x A-A L_x)=(L_{x}(A+A^T)-(A+A^T)L_x)\Omega\\
        &=(L_{(A+A^T)\Omega}-(A+A^T))x.
    \end{align*}
\end{proof}
Note that if $G$ is directed graph, then $\Delta=D-(A+A^T)$ and if $G$ is undirected, then $\Delta=2D-2A$. Moreover, the Laplacian is invariant under adding a loop to it.
\begin{example}\label{classical incidence matrix}
    Let us consider a mixed graph $G_3$, which simultaneously contains a loop, a directed edge, and an undirected edge - see Figure~\ref{fig:graph_mixed}.
    
    \begin{figure}[h!tbp]
        \centering
        \begin{tikzpicture}[
            every node/.style={circle, draw, minimum size=8mm, font=\bfseries},
            >={Stealth[scale=1.2]}
        ]
            \node (1) at (0, 2) {1};
            \node (2) at (-1.5, 0) {2};
            \node (3) at (1.5, 0) {3};
            
            \draw[thick, ->] (1) to [out=135, in=45, looseness=8] (1); 
            \draw[thick, ->] (1) -- (2); 
            \draw[thick] (2) -- (3); 
        \end{tikzpicture}
        \caption{Mixed graph $G_3$}
        \label{fig:graph_mixed}
    \end{figure}

    The adjacency matrix $A$ for the graph $G_3$ takes the following form:
    \[A = \left[\begin{array}{ccc}
        1 & 1 & 0 \\
        0 & 0 & 1 \\
        0 & 1 & 0 \\
    \end{array}\right].\]
     Definition \ref{loops-directed-undirected} yields $|E_l|=|E_d|=|E_u|=1$ which is a obviously correct. 
    The incidence map $\widetilde{K}_3$ for this  graph is computed as follows
    \begin{align*}
        \widetilde{K}_3(x) &= L_x A - A L_x \\
        &= \left[\begin{array}{ccc}
            x_1 & 0 & 0 \\
            0 & x_2 & 0 \\
            0 & 0 & x_3 \\
        \end{array}\right] \left[\begin{array}{ccc}
            1 & 1 & 0 \\
            0 & 0 & 1 \\
            0 & 1 & 0 \\
        \end{array}\right] - \left[\begin{array}{ccc}
            1 & 1 & 0 \\
            0 & 0 & 1 \\
            0 & 1 & 0 \\
        \end{array}\right] \left[\begin{array}{ccc}
            x_1 & 0 & 0 \\
            0 & x_2 & 0 \\
            0 & 0 & x_3 \\
        \end{array}\right] \\
        &= \left[\begin{array}{ccc}
            0 & x_1 - x_2 & 0 \\
            0 & 0 & x_2 - x_3 \\
            0 & x_3 - x_2 & 0 \\
        \end{array}\right].
    \end{align*}
    Expressed in the standard matrix basis, this yields:
    \[\widetilde{K}_3(x) = (x_1 - x_2)E_{12} + (x_2 - x_3)E_{23} + (x_3 - x_2)E_{32}.\]
    Finally, we compute the Laplacian $\Delta$ of the graph $G_3$ using the formula $\Delta = L_{(A+A^T)\Omega} - (A+A^T)$. First, we find the symmetric part $A + A^T$:
    \[A + A^T = \left[\begin{array}{ccc}
        1 & 1 & 0 \\
        0 & 0 & 1 \\
        0 & 1 & 0 \\
    \end{array}\right] + \left[\begin{array}{ccc}
        1 & 0 & 0 \\
        1 & 0 & 1 \\
        0 & 1 & 0 \\
    \end{array}\right] = \left[\begin{array}{ccc}
        2 & 1 & 0 \\
        1 & 0 & 2 \\
        0 & 2 & 0 \\
    \end{array}\right].\]
    Next, we apply it to the vector $\Omega = [1, 1, 1]^T$ to obtain the diagonal entries of $L_{(A+A^T)\Omega}$:
    \[(A + A^T)\Omega = \left[\begin{array}{ccc}
        2 & 1 & 0 \\
        1 & 0 & 2 \\
        0 & 2 & 0 \\
    \end{array}\right] \left[\begin{array}{c} 1 \\ 1 \\ 1 \end{array}\right] = \left[\begin{array}{c} 3 \\ 3 \\ 2 \end{array}\right] \implies L_{(A+A^T)\Omega} = \left[\begin{array}{ccc}
        3 & 0 & 0 \\
        0 & 3 & 0 \\
        0 & 0 & 2 \\
    \end{array}\right].\]
    Subtracting $A + A^T$ from this diagonal matrix yields the Laplacian:
    \[\Delta = \left[\begin{array}{ccc}
        3 & 0 & 0 \\
        0 & 3 & 0 \\
        0 & 0 & 2 \\
    \end{array}\right] - \left[\begin{array}{ccc}
        2 & 1 & 0 \\
        1 & 0 & 2 \\
        0 & 2 & 0 \\
    \end{array}\right] = \left[\begin{array}{ccc}
        1 & -1 & 0 \\
        -1 & 3 & -2 \\
        0 & -2 & 2 \\
    \end{array}\right].\]
    Again, we observe that the Laplacian is symmetric and its rows (and columns) sum to zero, conforming to the standard properties of graph Laplacians, regardless of the loop present in the original adjacency matrix. Moreover, off-diagonal elements of the Laplace matrix are 0 if corresponding vertices are not connected, $-1$ if they are connected by a directed edge, and $-2$ if they are connected by an undirected edge. Incidentally, let us observe that the Laplace matrix is not sensitive to the orientation of directed edges.
\end{example}

\section{Quantum Graph Theory}
\label{sec:quantum}

The transition from classical to quantum mathematics relies on replacing commutative algebras by non-commutative ones. The foundational justification for this shift is the celebrated Gelfand--Naimark theorem, see e.g. \cite[Thm 11.18]{rudin1991}:

\begin{theorem}[Gelfand--Naimark]
    The category of compact Hausdorff spaces $\mathcal{CS}$ is contravariantly equivalent to the category of commutative unital $\cst$-algebras $\mathcal{CC}^*$ (i.e., $\mathcal{CS} \simeq \mathcal{CC}^*{}^{\textrm{\op}}$).
\end{theorem}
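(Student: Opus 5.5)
The plan is the standard proof of Gelfand--Naimark duality, organized around the two functors. On objects, $\C$ sends a compact Hausdorff space $X$ to $\C(X)$, the commutative unital $\cst$-algebra of continuous complex functions with the sup norm. In the other direction, a commutative unital $\cst$-algebra $\mathcal{A}$ goes to its Gelfand spectrum $\widehat{\mathcal{A}}$. This is the set of nonzero multiplicative linear functionals (characters), equipped with the weak$^*$ topology.

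On morphisms, a continuous map $f\colon X\to Y$ is sent to the unital $*$-homomorphism $f^*\colon \C(Y)\to\C(X)$, $g\mapsto g\circ f$. A unital $*$-homomorphism $\varphi\colon\mathcal{A}\to\mathcal{B}$ is sent to $\widehat\varphi\colon\widehat{\mathcal{B}}\to\widehat{\mathcal{A}}$, $\chi\mapsto\chi\circ\varphi$. Functoriality and contravariance are immediate from these formulas.

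First I would check that $\widehat{\mathcal{A}}$ is compact Hausdorff. Every character has norm $1$, so $\widehat{\mathcal{A}}$ sits in the unit ball of $\mathcal{A}^*$. It is weak$^*$ closed, because multiplicativity and $\chi(1)=1$ are pointwise conditions. Banach--Alaoglu then gives compactness, and the weak$^*$ topology is Hausdorff.

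Next I would construct the two natural isomorphisms. For spaces, let $\varepsilon_X\colon X\to\widehat{\C(X)}$ be $x\mapsto \mathrm{ev}_x$. It is continuous by definition of the weak$^*$ topology and injective by Urysohn's lemma. For surjectivity, take a character $\chi$ with kernel $\ker\chi$. If $\ker\chi$ had no common zero, compactness would give finitely many $g_i\in\ker\chi$ with $\sum|g_i|^2$ invertible and lying in $\ker\chi$, a contradiction. Hence there is a point $x$ with $\ker\chi\subseteq\ker\mathrm{ev}_x$, and since both kernels are maximal ideals, $\chi=\mathrm{ev}_x$. A continuous bijection from a compact space to a Hausdorff space is a homeomorphism.

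For algebras, let $\Gamma_{\mathcal{A}}\colon\mathcal{A}\to\C(\widehat{\mathcal{A}})$ be the Gelfand transform $a\mapsto\hat a$, $\hat a(\chi)=\chi(a)$. It is a unital homomorphism. Naturality of $\varepsilon$ and $\Gamma$ is a direct unwinding of the definitions.

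The main obstacle is proving that $\Gamma_{\mathcal{A}}$ is an isometric $*$-isomorphism. The steps, in order:

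1. \emph{Range equals spectrum.} Using Banach-algebra theory (Gelfand--Mazur applied to $\mathcal{A}/\mathfrak m$ for maximal ideals $\mathfrak m$), show that the range of $\hat a$ is exactly the spectrum $\sigma(a)$. Hence $\|\hat a\|_\infty$ equals the spectral radius $r(a)$.
2. \emph{Real on self-adjoints.} For self-adjoint $a$, show $\chi(a)\in\RR$ by applying $\chi$ to $e^{ita}$, which is unitary, so $|\chi(e^{ita})|\le1$ for all real $t$. Decomposing a general element into self-adjoint parts then shows $\Gamma$ is a $*$-map.
3. \emph{Isometry.} Use the $\cst$-identity: for self-adjoint $a$, $\|a^{2^n}\|=\|a\|^{2^n}$, so $r(a)=\|a\|$ by the spectral radius formula. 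For general $a$, $\|a\|^2=\|a^*a\|=r(a^*a)=\|\widehat{a^*a}\|_\infty=\|\hat a\|_\infty^2$.
4. \emph{Surjectivity.} The image of $\Gamma$ is closed because $\Gamma$ is isometric. It is a self-adjoint subalgebra that contains constants and separates points of $\widehat{\mathcal{A}}$. Stone--Weierstrass then shows it is all of $\C(\widehat{\mathcal{A}})$.

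With both natural isomorphisms in hand, the two functors form a contravariant equivalence $\mathcal{CS}\simeq\mathcal{CC}^*{}^{\op}$.
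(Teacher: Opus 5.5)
The paper gives no proof of this statement; it quotes it from Rudin's \emph{Functional Analysis} (Thm.~11.18). Your proposal is correct and is the standard proof: it matches the cited argument in its key steps (Gelfand--Mazur to identify the range of $\hat a$ with $\sigma(a)$, reality of characters on self-adjoint elements, the \cst-identity with the spectral radius formula for isometry, Stone--Weierstrass for surjectivity), and it also supplies what the categorical formulation needs beyond Rudin's theorem, namely the homeomorphism $X\cong\widehat{\C(X)}$ and the naturality checks.
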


The commutative $\cst$-algebra associated with a space $X$ is denoted by $\C(X)$. Motivated by this equivalence, we can drop with the commutativity requirement and thereby broaden our geometric perspective. This leads to the following terminology:

\begin{definition}
    The category of quantum spaces $\mathcal{QS}$ is the opposite category to the category of $\cst$-algebras, i.e., $\mathcal{QS} := \mathcal{C}^*{}^{\textrm{\op}}$.
\end{definition}

In general, the $\cst$-algebra associated with a quantum space $\XX$ will be denoted by $\C(\XX)$. 
In many cases, however, we will denote $\cst$-algebras simply by $\mathcal{M}, \mathcal{N}$, without explicit reference to the corresponding space.

To perform meaningful analysis, we need a Hilbert space structure on our algebra. For the purposes of quantum graphs, it suffices to assume that all $\cst$-algebras are finite-dimensional, thereby placing them within the von Neumann algebraic framework.  
\begin{definition}
Let $\mathcal{M}$ be a finite-dimensional $\cst$-algebra equipped with a faithful and positive functional $\Psi:\mathcal{M}\to \mathbb{C}$. The vector space $\mathcal{M}$, endowed with the scalar product
$$\langle m_1, m_2 \rangle_\Psi := \Psi(m_1^* m_2),$$
is a finite-dimensional Hilbert space, called the GNS space of $(\mathcal{M}, \Psi)$, and denoted by $L^2(\mathcal{M}, \Psi)$.
We will sometimes omit the subscript $\Psi$ in the scalar product when it is clear from the context. Furthermore, we will occasionally identify $\mathcal{M}$ with $L^2(\mathcal{M}, \Psi)$ without explicit mention. If $\mathcal{M}\subset B(H)$ for some Hilbert space, then its commutant will be denoted by  $\mathcal{M}'$. 
\end{definition}

\begin{theorem}[Tomita--Takesaki for finite-dimensional algebras]
    Let $\mathcal{M}$ be a finite-dimensional $\cst$-algebra equipped with a faithful positive functional $\Psi$. There exists a unique one-parameter group of automorphisms $\sigma_z \in \mathrm{Aut}(\mathcal{M})$, defined for all complex parameters $z \in \mathbb{C}$, called the modular group of $\mathcal{M}$ with respect to $\Psi$. This group is uniquely characterized by the KMS (Kubo--Martin--Schwinger) condition:
    $$\Psi(ab) = \Psi(b\sigma_{-i}(a)), \quad \text{for all } a, b \in \mathcal{M}.$$
\end{theorem}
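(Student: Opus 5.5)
Since $\mathcal{M}$ is finite-dimensional, I will work with its concrete form: $\mathcal{M}\cong\bigoplus_{k}\Mat_{n_k}(\mathbb{C})$. A faithful positive functional then has the form $\Psi(m)=\sum_k \Tr(Q_k m_k)$, with density $Q=\bigoplus_k Q_k$ positive and invertible (faithfulness forces invertibility). For $z\in\mathbb{C}$ I define
$$\sigma_z(a):=Q^{iz}\,a\,Q^{-iz},$$
where complex powers of $Q$ come from functional calculus: $Q^{w}=\sum_\lambda \lambda^{w}P_\lambda$ over the spectral projections $P_\lambda$. Existence then comes down to three checks.

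First, each $\sigma_z$ is an algebra automorphism, being conjugation by an invertible element of $\mathcal{M}$. The group law $\sigma_{z+w}=\sigma_z\sigma_w$ with $\sigma_0=\id$ follows from $Q^{w_1}Q^{w_2}=Q^{w_1+w_2}$. For real $t$, $Q^{it}$ is unitary, so $\sigma_t$ is a $*$-automorphism; for non-real $z$ one has $\sigma_z(a)^*=\sigma_{\bar z}(a^*)$. Second, the KMS condition holds, since $\sigma_{-i}(a)=QaQ^{-1}$ and by cyclicity of the trace
$$\Psi(b\,\sigma_{-i}(a))=\Tr(Q\,b\,Q a Q^{-1})=\Tr(Q a\, Q^{-1} Q b)=\Tr(Qab)=\Psi(ab).$$

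For uniqueness, the KMS identity alone pins down only $\sigma_{-i}$. Suppose $\theta$ is any linear map with $\Psi(ab)=\Psi(b\,\theta(a))$ for all $a,b$. Then $\Psi(b(\theta(a)-QaQ^{-1}))=0$ for all $b$. Taking $b=(\theta(a)-QaQ^{-1})^*$ and using faithfulness gives $\theta(a)=QaQ^{-1}$. So $\sigma_{-i}=\mathrm{Ad}(Q)$ is forced.

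Uniqueness of the whole family is the main obstacle. The one-parameter group must be read as an entire (or at least continuous) group $\sigma_z$ with $\sigma_{-i}$ fixed as above, with the $\sigma_t$ for real $t$ being $*$-automorphisms; without such a regularity requirement, uniqueness fails. I would argue as follows. Since $\Delta:=\sigma_{-i}=\mathrm{Ad}(Q)$ is a positive operator on $L^2(\mathcal{M},\Psi)$, one checks $\langle a,\Delta a\rangle_\Psi=\Tr(Q^{1/2}a^*Q^{1/2}\,Q^{1/2}aQ^{-1/2}\cdots)\ge0$, or alternatively notes that $\Delta$ has spectrum $\{\lambda\mu^{-1}\}$ and is self-adjoint for $\langle\cdot,\cdot\rangle_\Psi$. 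A continuous one-parameter group $\sigma_z=e^{zH}$ of linear maps is determined by its generator $H$, and $e^{-iH}=\Delta$ together with the requirement that $\sigma_t$ be $\Psi$-preserving, hence unitary on $L^2$, for real $t$ forces $iH$ to be self-adjoint. Among self-adjoint logarithms the choice $iH=\log\Delta$ is unique, which gives $\sigma_z=\Delta^{iz}=\mathrm{Ad}(Q^{iz})$. I expect the delicate part to be justifying the $\Psi$-invariance: one shows $\Psi\circ\sigma_t=\Psi$ from the KMS condition with $b=1$, which gives $\Psi\circ\sigma_{-i}=\Psi$, and then uses the analytic continuation. This is where the standing conventions of the statement must be made precise.
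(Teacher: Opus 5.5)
The paper gives no proof of this statement. It is quoted as standard background, and the Remark immediately after it records exactly your existence construction, $\Psi=\Tr(\rho\,\cdot)$ and $\sigma_z(a)=\rho^{iz}a\rho^{-iz}$. Your existence part and your KMS verification are correct. So is your observation that the displayed KMS identity only forces $\sigma_{-i}=\mathrm{Ad}(Q)$, which means that uniqueness, read literally, needs additional hypotheses.

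Your uniqueness sketch goes beyond the paper. It has a gap at exactly the step you flag, and one of your proposed hypotheses is too weak.

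\emph{Continuity is not enough.} Take $\Psi=\Tr$ on $\Mat_2$ and a non-scalar self-adjoint $k$. The maps $\sigma_{x+iy}:=\mathrm{Ad}(e^{ixk})$ form a continuous group of $\Psi$-preserving $*$-automorphisms with $\sigma_{-i}=\id$, so KMS holds, yet $\sigma_1\neq\id$. You need $z\mapsto\sigma_z$ to be holomorphic, i.e.\ $\sigma_z$ is the analytic extension of a continuous real group of $*$-automorphisms. Writing $\sigma_z=e^{zH}$ silently assumes this.

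\emph{The $\Psi$-invariance step does not follow as stated.} $\Psi\circ\sigma_{-i}=\Psi$ plus analytic continuation does not yield $\Psi\circ\sigma_t=\Psi$. It only makes $f(z)=\Psi(\sigma_z(a))$ an $i$-periodic entire function, and $e^{2\pi z}$ is such a function without being constant. The missing ingredient is boundedness. Since $*$-automorphisms of a \cst-algebra are isometric, $|f(t+is)|\le\|\Psi\|\,\|\sigma_{is}(a)\|$ is bounded on the strip $t\in\RR$, $0\le s\le 1$. By periodicity $f$ is then bounded on $\CC$, and Liouville makes it constant.

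\emph{A shorter route.} You can bypass $\Psi$-invariance and $L^2$-unitarity altogether. Boundedness of $\{e^{tH}\}_{t\in\RR}$ forces $H$ to be diagonalizable with purely imaginary spectrum. Hence $-iH$ is diagonalizable with real spectrum and satisfies $e^{-iH}=\Delta$. Every eigenvector of $-iH$ with eigenvalue $\mu$ is then an eigenvector of $\Delta$ with eigenvalue $e^{\mu}$. Therefore $-iH=\log\Delta$ and $\sigma_z=\Delta^{iz}=\mathrm{Ad}(Q^{iz})$.

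Two small slips. First, from $e^{-iH}=\Delta$ one gets $iH=-\log\Delta$, not $iH=\log\Delta$; the latter would give $\sigma_{-i}=\Delta^{-1}$. Your final formula $\sigma_z=\Delta^{iz}$ is nevertheless the right one. Second, the positivity computation should read $\langle a,\Delta a\rangle_\Psi=\Tr(Qa^*QaQ^{-1})=\Tr(a^*Qa)\ge0$.
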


\begin{remark}
    In the finite-dimensional case, any faithful positive functional $\Psi$ on $\mathcal{M}$ can be uniquely represented by a strictly positive, invertible element $\rho \in \mathcal{M}$ (a density matrix) such that:
    $$\Psi(a) = \mathrm{Tr}(\rho a), \quad \text{for all } a \in \mathcal{M}.$$
    This representation follows directly from the Riesz representation theorem applied to the space $L^2(\mathcal{M}, \mathrm{Tr})$. Under this identification, the modular group is given by the explicit formula:
    $$\sigma_z(a) = \rho^{iz} a \rho^{-iz}, \quad \forall a \in \mathcal{M}\; z \in \mathbb{C}.$$

\end{remark}

With the established Hilbert space structure, we can view the algebra's multiplication not merely as an internal operation, but as a linear operator acting on tensor products of Hilbert spaces $L^2(\mathcal{M}, \Psi)$.

\begin{definition}
    Given a $(\mathcal{M}, \Psi)$, where $\mathcal{M}$ is a finite-dimensional $\cst$-algebra and $\Psi$ is a faithful positive functional on $\mathcal{M}$, we define a linear map $m : L^2(\mathcal{M}, \Psi) \otimes L^2(\mathcal{M}, \Psi) \to L^2(\mathcal{M}, \Psi)$ by
    $$m(a\otimes b) := ab, \quad \forall a, b \in \mathcal{M},$$
    and denote by $m^\dagger$ its adjoint, i.e., the unique operator satisfying
    $$\langle v, m(w \otimes u) \rangle_\Psi = \langle m^\dagger(v), w \otimes u \rangle_{\Psi\otimes \Psi} \quad \forall u,v,w \in L^2(\mathcal{M}, \Psi).$$
    Here we use the identification $\mathcal{M} \simeq L^2(\mathcal{M}, \Psi)$.
\end{definition}

A mathematically well-behaved quantum space requires strict compatibility between its algebraic product and its inner product. This geometric feature is captured by the notion of a $\delta$-form.

\begin{definition}
    A positive linear functional $\Psi : \mathcal{M} \to \mathbb{C}$ on a $\cst$-algebra $\mathcal{M}$ is called a $\delta$-form if $m m^\dagger = \delta^2 \mathrm{id}$.
\end{definition}
We remark that in the classical setting it corresponds to a uniform measure on $\C(V)$.
\begin{definition}
    The pair $(\mathcal{M}, \Psi)$ is called a quantum system if $\mathcal{M}$ is a $\cst$-algebra and $\Psi$ is a faithful $\delta$-form. If additionally $\delta=1$, we call the pair $(\mathcal{M}, \Psi)$ a unit quantum system.
\end{definition}

We now have all the necessary ingredients to define the main object of our study. A quantum graph is essentially a quantum system equipped with an adjacency operator that respects both the real structure and the underlying algebraic multiplication.

\begin{definition} \label{q-graph:definition}
    A \emph{quantum graph} is a triple $G = (\mathcal{M}, \Psi, A)$, where:
    \begin{itemize}
        \item $(\mathcal{M}, \Psi)$ is a quantum system with some scaling parameter $\delta$,
        \item $A \in B(L^2(\mathcal{M}, \Psi))$ is an operator satisfying:
        \begin{equation}
            m (A \otimes A) m^\dagger = \delta^2 A, \label{schure:invariance}
        \end{equation}
        \begin{equation}
            A(x)^* = A(x^*) \quad \forall x \in L^2(\mathcal{M},\Psi), \label{A:real}
        \end{equation}
        and
        \begin{equation}
            A^\dagger(x)^*=A^\dagger(x^*) \quad \forall x \in L^2(\mathcal{M},\Psi). \label{A:dagger:real}
        \end{equation}
    \end{itemize}
    The operator $A$ is called the \emph{adjacency operator}. Moreover, if $(\mathcal{M}, \Psi)$ is a unit quantum system, then $G$ is called a unit quantum graph.
\end{definition}

\begin{definition}
    If the operator $A$ satisfies condition \eqref{A:real} (resp. if $A^\dagger$ satisfies condition \eqref{A:dagger:real}), then $A$ (resp. $A^\dagger$) is called $*$-preserving. If $A$ satisfies both \eqref{A:real} and \eqref{A:dagger:real}, it is called a \emph{real operator}.
\end{definition}

\begin{remark}
     We will show in Theorem~\ref{real-commuting:equivalence} that a $*$-preserving operator $A$ is real if and only if it commutes with the modular automorphism group $\sigma_t$. Hence, condition \eqref{A:dagger:real} can be equivalently replaced with the commutation relation $[A, \sigma_t] = 0$ for all $t \in \mathbb{R}$.
\end{remark}

Just like for classical graphs, we can impose algebraic conditions on the adjacency operator to capture geometric properties in the quantum realm.

\begin{definition}
    Define the map $\eta : \mathbb{C} \ni \lambda \mapsto \lambda \I \in \mathcal{M}$. A graph $G = (\mathcal{M}, \Psi, A)$ is said to be \emph{undirected} if it satisfies
    \begin{equation}
        (\mathrm{id} \otimes \eta^\dagger m)(\mathrm{id} \otimes A \otimes \mathrm{id})(m^\dagger \eta \otimes \mathrm{id}) = A. \label{transposition:invariance}
    \end{equation}
    Furthermore, $G$ is called \emph{reflexive} (respectively, \emph{irreflexive}) if
    \begin{equation}
        \delta^{-2}m(A \otimes \mathrm{id})m^\dagger = \mathrm{id}, \quad \text{(or } \delta^{-2}m(A \otimes \mathrm{id})m^\dagger = 0 \text{, respectively)}. \label{loops:noloops}
    \end{equation}
\end{definition}

\begin{remark}
    Some authors define an undirected graph by requiring $A$ to be self-adjoint. It is well known that these definitions are equivalent -- we will discuss this equivalence in more detail in Theorem~\ref{undirected_equivalence}. 
\end{remark}

The adjacency operator naturally induces a specific subspace of operators that corresponds to the ``edges'' of the underlying quantum graph.

\begin{definition}\label{projPA}
    The spaces
    $$
    \begin{aligned}
&S_L(G)=\operatorname{span}\left\{\delta^{-2}m(A\otimes X)m^\dagger \mid X\in B(L^2(\mathcal{M}, \Psi))\right\}\subset B(L^2(\mathcal{M}, \Psi))\\
&S_R(G)=\operatorname{span}\left\{\delta^{-2}m(X\otimes A)m^\dagger \mid X\in B(L^2(\mathcal{M}, \Psi))\right\}\subset B(L^2(\mathcal{M}, \Psi))
    \end{aligned}$$ are called the left and right quantum space of edges, respectively. 
\end{definition}

One readily verifies that if $A$ satisfies \eqref{schure:invariance}, then the maps
\[
\begin{aligned}
&P_A: B(\mathcal{M}) \ni X \mapsto \delta^{-2}m(A\otimes X)m^\dagger \in S_L(G)\\
&P_A': B(\mathcal{M}) \ni X \mapsto \delta^{-2}m(X\otimes A)m^\dagger \in S_R(G)
\end{aligned}
\]
are projections. However, they are not orthogonal with respect to the Hilbert--Schmidt inner product (see Theorem~\ref{not_orthogonal}). We observe that $P_A$ and $P_A'$ are orthogonal with respect to different inner products, introduced in Definition~\ref{R_L_scalar_product} and characterized in Theorem~\ref{orthogonal_P_A}. 

It is instructive to verify how standard classical graphs fit in this framework as a special case.

\begin{example}
    Let $G = (\C([n]), \Psi, A)$ be a classical graph, where:
    \begin{itemize}
        \item $\Psi(x) = \frac{1}{n}\sum_{i \in [n]} x_i$,
        \item $A \in \operatorname{Mat}_n(\mathbb{C})$ is an adjacency matrix.
    \end{itemize}
 A straightforward calculation  yields $m^\dagger(e_i) = n\, e_i \otimes e_i$, where $e_i$ denotes the canonical basis of $\C([n])$ and shows $\Psi$ is a $\sqrt{n}$-form. Furthermore, condition \eqref{schure:invariance} is equivalent to $A_{ij} \in \{0,1\}$ for all $i, j \in [n]$; condition \eqref{transposition:invariance} ensures symmetry $A = A^T$; and condition \eqref{loops:noloops} specifies that $A$ has ones on the diagonal (or zeros in the irreflexive case).
\end{example}

\begin{example}\label{complete-q-graph}
    For any quantum system $(\mathcal{M}, \Psi)$, the map $A_c:=\delta^2\Psi(\cdot)\I$ yields a quantum graph $(\mathcal{M},\Psi, A_c)$. Such a graph is called a complete quantum graph on $(\mathcal{M}, \Psi)$. Moreover, for any quantum system, the identity operator $A_l= \mathrm{id}$ is always an adjacency operator.
\end{example}

\section{The Schur product properties}
\label{sec:Schur}

In this chapter, we introduce fundamental properties and identities related to the Schur product and the operator $m^\dagger$. Many of these results have already been established in the more general setting in \cite{Daws_2024}. In particular, Theorem~\ref{schur_product} and Proposition~\ref{conjugate and transpose} are direct consequences of \cite[Proposition~5.3]{Daws_2024}.

To manipulate operators on the GNS space effectively, we frequently rely on basis expansions. The following technical lemma is a good example of such. 

\begin{lemma}
    For any orthonormal basis $(\xi_j)_{j\in J}$ of $L^2(\mathcal{M}, \Psi)$, the following identity holds:
    $$\sum_j \ket{\xi_j^*}\bra{\sigma_{-i}(\xi_j^*)}=\mathrm{id}=\sum_j \ket{\sigma_{-i}(\xi_j^*)}\bra{\xi_j^*}.$$
\end{lemma}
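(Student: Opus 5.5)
Let $\langle\cdot,\cdot\rangle$ denote $\langle\cdot,\cdot\rangle_\Psi$, conjugate-linear in the first slot, and read $\ket{x}\bra{y}$ as the operator $z\mapsto \langle y,z\rangle x$. The plan is to test both operator identities on an arbitrary vector $z\in L^2(\mathcal{M},\Psi)$ and reduce each one to the expansion $z=\sum_j\xi_j\langle\xi_j,z\rangle$, using two facts: the KMS condition, and the behaviour of the map $J_0:x\mapsto x^*$. The main step is to compute $\langle x^*,y^*\rangle$ in terms of $\langle y,x\rangle$, twisted by the modular group.

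\textbf{The twisted antiunitarity identity.} By the KMS condition $\Psi(ab)=\Psi(b\sigma_{-i}(a))$ we get
\[
\langle x^*,y^*\rangle=\Psi(xy^*)=\Psi\bigl(y^*\sigma_{-i}(x)\bigr)=\langle y,\sigma_{-i}(x)\rangle .
\]
Using $\sigma_{-i}(a)=\rho a\rho^{-1}$ and the fact that $\sigma_{-i}^{-1}=\sigma_{i}$, this also gives $\langle \sigma_{-i}(x^*),y^*\rangle=\langle y, x\rangle$ up to the correct placement of $\sigma$. I would confirm this placement directly: $\langle\sigma_{-i}(x^*),y^*\rangle=\Tr(\rho\,\rho^{-1}x\rho\, y^*)=\Tr(x\rho y^*)=\Tr(\rho y^* x)=\langle y,x\rangle$.

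\textbf{First identity.} Apply the operator to $z$:
\[
\sum_j \xi_j^*\,\langle\sigma_{-i}(\xi_j^*),z\rangle .
\]
Write $z=w^*$ with $w=z^*$. The computation above gives $\langle\sigma_{-i}(\xi_j^*),w^*\rangle=\langle w,\xi_j\rangle$. The sum therefore equals
\[
\sum_j\xi_j^*\,\overline{\langle\xi_j,w\rangle}=\Bigl(\sum_j\xi_j\langle\xi_j,w\rangle\Bigr)^*=w^*=z .
\]
This uses that $*$ is conjugate-linear.

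\textbf{Second identity.} Rather than redoing the computation, take adjoints of the first identity. Since $(\ket{x}\bra{y})^\dagger=\ket{y}\bra{x}$ and $\mathrm{id}^\dagger=\mathrm{id}$, we obtain
\[
\sum_j\ket{\sigma_{-i}(\xi_j^*)}\bra{\xi_j^*}=\mathrm{id}.
\]

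\textbf{Main obstacle.} The only delicate point is bookkeeping: deciding whether $\sigma_{-i}$ or $\sigma_i$ appears, and keeping the conjugate-linearity conventions consistent. I would settle both with the explicit density-matrix check $\Tr(\rho\,\rho^{-1}x\rho\, y^*)=\langle y,x\rangle$ carried out above. This matches the Remark's formula $\sigma_z(a)=\rho^{iz}a\rho^{-iz}$, which at $z=-i$ gives $\sigma_{-i}(a)=\rho a\rho^{-1}$.
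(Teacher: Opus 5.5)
Your proof is correct and follows essentially the same route as the paper: you evaluate the operator on an arbitrary vector, use the KMS-twisted relation $\langle\sigma_{-i}(\xi_j^*),z\rangle=\langle z^*,\xi_j\rangle$ and the conjugate-linearity of $*$ to recover $z$ from its basis expansion, and then get the second identity by taking adjoints, since $\mathrm{id}$ is self-adjoint. Your explicit density-matrix check of where $\sigma_{-i}$ sits is a harmless extra confirmation of a step the paper leaves implicit.
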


\begin{proof}
    For any $a\in L^2(\mathcal{M}, \Psi)$, we compute:
    \begin{align*}
        \sum_j \ket{\xi_j^*}\bra{\sigma_{-i}(\xi_j^*)}a
        &=\sum_j \langle a^*, \xi_j \rangle \xi_j^*
        =\left(\sum_{j} \langle\xi_j, a^* \rangle \xi_j\right)^*
        =(a^*)^*=a.
    \end{align*}
    The second identity follows from the self-adjointness of the operator $\mathrm{id}$.
\end{proof}

With this identity in hand, we can explicitly determine the action of $m^\dagger$. This result allows us to compute the co-multiplication explicitly.

\begin{lemma}\label{m-dagger-identities}
    Let $(\mathcal{M}, \Psi)$ be a quantum system with arbitrary $\delta$, and let $(\xi_j)_{j\in J}$ be any orthonormal basis of $L^2(\mathcal{M}, \Psi)$. Then the operator $m^\dagger:\mathcal{M}\to \mathcal{M}\otimes \mathcal{M}$ is given by the following formulas:
    $$m^\dagger(a)=\sum_j a \sigma_{-i}(\xi_j^*)\otimes \xi_j=\sum_j \xi_j \otimes \xi_j^*a \quad \forall a\in \mathcal{M}.$$
\end{lemma}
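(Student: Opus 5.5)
To prove Lemma~\ref{m-dagger-identities}, I will check each proposed formula against the defining property of the adjoint. For each formula I test its inner product with an arbitrary simple tensor $w\otimes u$ and show that the result equals $\langle a, wu\rangle_\Psi = \Psi(a^*wu)$. Since simple tensors span $L^2(\mathcal{M},\Psi)\otimes L^2(\mathcal{M},\Psi)$, this identifies $m^\dagger(a)$ uniquely. Throughout, the inner product on the tensor product is $\langle x\otimes y, w\otimes u\rangle_{\Psi\otimes\Psi}=\langle x,w\rangle_\Psi\langle y,u\rangle_\Psi$.

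\emph{Second formula.} We compute
\[
\Bigl\langle \sum_j \xi_j\otimes \xi_j^*a,\; w\otimes u\Bigr\rangle=\sum_j \langle \xi_j,w\rangle\,\Psi(a^*\xi_j u).
\]
The map $x\mapsto \Psi(a^*xu)$ is linear, so we may pull the sum inside:
\[
\sum_j \langle \xi_j,w\rangle\,\Psi(a^*\xi_j u)=\Psi\Bigl(a^*\Bigl(\sum_j\langle \xi_j,w\rangle\xi_j\Bigr)u\Bigr).
\]
By the orthonormal expansion $w=\sum_j\langle\xi_j,w\rangle\xi_j$, this equals $\Psi(a^*wu)$, as required. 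This direction uses no modular theory.

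\emph{First formula.} The same computation gives
\[
\sum_j \Psi\bigl(\sigma_{-i}(\xi_j^*)^*a^*w\bigr)\,\langle\xi_j,u\rangle.
\]
I will rewrite $\sigma_{-i}(\xi_j^*)^*$ using $\sigma_z(x)^*=\sigma_{\bar z}(x^*)$, which follows from $\sigma_z(x)=\rho^{iz}x\rho^{-iz}$. This gives $\sigma_{-i}(\xi_j^*)^*=\sigma_i(\xi_j)$. Next I apply the KMS condition $\Psi(xy)=\Psi(y\sigma_{-i}(x))$ with $x=\sigma_i(\xi_j)$ and $y=a^*w$. Since $\sigma_{-i}\sigma_i=\mathrm{id}$, this yields
\[
\Psi\bigl(\sigma_i(\xi_j)a^*w\bigr)=\Psi(a^*w\,\xi_j).
\]
Summing against $\langle \xi_j,u\rangle$ and using linearity and the expansion of $u$ gives $\Psi(a^*wu)$. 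Alternatively, the first formula can be derived from the second by inserting the preceding lemma's resolution of the identity $\mathrm{id}=\sum_j\ket{\sigma_{-i}(\xi_j^*)}\bra{\xi_j^*}$ on the appropriate tensor leg and relabeling. I will use the direct KMS computation because it is shorter.

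The only delicate step is the modular bookkeeping in the first formula: getting the sign of $\pm i$ right in $\sigma_{-i}(\xi_j^*)^*=\sigma_i(\xi_j)$, and applying KMS in the correct direction. I will double-check this against the trace representation $\Psi=\Tr(\rho\,\cdot)$. There, $\sigma_{-i}(x)=\rho x\rho^{-1}$, so
\[
\Psi\bigl(\sigma_i(\xi_j)a^*w\bigr)=\Tr\bigl(\rho\rho^{-1}\xi_j\rho\, a^*w\bigr)=\Tr(\xi_j\rho a^*w)=\Tr(\rho a^*w\xi_j)=\Psi(a^*w\xi_j),
\]
which confirms the KMS step.

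Finally, neither formula uses the $\delta$-form hypothesis. The result therefore holds for arbitrary $\delta$, consistent with the statement.
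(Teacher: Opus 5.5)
Your proof is correct and follows essentially the same route as the paper: you test each candidate against simple tensors through the defining relation $\langle a, wu\rangle_\Psi=\langle m^\dagger(a), w\otimes u\rangle_{\Psi\otimes\Psi}$, expand in the orthonormal basis, and use KMS once for the first formula. The only real difference is in the second formula, where you expand $w$ directly and need no modular theory, whereas the paper detours through two applications of KMS ($\langle bc,a\rangle=\langle b,a\sigma_{-i}(c^*)\rangle$ and then back); your version is slightly cleaner and makes it transparent that the $\delta$-form hypothesis is never used.
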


\begin{proof}
    Let $a,b,c\in\mathcal{M}$. Then:
    \begin{align*}
        \langle b\otimes c, m^\dagger a \rangle_{\Psi \otimes \Psi}
        &=\langle bc, a \rangle_\Psi\\
        &=\left\langle b\sum_j \ket{\xi_j}\bra{\xi_j} c, a\right\rangle_\Psi\\
        &=\sum_j\langle c, \xi_j \rangle_\Psi \langle b\xi_j, a\rangle_\Psi\\
        &=\sum_{j}\langle b, a \sigma_{-i}(\xi_j^*)\rangle_\Psi \langle c, \xi_j \rangle_\Psi\\
        &=\left\langle b \otimes c, \sum_j a \sigma_{-i}(\xi_j^*) \otimes \xi_j\right\rangle_{\Psi \otimes \Psi},
    \end{align*}
    and similarly for the second identity:
    \begin{align*}
        \langle b\otimes c, m^\dagger a \rangle_{\Psi \otimes \Psi}
        &=\langle bc, a \rangle_\Psi\\
        &=\left\langle b, a\sigma_{-i}(c^*)\right\rangle_\Psi\\
        &=\left\langle b,\sum_j \ket{\xi_j}\bra{\xi_j}a\sigma_{-i}(c^*) \right\rangle_\Psi\\
        &=\sum_j \langle b, \xi_j \rangle_\Psi \langle\xi_j, a \sigma_{-i}(c^*)\rangle_\Psi\\
        &=\sum_j \langle b, \xi_j \rangle_\Psi \langle\xi_j c , a\rangle_\Psi\\
        &=\sum_j \langle b, \xi_j \rangle_\Psi \langle c, \xi_j^* a \rangle_\Psi\\
        &=\left \langle b\otimes c, \sum_j \xi_j \otimes \xi_j^* a\right \rangle_{\Psi \otimes \Psi}.
    \end{align*}
\end{proof}

A direct application of this explicit form reveals a fundamental compatibility between the modular group, the basis choice, and the spatial parameter $\delta$.

\begin{corollary} \label{xi_j_xi_j*}
    Let $(\mathcal{M}, \Psi)$ be a quantum system with some $\delta$. Then:
    $$\sum_j \sigma_{-i}(\xi_j^*)\xi_j=\sum_j \xi_j \xi_j^*=\delta^2 \I.$$
\end{corollary}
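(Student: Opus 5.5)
The plan is to apply $m$ to the two expressions for $m^\dagger(\I)$ from Lemma~\ref{m-dagger-identities}, and then use the $\delta$-form condition $mm^\dagger=\delta^2\,\mathrm{id}$.

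First I take $a=\I$ in Lemma~\ref{m-dagger-identities}. This gives
\[
m^\dagger(\I)=\sum_j \sigma_{-i}(\xi_j^*)\otimes \xi_j=\sum_j \xi_j\otimes \xi_j^*.
\]
Applying $m$ to each expression yields
\[
mm^\dagger(\I)=\sum_j \sigma_{-i}(\xi_j^*)\xi_j=\sum_j \xi_j\xi_j^*.
\]
Since $(\mathcal{M},\Psi)$ is a quantum system, $\Psi$ is a $\delta$-form, so $mm^\dagger(\I)=\delta^2\I$. Both sums therefore equal $\delta^2\I$, which is exactly the claim.

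The only point needing care is that Lemma~\ref{m-dagger-identities} holds for an arbitrary orthonormal basis. Hence the corollary holds for every orthonormal basis $(\xi_j)$, and no compatibility between the basis and the modular group has to be checked. I also need that $m$ is simply multiplication on elementary tensors, so that $m(x\otimes y)=xy$ applies termwise to these finite sums.

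There is no real obstacle. The one thing to confirm is that the $\delta$-form identity is used as an operator identity evaluated at $\I$.
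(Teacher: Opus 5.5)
Your proposal is correct and is essentially the paper's own argument: take $a=\I$ in the two formulas for $m^\dagger$ from Lemma~\ref{m-dagger-identities}, apply $m$, and use $mm^\dagger=\delta^2\,\mathrm{id}$. Your remark that the identity therefore holds for every orthonormal basis $(\xi_j)$ also matches how the lemma is stated.
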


\begin{proof}
    The statement follows from Lemma \ref{m-dagger-identities} and the identity $m m^\dagger=\delta^2 \mathrm{id}$.
\end{proof}

These algebraic tools immediately yield structural results for quantum graphs. For instance, just as we can construct the complement of a classical graph by swapping edges and non-edges, we can define the complement of a quantum graph algebraically.

\begin{lemma}\label{graph_complement}
    Let $(\mathcal{M}, \Psi, A)$ be a quantum graph with some $\delta$ and $(\mathcal{M}, \Psi, A_c)$ be the complete quantum graph from example \ref{complete-q-graph} . Then $(\mathcal{M}, \Psi, \widetilde{A}=A_c-A)$ is another quantum graph.
\end{lemma}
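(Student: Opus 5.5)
We verify the three defining conditions of Definition~\ref{q-graph:definition} for $\widetilde{A}=A_c-A$, using bilinearity of the Schur-type product $(X,Y)\mapsto \delta^{-2}m(X\otimes Y)m^\dagger$. Expanding,
\[
m(\widetilde A\otimes\widetilde A)m^\dagger = m(A_c\otimes A_c)m^\dagger - m(A_c\otimes A)m^\dagger - m(A\otimes A_c)m^\dagger + m(A\otimes A)m^\dagger .
\]
The first term equals $\delta^2 A_c$ because $A_c$ is itself a quantum graph (Example~\ref{complete-q-graph}), and the last equals $\delta^2 A$ by \eqref{schure:invariance}. It therefore suffices to show that $A_c$ is a unit for this product:
\[
m(A_c\otimes X)m^\dagger=\delta^2 X=m(X\otimes A_c)m^\dagger
\]
for every $X\in B(L^2(\mathcal{M},\Psi))$. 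Granting this, the sum is $\delta^2A_c-2\delta^2A+\delta^2A=\delta^2\widetilde A$, as required.

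The unit property is the main computation. We use Lemma~\ref{m-dagger-identities} in the form $m^\dagger(a)=\sum_j \xi_j\otimes\xi_j^*a$. Then
\[
m(A_c\otimes X)m^\dagger(a)=\delta^2\sum_j \Psi(\xi_j)X(\xi_j^*a).
\]
Since $\Psi(\xi_j)=\langle \I,\xi_j\rangle_\Psi$, we have $\sum_j \Psi(\xi_j)\xi_j^*=\bigl(\sum_j\langle\xi_j,\I\rangle\xi_j\bigr)^*=\I$, and by linearity of $X$ the expression collapses to $\delta^2X(a)$. For the right-sided version we use the other formula, $m^\dagger(a)=\sum_j a\sigma_{-i}(\xi_j^*)\otimes\xi_j$, which gives $\delta^2\sum_j X(a\sigma_{-i}(\xi_j^*))\Psi(\xi_j)$. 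Here we need $\sum_j\Psi(\xi_j)\sigma_{-i}(\xi_j^*)=\sigma_{-i}(\I)=\I$, which follows by applying $\sigma_{-i}$ (linear, fixing $\I$) to the previous identity. This handles both mixed terms. The only care needed is the conjugate-linearity in the inner product, so we check that $\sum_j \overline{\langle \xi_j,\I\rangle}\,\xi_j^*$ really reconstructs $\I^*=\I$.

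The reality conditions are immediate from linearity once they hold for $A_c$, since \eqref{A:real} and \eqref{A:dagger:real} are preserved under real linear combinations. For $A_c$, we have $A_c(x)^*=\delta^2\overline{\Psi(x)}\I=\delta^2\Psi(x^*)\I$ because $\Psi$ is positive and hence hermitian. Next, $A_c^\dagger=\delta^2\ket{\I}\bra{\I}$ is self-adjoint, since $A_c=\delta^2\ket{\I}\bra{\I}$ and $\Psi(x)=\langle\I,x\rangle_\Psi$. So $A_c^\dagger=A_c$ is $*$-preserving as well, and $\widetilde A$ satisfies all the conditions of Definition~\ref{q-graph:definition}.
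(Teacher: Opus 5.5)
Your proof is correct and follows the same route as the paper: expand $(A_c-A)\cdot_S(A_c-A)$ bilinearly and use that $\delta^{-2}A_c=\ket{\I}\bra{\I}$ is the unit of the Schur product. You go beyond the paper in two places. You verify the unit property explicitly, where the paper only calls it straightforward; it also follows at once from Theorem~\ref{schur_product}. You also check the reality conditions \eqref{A:real} and \eqref{A:dagger:real}, which the paper's proof leaves unchecked.
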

\begin{proof}
    It is straightforward that $\delta^{-2}A_c$ is the identity of the Schur product and hence: 
    \begin{align*}
        \widetilde{A}\cdot_S \widetilde{A}&=(A_c-A)\cdot_S (A_c-A)\\
        &=A_c \cdot_S A_c-A_c \cdot_S A-A\cdot_S A_c+A\cdot_S A\\
        &=\delta^2A_c-2\delta^2A+\delta^2A=\delta^2 \widetilde{A}.
    \end{align*}
\end{proof}

 By conjugating the standard tensor product of rank-one operators with $m$ and $m^\dagger$, we obtain an operation that perfectly mimics the classical entry-wise matrix product, extended to the non-commutative setting.

\begin{theorem}\label{schur_product}
    Let $a_j, b_j, c_s, d_s \in \mathcal{M}$ for $j \in I$, $s \in S$. Then:
    $$m\left( \sum_{j} \ket{a_j}\bra{b_j} \otimes \sum_{s} \ket{c_s}\bra{d_s} \right) m^\dagger = \sum_{j,s} \ket{a_j c_s}\bra{b_j d_s}.$$
\end{theorem}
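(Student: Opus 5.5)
By bilinearity of both sides in the two operator arguments, it suffices to treat single rank-one operators. So I will prove
\[
m\bigl(\ket{a}\bra{b}\otimes\ket{c}\bra{d}\bigr)m^\dagger=\ket{ac}\bra{bd}
\]
for $a,b,c,d\in\mathcal{M}$. Here $\ket{a}\bra{b}$ means the operator $x\mapsto\langle b,x\rangle_\Psi\, a$ on $L^2(\mathcal{M},\Psi)$. The general statement then follows by summing over $j$ and $s$.

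Fix $x\in\mathcal{M}$. Since the tensor product of rank-one operators is again rank-one, we have $\ket{a}\bra{b}\otimes\ket{c}\bra{d}=\ket{a\otimes c}\bra{b\otimes d}$ on $L^2(\mathcal{M},\Psi)\otimes L^2(\mathcal{M},\Psi)$. Applying this to $m^\dagger x$ gives
\[
\langle b\otimes d, m^\dagger x\rangle_{\Psi\otimes\Psi}\; a\otimes c .
\]
By the defining property of the adjoint, $\langle b\otimes d, m^\dagger x\rangle_{\Psi\otimes\Psi}=\langle m(b\otimes d),x\rangle_\Psi=\langle bd,x\rangle_\Psi$. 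Applying $m$ then yields $\langle bd,x\rangle_\Psi\, ac=\ket{ac}\bra{bd}x$, which is the claim.

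This argument never uses the explicit formula for $m^\dagger$ from Lemma~\ref{m-dagger-identities}, the $\delta$-form property, or the modular group; only the definition of $m^\dagger$ enters. There is no real obstacle. The one point to get right is the convention that $\bra{b}$ is conjugate-linear in $b$ and taken with respect to $\langle\cdot,\cdot\rangle_\Psi$, respectively $\langle\cdot,\cdot\rangle_{\Psi\otimes\Psi}$. With that convention, $\langle b\otimes d,\cdot\rangle_{\Psi\otimes\Psi}$ is exactly the bra of $b\otimes d$, so the adjoint step is valid.
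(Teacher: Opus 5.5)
Your proof is correct and follows essentially the same route as the paper: reduce to rank-one operators by linearity, apply $\ket{a\otimes c}\bra{b\otimes d}$ to $m^\dagger x$, and use the defining adjoint relation to obtain $\langle bd,x\rangle_\Psi\, ac=\ket{ac}\bra{bd}x$. Your observation that only the definition of $m^\dagger$ is needed, with no use of the $\delta$-form property or the modular group, is accurate.
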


\begin{proof}
    Let $a, b, c, d \in \mathcal{M}$. Then:
    \begin{align*}
        m\left( \ket{a}\bra{b} \otimes \ket{c}\bra{d} \right) m^\dagger e
        &= m(a \otimes c) \langle b \otimes d, m^\dagger e \rangle \\
        &= ac \langle m(b \otimes d), e \rangle \\
        &= \ket{ac} \bra{bd} e,
    \end{align*}
    and the result follows by linearity.
\end{proof}

\begin{definition}
    The operation $M_1\cdot_S M_2:=m(M_1\otimes M_2)m^\dagger$ will be referred to as the Schur product.
\end{definition}

Naturally, this product inherits key properties from the underlying algebraic multiplication and the adjoint operation.

\begin{corollary}
    The Schur product is associative, and $(M_1\cdot_S M_2)^\dagger=M_1^\dagger\cdot_S M_2^\dagger$.
\end{corollary}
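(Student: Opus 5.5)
Both claims reduce to rank-one operators by linearity, after which Theorem~\ref{schur_product} does all the work. Since $\mathcal{M}$ is finite-dimensional, every operator in $B(L^2(\mathcal{M},\Psi))$ is a finite sum of rank-one operators $\ket{a}\bra{b}$. Both the Schur product and the adjoint behave predictably on sums: $\cdot_S$ is bilinear, and $\dagger$ is conjugate-linear and additive. It therefore suffices to check both identities on rank-one operators and extend by (conjugate-)linearity.

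\emph{Associativity.} Take $M_1=\ket{a}\bra{b}$, $M_2=\ket{c}\bra{d}$, $M_3=\ket{e}\bra{f}$ with $a,\dots,f\in\mathcal{M}$. Apply Theorem~\ref{schur_product} twice:
\[
(M_1\cdot_S M_2)\cdot_S M_3=\ket{ac}\bra{bd}\cdot_S\ket{e}\bra{f}=\ket{(ac)e}\bra{(bd)f}.
\]
Similarly,
\[
M_1\cdot_S(M_2\cdot_S M_3)=\ket{a(ce)}\bra{b(df)}.
\]
These coincide by associativity of the multiplication in $\mathcal{M}$. Trilinearity of both sides then gives the general case.

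\emph{Adjoint.} On rank-one operators,
\[
(\ket{a}\bra{b}\cdot_S\ket{c}\bra{d})^\dagger=(\ket{ac}\bra{bd})^\dagger=\ket{bd}\bra{ac}=\ket{b}\bra{a}\cdot_S\ket{d}\bra{c},
\]
where the last equality is again Theorem~\ref{schur_product}. The right-hand side is exactly $M_1^\dagger\cdot_S M_2^\dagger$. Both sides of the claimed identity are conjugate-linear in each argument, so the identity extends to sums.

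A more direct route also works: $(m(M_1\otimes M_2)m^\dagger)^\dagger=m(M_1^\dagger\otimes M_2^\dagger)m^\dagger$, because $(m^\dagger)^\dagger=m$ and adjoints on the Hilbert tensor product $L^2(\mathcal{M},\Psi)\otimes L^2(\mathcal{M},\Psi)$ factor tensorwise.

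There is no real obstacle here. The only point needing care is the bookkeeping of (conjugate-)linearity when passing from rank-one operators to general ones. This is harmless because both sides of each identity have the same linearity type in each slot.
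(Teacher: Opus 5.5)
Your proposal is correct and follows the paper's intended argument: the corollary is stated without proof as an immediate consequence of Theorem~\ref{schur_product}, and it is obtained exactly as you do. You reduce to rank-one operators, use $\ket{a}\bra{b}\cdot_S\ket{c}\bra{d}=\ket{ac}\bra{bd}$ together with associativity in $\mathcal{M}$ and $(\ket{x}\bra{y})^\dagger=\ket{y}\bra{x}$, and then extend by (conjugate-)linearity; your alternative one-line adjoint argument via $(m^\dagger)^\dagger=m$ and $(M_1\otimes M_2)^\dagger=M_1^\dagger\otimes M_2^\dagger$ is also valid.
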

Let us now introduce the notation related to the opposite algebra and the modular theory for them. 

\begin{definition}
The opposite algebra $\mathcal{M}^{\op}$ of a von Neumann algebra $\mathcal{M}$ has the same underlying vector space and involution, but the reversed product:
$$a \cdot_{\op} b := b a.$$
For any faithful $\delta$-form on $\mathcal{M}$, the linear functional $\Psi^\op$ on $\mathcal{M}^\op$ defined by $\Psi^\op(x^\op)=\Psi(x)$  is  a faithful $\delta$-form on $\mathcal{M}^{\op}$. In particular, this implies $\Psi^{\op}(a \cdot_{\op} b) = \Psi(ba)$. We denote the GNS Hilbert space associated with the pair $(\mathcal{M}^{\op}, \Psi^{\op})$ by $L^2(\mathcal{M}^{\op}, \Psi^{\op})$, and we implicitly identify $\mathcal{M}^{\op}$ with $L^2(\mathcal{M}^{\op}, \Psi^{\op})$.

\end{definition}

\begin{lemma}
    Let us consider the map $\iota : \mathcal{M} \to \mathcal{M}^{\op}$ given by $\iota(a)=\sigma_{i/2}(a)$. The map $\iota$ viewed as operator $L^2(\mathcal{M},\Psi)\to L^2(\mathcal{M}^\op,\Psi^\op)$ is unitary.
\end{lemma}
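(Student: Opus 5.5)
Since both spaces are finite-dimensional and $\iota=\sigma_{i/2}$ is a linear bijection (its inverse is $\sigma_{-i/2}$, because $(\sigma_z)$ is a one-parameter group of automorphisms), it suffices to show that $\iota$ is isometric:
$$\langle \iota(a),\iota(b)\rangle_{\Psi^\op}=\langle a,b\rangle_\Psi \qquad \text{for all } a,b\in\mathcal{M}.$$
Surjectivity then comes for free, since the underlying vector space of $\mathcal{M}^\op$ is that of $\mathcal{M}$.

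\textbf{Step 1: unwind the inner product on the opposite algebra.} The involution is unchanged and the product is reversed, so
$$\langle x,y\rangle_{\Psi^\op}=\Psi^\op(x^*\cdot_\op y)=\Psi(yx^*).$$
Hence the claim reduces to
$$\Psi\bigl(\sigma_{i/2}(b)\,\sigma_{i/2}(a)^*\bigr)=\Psi(a^*b).$$

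\textbf{Step 2: verify this identity using the density matrix.} By the Remark following Tomita--Takesaki, $\Psi=\Tr(\rho\,\cdot)$ and $\sigma_z(a)=\rho^{iz}a\rho^{-iz}$, so $\sigma_{i/2}(a)=\rho^{-1/2}a\rho^{1/2}$. Since $\rho$ is positive,
$$\sigma_{i/2}(a)^*=\rho^{1/2}a^*\rho^{-1/2},$$
which equals $\sigma_{-i/2}(a^*)$. Substituting gives
$$\Tr\bigl(\rho\,\rho^{-1/2}b\rho^{1/2}\rho^{1/2}a^*\rho^{-1/2}\bigr)=\Tr\bigl(\rho^{1/2}b\rho a^*\rho^{-1/2}\bigr)=\Tr(b\rho a^*)=\Tr(\rho a^*b)=\Psi(a^*b),$$
using cyclicity of the trace twice.

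Alternatively, Step 2 can be done abstractly via the KMS condition $\Psi(xy)=\Psi(y\sigma_{-i}(x))$ together with $\sigma_{i/2}(a)^*=\sigma_{-i/2}(a^*)$ and $\Psi\circ\sigma_z=\Psi$.

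\textbf{Main obstacle.} The only delicate point is bookkeeping. One must use the correct adjoint rule $\sigma_z(a)^*=\sigma_{\bar z}(a^*)$, which gives $\sigma_{-i/2}$ and not $\sigma_{i/2}$ for $z=i/2$. One must also put the factors in the order dictated by the reversed product in Step 1. With the sign convention $\sigma_z(a)=\rho^{iz}a\rho^{-iz}$ from the paper, the powers of $\rho$ cancel exactly, which confirms that $i/2$ (rather than $-i/2$) is the right choice.
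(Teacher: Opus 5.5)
Your proof is correct and follows the paper's argument: both use the reversed product and $\sigma_{i/2}(a)^*=\sigma_{-i/2}(a^*)$ to reduce the claim to $\Psi\bigl(\sigma_{i/2}(b)\sigma_{-i/2}(a^*)\bigr)=\Psi(a^*b)$. The paper closes this exactly by your ``alternative'' route ($\Psi\circ\sigma_{-i/2}=\Psi$ gives $\Psi(b\sigma_{-i}(a^*))$, then KMS); your density-matrix computation is the concrete form of that same step, and your remark on bijectivity spells out the surjectivity that the paper leaves implicit.
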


\begin{proof}
    We compute:
    \begin{align*}
        \langle\iota(a), \iota(b)\rangle_{\Psi^\op}
        &=\Psi^\op(\sigma_{-i/2}(a^*)\cdot_\op \sigma_{i/2}(b))
        =\Psi(\sigma_{i/2}(b)\sigma_{-i/2}(a^*))\\
        &=\Psi(b\sigma_{-i}(a^*))
        =\Psi(a^*b)
        =\langle a, b \rangle_\Psi.
    \end{align*}
\end{proof}

This unitary equivalence allows us to faithfully represent both the algebra and its opposite algebra on the same Hilbert space.

\begin{definition}
Let $(\mathcal{M}, \Psi)$ be a quantum system with some $\delta$. We define two $*$-homomorphisms
$$\pi_\Psi : \mathcal{M} \to B(L^2(\mathcal{M}, \Psi)), \qquad \pi_\Psi^{\op} : \mathcal{M}^{\op} \to B(L^2(\mathcal{M}, \Psi))$$
by
$$\pi_\Psi(a)b = a b, \qquad \pi_\Psi^{\op}(a)b = b\sigma_{-i/2}(a).$$
\end{definition}
Let us note that the maps $\pi$ and $\pi^\op$ are indeed $*$-homomorphisms:
    $$\langle a,\pi^\op(b)c\rangle_\Psi=\Psi(a^*c\sigma_{-i/2}(b))=\Psi(\sigma_{i/2}(b)a^*c)=\langle a\sigma_{-i/2}(b^*), c\rangle_\Psi=\langle \pi^\op(b^*)a, c \rangle_\Psi.$$
We conclude this section by tying together the Schur product and these representations. The following proposition provides powerful computational identities that allow us to translate tensor operations directly into left and right multiplication operators.

\begin{proposition} \label{LR_representation}
    Let $(\mathcal{M}, \Psi)$ be a quantum system. For any $B = \sum_s \ket{b_s} \bra{a_s}\in B(L^2(\mathcal{M}, \Psi))$, we have:
    \begin{itemize}
        \item[i)] $m(B \otimes \mathrm{id}) m^\dagger = \sum_s \pi_\Psi(b_s a_s^*)$,
        \item[ii)] $m(\mathrm{id} \otimes B) m^\dagger = \sum_s \pi_\Psi^\op(\sigma_{-i/2}(a_s^*)\sigma_{i/2}(b_s))$.
    \end{itemize}
\end{proposition}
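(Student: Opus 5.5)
By linearity in $B$, it suffices to treat a rank-one operator $B=\ket{b}\bra{a}$. We then compute $m(B\otimes \mathrm{id})m^\dagger$ and $m(\mathrm{id}\otimes B)m^\dagger$ on an arbitrary $x\in\mathcal{M}$, using the two explicit expressions for $m^\dagger$ from Lemma~\ref{m-dagger-identities}. In each case we choose whichever form keeps the coordinate we need to pair with $a$ in a convenient position.

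For i), we use $m^\dagger(x)=\sum_j \xi_j\otimes \xi_j^*x$. This gives
\[
m(B\otimes\mathrm{id})m^\dagger(x)=\sum_j \langle a,\xi_j\rangle_\Psi\, b\,\xi_j^*x
= b\Bigl(\sum_j \langle \xi_j,a\rangle_\Psi\,\xi_j\Bigr)^{*}x = b a^* x ,
\]
where we used that $\langle a,\xi_j\rangle_\Psi=\overline{\langle \xi_j,a\rangle_\Psi}$ and that the involution is antilinear. This is exactly $\pi_\Psi(ba^*)x$.

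For ii), we use $m^\dagger(x)=\sum_j x\sigma_{-i}(\xi_j^*)\otimes \xi_j$. This gives
\[
m(\mathrm{id}\otimes B)m^\dagger(x)=\sum_j \langle a,\xi_j\rangle_\Psi\, x\,\sigma_{-i}(\xi_j^*)\,b
= x\,\sigma_{-i}\Bigl(\bigl(\textstyle\sum_j\langle\xi_j,a\rangle_\Psi\xi_j\bigr)^*\Bigr)b = x\,\sigma_{-i}(a^*)\,b ,
\]
using the same antilinearity argument together with the linearity of $\sigma_{-i}$. It remains to match this with the right-hand side of ii). By definition, $\pi^\op_\Psi(y)x = x\sigma_{-i/2}(y)$. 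For $y=\sigma_{-i/2}(a^*)\sigma_{i/2}(b)$ we get
\[
\sigma_{-i/2}(y)=\sigma_{-i}(a^*)\,b,
\]
since $\sigma_{-i/2}$ is an algebra homomorphism and $\sigma_z\sigma_w=\sigma_{z+w}$. Hence the two sides agree.

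The main obstacle is bookkeeping rather than any new idea. One has to track the conjugate-linearity of the inner product in its first slot, which decides whether $\langle a,\xi_j\rangle$ or $\langle\xi_j,a\rangle$ appears. One also has to keep the signs of the imaginary modular parameters consistent with the conventions fixed in the definition of $\pi^\op_\Psi$. Finally, the product inside $\pi^\op_\Psi$ in ii) must be read as the ordinary product in $\mathcal{M}$, since the map $y\mapsto x\sigma_{-i/2}(y)$ reverses order and this is what makes $\pi^\op_\Psi$ a homomorphism on $\mathcal{M}^\op$.
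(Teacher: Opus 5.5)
Your proof is correct, and it takes a slightly different route from the paper's.

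\textbf{Your route.} You evaluate on a vector $x$ using the explicit formulas for $m^\dagger$ from Lemma~\ref{m-dagger-identities}. You collapse the basis sum via $\sum_j\langle a,\xi_j\rangle_\Psi\,\xi_j^*=a^*$. You then match the result with the definition of $\pi_\Psi^\op$ through $\sigma_{-i/2}\bigl(\sigma_{-i/2}(a^*)\sigma_{i/2}(b)\bigr)=\sigma_{-i}(a^*)b$.

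\textbf{The paper's route.} The paper stays at the operator level. It writes $\mathrm{id}=\sum_j\ket{\xi_j}\bra{\xi_j}$ and applies the rank-one Schur product formula (Theorem~\ref{schur_product}). This gives $\sum_j\ket{b\xi_j}\bra{a\xi_j}$ and $\sum_j\ket{\xi_j b}\bra{\xi_j a}$. It factors these as $\pi_\Psi(b)\ket{\xi_j}\bra{\xi_j}\pi_\Psi(a^*)$ and $\pi_\Psi^\op(\sigma_{i/2}(b))\ket{\xi_j}\bra{\xi_j}\pi_\Psi^\op(\sigma_{-i/2}(a^*))$. It then sums over $j$ and multiplies. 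Because $\pi_\Psi^\op$ is a homomorphism on $\mathcal{M}^\op$, the product reverses and yields the ordinary product $\sigma_{-i/2}(a^*)\sigma_{i/2}(b)$.

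\textbf{What each buys.} The paper never needs the explicit form of $m^\dagger$, only its defining adjoint relation. It does need the $*$-property of $\pi_\Psi^\op$ to write $\bra{\xi_j a}=\bra{\xi_j}\pi_\Psi^\op(\sigma_{-i/2}(a^*))$, and that property itself rests on the KMS condition. Your argument uses KMS only through the first formula for $m^\dagger$ in part ii). It needs nothing about $\pi_\Psi^\op$ beyond its definition. This makes the modular bookkeeping, and the fact that the product inside $\pi_\Psi^\op$ is the ordinary one, completely explicit.
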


\begin{proof}
    Let $(\xi_j)_{j \in I}$ be an orthonormal basis of $L^2(\mathcal{M}, \Psi)$. Then:
    \begin{align*}
        m\left( \sum_s \ket{b_s} \bra{a_s} \otimes \mathrm{id} \right) m^\dagger
        &= \sum_{s,j} \pi_\Psi(b_s) \ket{\xi_j} \bra{\xi_j} \pi_\Psi(a_s^*) 
        = \sum_s \pi_\Psi(b_s a_s^*),
    \end{align*}
    and for the right action:
    \begin{align*}
        m\left( \sum_s \mathrm{id}\otimes \ket{b_s} \bra{a_s} \right) m^\dagger
        &=\sum_{s,j}\ket{\xi_j b_s}\bra{\xi_j a_s}\\
        &=\sum_{j,s}\pi_\Psi^\op(\sigma_{i/2}(b_s))\ket{\xi_j}\bra{\xi_j}\pi_\Psi^\op(\sigma_{-i/2}(a_s^*))\\
        &=\sum_s\pi_\Psi^\op(\sigma_{-i/2}(a_s^*)\sigma_{i/2}(b_s)).
    \end{align*}
\end{proof}

\begin{remark} \label{cdotpi}
    Analogously, for $a,b \in \mathcal{M}$, $B \in L^2(\mathcal{M}, \Psi)$, one has:
    \begin{itemize}
        \item[i)] $m\left( \ket{b}\bra{a} \otimes B \right) m^\dagger = \pi_\Psi(b) B \pi_\Psi(a^*)$,
        \item[ii)] $m\left( B \otimes \ket{b}\bra{a} \right) m^\dagger = \pi_\Psi^{\op}(\sigma_{i/2}(b)) B \pi_\Psi^{\op}(\sigma_{-i/2}(a^*))$.
    \end{itemize}
\end{remark}

\section{Left and right transpositions}
\label{sec:transpositions}

In this section, we introduce the notion of quantum transpositions in the framework of quantum graphs. To define quantum incidence operators and characterize their adjoints coherently, the classical transposition must be replaced by two dual algebraic operations, namely the \emph{left} and \emph{right transpositions}.

\begin{proposition} \label{conjugate and transpose}
Let $(\mathcal{M}, \Psi)$ be a quantum system. Then for all families $(a_s), (b_s) \subseteq \mathcal{M}$ indexed by $s \in S$, the following identities hold:
\begin{itemize}
    \item[i)] $(\mathrm{id} \otimes \eta^\dagger m)(\mathrm{id} \otimes \sum_{s\in S} \ket{a_s}\bra{b_s} \otimes \mathrm{id})(m^\dagger \eta \otimes \mathrm{id}) = \sum_{s \in S} \ket{\sigma_{-i}(b_s^*)}\bra{a_s^*}$,
    \item[ii)] $(\eta^\dagger m \otimes \mathrm{id})(\mathrm{id} \otimes \sum_{s\in S} \ket{a_s}\bra{b_s} \otimes \mathrm{id})(\mathrm{id} \otimes m^\dagger \eta) = \sum_{s \in S} \ket{b_s^*}\bra{\sigma_{-i}(a_s^*)}$.
\end{itemize}
\end{proposition}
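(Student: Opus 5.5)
By linearity in the rank-one pieces, it suffices to prove both identities for a single operator $\ket{a}\bra{b}$. I will evaluate each composite on an arbitrary $x\in\mathcal{M}$ and use the explicit formulas for $m^\dagger$ from Lemma~\ref{m-dagger-identities}. I will also need $\eta^\dagger$. From $\langle \lambda, \eta^\dagger(y)\rangle=\langle \lambda\I, y\rangle_\Psi=\bar\lambda\Psi(y)$ we get $\eta^\dagger=\Psi$, so $\eta^\dagger m(u\otimes v)=\Psi(uv)$. Likewise $m^\dagger\eta(1)=m^\dagger(\I)$.

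For i), pick an orthonormal basis $(\xi_j)$. Lemma~\ref{m-dagger-identities} with $a=\I$ gives $m^\dagger(\I)=\sum_j \xi_j\otimes\xi_j^*$. Applying $(\mathrm{id}\otimes\ket{a}\bra{b}\otimes\mathrm{id})$ to $\sum_j\xi_j\otimes\xi_j^*\otimes x$ yields $\sum_j \xi_j\otimes a\,\langle b,\xi_j^*\rangle\otimes x$. Then $(\mathrm{id}\otimes\eta^\dagger m)$ produces
\[
\sum_j \langle b,\xi_j^*\rangle_\Psi\,\Psi(ax)\,\xi_j .
\]
Two conversions now finish this part:
\begin{itemize}
\item Write $\Psi(ax)=\langle a^*,x\rangle_\Psi$.
\item Write $\langle b,\xi_j^*\rangle_\Psi=\Psi(b^*\xi_j^*)=\overline{\Psi(\xi_j b)}$. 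Using KMS, $\Psi(\xi_j b)=\Psi(b\,\sigma_{-i}(\xi_j))=\langle \sigma_{-i}(\xi_j)^*... \rangle$, and I rearrange this into $\langle \xi_j,\sigma_{-i}(b^*)\rangle_\Psi$. The check is that $\Psi(b^*\xi_j^*)=\overline{\Psi(\xi_j b)}$ and that KMS in the form $\Psi(\xi_j b)=\Psi(\sigma_{i}(b)\xi_j)$ gives $\overline{\Psi(\xi_j b)}=\Psi(\xi_j^*\sigma_i(b)^*)=\Psi(\xi_j^*\sigma_{-i}(b^*))=\langle\xi_j,\sigma_{-i}(b^*)\rangle$.
\end{itemize}
Summing over $j$ then gives $\sum_j\xi_j\langle\xi_j,\sigma_{-i}(b^*)\rangle=\sigma_{-i}(b^*)$. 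Hence the result is $\sigma_{-i}(b^*)\langle a^*,x\rangle$, which is exactly $\ket{\sigma_{-i}(b^*)}\bra{a^*}x$.

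For ii), I use the other form, $m^\dagger(\I)=\sum_j\sigma_{-i}(\xi_j^*)\otimes\xi_j$, placed in the right-hand legs. Applying the composite to $x\otimes m^\dagger(\I)$ gives
\[
\sum_j \Psi\bigl(x\,a\bigr)\langle b,\sigma_{-i}(\xi_j^*)\rangle\,\xi_j .
\]
Similar KMS manipulations should reduce $\sum_j\langle b,\sigma_{-i}(\xi_j^*)\rangle\xi_j$ to $b^*$. The step is $\langle b,\sigma_{-i}(\xi_j^*)\rangle=\Psi(b^*\sigma_{-i}(\xi_j^*))=\Psi(\xi_j^*b^*)=\langle\xi_j,b^*\rangle$. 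It also requires rewriting $\Psi(xa)=\Psi(a\sigma_{-i}(x))$ as $\langle\sigma_{-i}(a^*),x\rangle_\Psi$. That last identity is the main sign/modular bookkeeping check: I would verify $\langle\sigma_{-i}(a^*),x\rangle=\Psi(\sigma_{-i}(a^*)^*x)=\Psi(\sigma_{i}(a)x)$, which equals $\Psi(xa)$ by KMS. The result is $\ket{b^*}\bra{\sigma_{-i}(a^*)}x$.

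The main obstacle is purely bookkeeping of modular automorphisms: the relations $\sigma_z(y)^*=\sigma_{\bar z}(y^*)$ and the correct direction of KMS. The structure itself is direct.
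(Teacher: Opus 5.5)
Your proof is correct and takes the same approach as the paper: reduce to a rank-one $\ket{a}\bra{b}$, expand $m^\dagger(\I)$ in an orthonormal basis, use $\eta^\dagger m(u\otimes v)=\Psi(uv)$, and resum after rewriting the coefficients with the KMS condition and $\sigma_z(y)^*=\sigma_{\bar z}(y^*)$. Part ii) is exactly the paper's computation; in part i) you use the form $m^\dagger(\I)=\sum_j \xi_j\otimes\xi_j^*$ and act on $x$ directly, whereas the paper uses $\sum_j\sigma_{-i}(\xi_j^*)\otimes\xi_j$ and pairs against a test vector, which is only a cosmetic difference. In the write-up, drop the unfinished fragment $\Psi(\xi_j b)=\Psi(b\,\sigma_{-i}(\xi_j))=\ldots$, because your subsequent check $\overline{\Psi(\xi_j b)}=\Psi(\xi_j^*\sigma_{-i}(b^*))$ is the actual, correct argument.
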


\begin{proof}
i) Let $(\xi_j)_{j \in J}$ be an orthonormal basis of $L^2(\mathcal{M}, \Psi)$. Recall that $m^\dagger(\I) = \sum_{j\in J} \sigma_{-i}(\xi_j^*) \otimes \xi_j$. Then:
\begin{align*}
\langle c, (\mathrm{id} \otimes \eta^\dagger m)(\mathrm{id} \otimes \ket{a}\bra{b} \otimes \mathrm{id})(m^\dagger \eta \otimes \mathrm{id})d \rangle
&= \sum_{j \in J} \langle c, (\mathrm{id} \otimes \eta^\dagger m)(\sigma_{-i}(\xi_j^*) \otimes \langle b, \xi_j \rangle a \otimes d) \rangle \\
&= \sum_{j \in J} \langle a^*, d \rangle \langle b, \xi_j \rangle \langle c, \sigma_{-i}(\xi_j^*) \rangle \\
&= \sum_{j \in J} \langle a^*, d \rangle \langle b, \xi_j \rangle \langle \xi_j, c^* \rangle \\
&= \langle a^*, d \rangle \langle b, c^* \rangle \\
&= \langle c, \ket{\sigma_{-i}(b^*)}\bra{a^*} d \rangle.
\end{align*}

ii) Similarly,
\begin{align*}
(\eta^\dagger m \otimes \mathrm{id})(\mathrm{id} \otimes \ket{a}\bra{b} \otimes \mathrm{id})(\mathrm{id} \otimes m^\dagger \eta)c
&= \sum_{j \in J} (\eta^\dagger m \otimes \mathrm{id})(c \otimes \langle b, \sigma_{-i}(\xi_j^*) \rangle a \otimes \xi_j) \\
&= \sum_{j \in J} \langle b, \sigma_{-i}(\xi_j^*) \rangle \langle c^*, a \rangle \xi_j \\
&= \sum_{j \in J} \langle \sigma_{-i}(a^*), c \rangle \langle \xi_j, b^* \rangle \xi_j \\
&= \ket{b^*} \bra{\sigma_{-i}(a^*)} c.
\end{align*}
Linearity completes the proof.
\end{proof}

The preceding computational identities naturally extend to the entire space of bounded operators, leading to the following definitions of the left and right transposition operators on $B(L^2(\mathcal{M}, \Psi))$.

\begin{definition}
The linear maps from Proposition~\ref{conjugate and transpose} will be referred to as the right and left transpositions, denoted respectively by:
$$M^{TR} := (\mathrm{id} \otimes \eta^\dagger m)(\mathrm{id} \otimes M \otimes \mathrm{id})(m^\dagger \eta \otimes \mathrm{id}),$$
$$M^{TL} := (\eta^\dagger m \otimes \mathrm{id})(\mathrm{id} \otimes M \otimes \mathrm{id})(\mathrm{id} \otimes m^\dagger \eta).$$
\end{definition}

Let us first observe that $TL$ and $TR$ are mutual inverses.

\begin{lemma}
\label{lemma:trans}
For any $M \in B(L^2(\mathcal{M}, \Psi))$, we have:
$$(M^{TL})^{TR} = (M^{TR})^{TL} = M.$$
\end{lemma}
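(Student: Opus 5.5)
By linearity, and since every $M\in B(L^2(\mathcal{M},\Psi))$ is a finite sum of rank-one operators $\ket{a}\bra{b}$, it suffices to check both identities on $M=\ket{a}\bra{b}$. Here Proposition~\ref{conjugate and transpose} gives explicit formulas for both transpositions, so the proof reduces to composing them.

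First apply the left transposition, using part ii):
$$\ket{a}\bra{b}^{TL}=\ket{b^*}\bra{\sigma_{-i}(a^*)}.$$
Then apply the right transposition to this operator, using part i) with $a_s=b^*$ and $b_s=\sigma_{-i}(a^*)$:
$$\bigl(\ket{a}\bra{b}^{TL}\bigr)^{TR}=\ket{\sigma_{-i}(\sigma_{-i}(a^*)^*)}\bra{(b^*)^*}.$$
To simplify the first vector we need a modular identity: $\sigma_z(x)^*=\sigma_{\bar z}(x^*)$. This follows from $\sigma_z(x)=\rho^{iz}x\rho^{-iz}$ and the self-adjointness of $\rho$. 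In particular $\sigma_{-i}(a^*)^*=\sigma_{i}(a)$, and so $\sigma_{-i}(\sigma_i(a))=a$ by the group property. The result is therefore $\ket{a}\bra{b}$.

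The other order works the same way. Part i) gives $\ket{a}\bra{b}^{TR}=\ket{\sigma_{-i}(b^*)}\bra{a^*}$. Applying part ii) then gives $\ket{(a^*)^*}\bra{\sigma_{-i}(\sigma_{-i}(b^*)^*)}=\ket{a}\bra{\sigma_{-i}(\sigma_i(b))}=\ket{a}\bra{b}$.

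The only point needing care is the conjugation rule for the analytically continued modular group. With the density-matrix formula from the Remark following the Tomita--Takesaki theorem this is immediate: $(\rho^{iz}x\rho^{-iz})^*=\rho^{i\bar z}x^*\rho^{-i\bar z}$, because $(\rho^{w})^*=\rho^{\bar w}$ for positive invertible $\rho$. One should also keep track that bras are antilinear in the rank-one decomposition. This causes no trouble, since Proposition~\ref{conjugate and transpose} is already stated for arbitrary finite sums $\sum_s\ket{a_s}\bra{b_s}$, and $TL$, $TR$ are linear in $M$.
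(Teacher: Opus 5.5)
Your proof is correct and follows the same route as the paper: reduce to a rank-one $M=\ket{a}\bra{b}$, compose the explicit formulas from Proposition~\ref{conjugate and transpose}, and simplify using $\sigma_{-i}(a^*)^*=\sigma_i(a)$. The only differences are that you justify the conjugation rule $\sigma_z(x)^*=\sigma_{\bar z}(x^*)$ and write out the reverse order explicitly, both of which the paper uses without comment or calls analogous.
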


\begin{proof}
Let $M = \ket{a}\bra{b}$. Then:
$$(M^{TL})^{TR} = (\ket{b^*} \bra{\sigma_{-i}(a^*)})^{TR} = \ket{\sigma_{-i}(\sigma_i(a))}\bra{b} = \ket{a}\bra{b} = M.$$
The proof of $(M^{TR})^{TL} = M$ is analogous.
\end{proof}

This structural duality immediately implies that left-invariance and right-invariance are completely equivalent notions.

\begin{remark}
An immediate consequence of Lemma~\ref{lemma:trans} is that left symmetry ($M = M^{TL}$) and right symmetry ($M = M^{TR}$) are equivalent. Indeed:
$$M = M^{TL} \Rightarrow M = (M^{TL})^{TR} = M^{TR},$$
$$M = M^{TR} \Rightarrow M = (M^{TR})^{TL} = M^{TL}.$$
\end{remark}

Beyond their behavior under composition, these transpositions interact nicely with algebraic products, traces, and conjugations, closely mirroring the classical properties of the standard matrix transpose.

\begin{proposition} \label{Transpose_features}
Let $(\mathcal{M}, \Psi)$ be a quantum system and $M, M_1, M_2 \in B(L^2(\mathcal{M}, \Psi))$. Then:
\begin{enumerate}
    \item[i)] $(M_1 \cdot_S M_2)^{TR} = M_2^{TR} \cdot_S M_1^{TR}$, \quad $(M_1 \cdot_S M_2)^{TL} = M_2^{TL} \cdot_S M_1^{TL}$,
    \item[ii)] $(M_1 M_2)^{TR} = M_2^{TR} M_1^{TR}$, \quad $(M_1 M_2)^{TL} = M_2^{TL} M_1^{TL}$,
    \item[iii)] $\mathrm{Tr}(M) = \mathrm{Tr}(M^{TR}) = \mathrm{Tr}(M^{TL})$,
    \item[iv)] $(M^{TL})^\dagger = (M^\dagger)^{TR}$, \quad $(M^{TR})^\dagger = (M^\dagger)^{TL}$.
\end{enumerate}
\end{proposition}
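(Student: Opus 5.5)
By linearity, it suffices to check all four identities on rank-one operators $M=\ket{a}\bra{b}$, $M_1=\ket{a}\bra{b}$ and $M_2=\ket{c}\bra{d}$. For these we use the explicit formulas of Proposition~\ref{conjugate and transpose}:
$$\ket{a}\bra{b}^{TR}=\ket{\sigma_{-i}(b^*)}\bra{a^*}, \qquad \ket{a}\bra{b}^{TL}=\ket{b^*}\bra{\sigma_{-i}(a^*)}.$$
We argue for $TR$ throughout. The $TL$ statements follow either by the same computation or from Lemma~\ref{lemma:trans}: apply the $TR$ identity to $M_i^{TL}$ and then apply $TL$ to both sides.

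For i), Theorem~\ref{schur_product} gives $M_1\cdot_S M_2=\ket{ac}\bra{bd}$. Its right transpose is
$$\ket{\sigma_{-i}(d^*b^*)}\bra{c^*a^*}=\ket{\sigma_{-i}(d^*)\sigma_{-i}(b^*)}\bra{c^*a^*},$$
because $\sigma_{-i}$ is an algebra automorphism. By Theorem~\ref{schur_product} again, this is exactly $M_2^{TR}\cdot_S M_1^{TR}$.

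For ii), we have $M_1M_2=\langle b,c\rangle\,\ket{a}\bra{d}$, whose right transpose is $\langle b,c\rangle\ket{\sigma_{-i}(d^*)}\bra{a^*}$. On the other side,
$$M_2^{TR}M_1^{TR}=\langle c^*,\sigma_{-i}(b^*)\rangle\,\ket{\sigma_{-i}(d^*)}\bra{a^*}.$$
So we must show $\langle c^*,\sigma_{-i}(b^*)\rangle=\langle b,c\rangle$, that is, $\Psi(c\,\sigma_{-i}(b^*))=\Psi(b^*c)$. This is precisely the KMS condition. We also have to be careful with complex conjugation, since $\bra{\cdot}$ is antilinear; the scalar arising is $\langle c^*,\sigma_{-i}(b^*)\rangle$ with no extra conjugation.

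For iii), we have $\Tr\ket{a}\bra{b}=\langle b,a\rangle=\Psi(b^*a)$, and
$$\Tr\bigl(\ket{\sigma_{-i}(b^*)}\bra{a^*}\bigr)=\langle a^*,\sigma_{-i}(b^*)\rangle=\Psi(a\,\sigma_{-i}(b^*))=\Psi(b^*a),$$
again by KMS.

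For iv), the adjoint of a rank-one operator is $(\ket{a}\bra{b})^\dagger=\ket{b}\bra{a}$. Hence
$$(M^{TL})^\dagger=\ket{\sigma_{-i}(a^*)}\bra{b^*}=(\ket{b}\bra{a})^{TR}=(M^\dagger)^{TR}.$$
The second identity follows similarly, or by taking adjoints and using Lemma~\ref{lemma:trans}.

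The only genuine subtlety is the bookkeeping of the modular group and the antilinearity in ii) and iii). We always reduce these to the single identity $\Psi(x\,\sigma_{-i}(y))=\Psi(yx)$, and we use that $\sigma_{-i}$ is multiplicative but not $*$-preserving, with $\sigma_{-i}(x)^*=\sigma_{i}(x^*)$. Everything else is routine.
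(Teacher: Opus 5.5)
Your proof is correct and follows the paper's approach. Like the paper, you reduce to rank-one operators, compute directly with the explicit formulas of Proposition~\ref{conjugate and transpose}, and use multiplicativity of $\sigma_{-i}$ for i) and the KMS condition for ii) and iii). The only difference is that you verify the $TR$ identities directly and recover the $TL$ ones via Lemma~\ref{lemma:trans}, whereas the paper computes the $TL$ case and calls $TR$ analogous.
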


\begin{proof}
We prove the statements for left transposition; the right case is analogous.

i) Let $M_j = \ket{a_j}\bra{b_j}$. Then:
$$(M_1 \cdot_S M_2)^{TL} = \left(\ket{a_1 a_2}\bra{b_1 b_2}\right)^{TL} = \ket{b_2^* b_1^*}\bra{\sigma_{-i}(a_2^*) \sigma_{-i}(a_1^*)} = M_2^{TL} \cdot_S M_1^{TL}.$$

ii)
\begin{align*}
(M_1 M_2)^{TL}
&= \langle b_1, a_2 \rangle \cdot \left(\ket{a_1}\bra{b_2}\right)^{TL}
= \langle \sigma_{-i}(a_2^*), b_1^* \rangle \cdot \ket{b_2^*}\bra{\sigma_{-i}(a_1^*)} \\
&= M_2^{TL} M_1^{TL}.
\end{align*}

iii) Using the cyclicity of the trace and assuming $M=\ket{a}\bra{b}$:
$$\mathrm{Tr}(M) = \mathrm{Tr}(\ket{a}\bra{b}) = \Psi(b^* a),$$
$$\mathrm{Tr}(M^{TL}) = \mathrm{Tr}(\ket{b^*}\bra{\sigma_{-i}(a^*)}) = \Psi(\sigma_i(a) b^*) = \Psi(b^* a).$$

iv)
$$((\ket{a}\bra{b})^{TL})^\dagger = (\ket{b^*}\bra{\sigma_{-i}(a^*)})^\dagger = \ket{\sigma_{-i}(a^*)} \bra{b^*} = (\ket{b}\bra{a})^{TR}.$$
\end{proof}

Given an operator $M \in B(L^2(\mathcal{M},\Psi))$, one can consider the following three symmetry properties: self-adjointness, invariance under the left transposition (equivalently, the right transposition), and commutation with the involution $*$ on $\mathcal{M}$. These properties are independent in general. Nevertheless, any two of them imply the third.
\begin{theorem}\label{undirected_equivalence}
    Let $(\mathcal{M}, \Psi)$ be a quantum system. Then for any $M \in B(L^2(\mathcal{M}, \Psi))$, any two of the following conditions imply the third:
    \begin{itemize}
        \item[i)] $M$ is right (and left) symmetric: $M=M^{TR}$ (and $M=M^{TL}$),
        \item[iI)] $M$ is $*$-preserving: $(Ma)^*=M(a^*)$ for all $a\in L^2(\mathcal{M}, \Psi)$,
        \item[iii)] $M$ is self-adjoint: $M=M^\dagger$.
    \end{itemize}
\end{theorem}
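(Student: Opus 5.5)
The plan is to reduce all three conditions to one operator identity linking the right transposition, the Hilbert space adjoint, and the conjugation induced by the involution of $\mathcal{M}$. For $M\in B(L^2(\mathcal{M},\Psi))$ define the \emph{conjugate} operator $M^\sharp(x):=(M(x^*))^*$. Condition ii) says exactly $M=M^\sharp$. The maps $M\mapsto M^\sharp$ and $M\mapsto M^\dagger$ are antilinear involutions on $B(L^2(\mathcal{M},\Psi))$. The claimed key identity is
\[
M^{TR}=(M^\sharp)^\dagger \qquad \text{for all } M\in B(L^2(\mathcal{M},\Psi)).
\]
Both sides are linear in $M$, since each is the composite of two antilinear maps. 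Every operator is a finite sum of rank-one operators $\ket{a}\bra{b}$, so it suffices to check the identity on these.

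For $M=\ket{a}\bra{b}$ we have $M^\sharp(x)=\overline{\Psi(b^*x^*)}\,a^*=\Psi(xb)\,a^*$. The KMS condition gives $\Psi(xb)=\Psi(\sigma_i(b)x)=\langle \sigma_i(b)^*,x\rangle_\Psi=\langle\sigma_{-i}(b^*),x\rangle_\Psi$. Here we use that $\sigma_z(y)^*=\sigma_{\bar z}(y^*)$, which is immediate from $\sigma_z(y)=\rho^{iz}y\rho^{-iz}$. Hence
\[
\ket{a}\bra{b}^\sharp=\ket{a^*}\bra{\sigma_{-i}(b^*)},\qquad \bigl(\ket{a}\bra{b}^\sharp\bigr)^\dagger=\ket{\sigma_{-i}(b^*)}\bra{a^*}.
\]
By Proposition~\ref{conjugate and transpose} i), the right-hand side is precisely $\ket{a}\bra{b}^{TR}$, which proves the identity. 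The only delicate step is this one: getting the modular twist $\sigma_{-i}$ on the correct side, with the conjugation $\sigma_z(y)^*=\sigma_{\bar z}(y^*)$ handled properly. Everything afterwards is formal.

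With the identity in hand, the three implications are short.

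If ii) and iii) hold, then $M^{TR}=(M^\sharp)^\dagger=M^\dagger=M$, which is i).

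If i) and ii) hold, then $M=M^{TR}=(M^\sharp)^\dagger=M^\dagger$, which is iii).

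If i) and iii) hold, then $M=M^{TR}=(M^\sharp)^\dagger$. Taking adjoints gives $M^\sharp=M^\dagger=M$, which is ii).

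The parenthetical equivalence of left and right symmetry in i) is the Remark following Lemma~\ref{lemma:trans}. Alternatively, one checks in the same way that $M^{TL}=(M^\dagger)^\sharp$ and runs the same three implications.
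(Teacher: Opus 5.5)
Your proof is correct and follows essentially the same route as the paper. The paper also introduces the conjugate $\overline{M}(a)=M(a^*)^*$, computes it on rank-one operators as $\ket{v^*}\bra{\sigma_{-i}(w^*)}$, and deduces $M^{TR}=\overline{M}^\dagger$; your version additionally spells out the three implications and the antilinearity argument for extending from rank-one operators, both of which the paper leaves implicit.
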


\begin{proof}
    Let us denote $\overline{M}(a):=M(a^*)^*$. Note that $M$ is $\ast$-preserving if and only if $\overline{M}=M$. Without loss of generality, assume that $M=\ket{v}\bra{w}$. Then:
    \begin{align*}
        \overline{M}(a)&:=(Ma^*)^*= \left(\langle w, a^* \rangle\, v\right)^*=\langle a^*, w \rangle v^*\\
        &=\langle \sigma_{-i}(w^*), a \rangle v^*
        =\ket{v^*}\bra{\sigma_{-i}(w^*)}a.
    \end{align*}
    It follows that $\overline{M}=(M^\dagger)^{TL}=(M^{TR})^\dagger$, hence $M^{TR}=\overline{M}^\dagger$ and $M^\dagger=\overline{M}^{TR}$, which completes the proof.
\end{proof}

As an immediate consequence of this triality, we obtain the following characterization of how the property of being real behaves under taking adjoints.

\begin{lemma}
    If $M$ is real, then $M^\dagger=M^{TR}=M^{TL}$.
\end{lemma}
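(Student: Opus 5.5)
The plan is to reuse the identities obtained in the proof of Theorem~\ref{undirected_equivalence}, where we introduced $\overline{M}(a):=M(a^*)^*$. That proof showed, for an arbitrary $M\in B(L^2(\mathcal{M},\Psi))$ (first for rank-one operators and then by linearity), that
\[
\overline{M}=(M^\dagger)^{TL}=(M^{TR})^\dagger .
\]
Since $M$ is real, it is in particular $*$-preserving, so $\overline{M}=M$. These two facts are the only inputs needed.

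The argument proceeds in two steps. First, from $M=\overline{M}=(M^\dagger)^{TL}$, apply the right transposition to both sides. Lemma~\ref{lemma:trans} states that $TL$ and $TR$ are mutually inverse, so this gives $M^{TR}=((M^\dagger)^{TL})^{TR}=M^\dagger$.

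Second, use the other half of the identity. From $M=(M^{TR})^\dagger$, take adjoints to get $M^\dagger=M^{TR}$ again. To obtain the left transposition, use Proposition~\ref{Transpose_features}\,iv), namely $(M^{TR})^\dagger=(M^\dagger)^{TL}$. Since $M^\dagger$ is $*$-preserving by \eqref{A:dagger:real}, we have $\overline{M^\dagger}=M^\dagger$. Applying the identity $\overline{N}=(N^{TR})^\dagger$ with $N=M^\dagger$ gives $M^\dagger=((M^\dagger)^{TR})^\dagger=M^{TL}$, where the last equality uses Proposition~\ref{Transpose_features}\,iv) together with $M^{\dagger\dagger}=M$. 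Combining the two steps yields $M^\dagger=M^{TR}=M^{TL}$.

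The only point requiring care is this second step: the equality $M^{TL}=M^\dagger$ genuinely uses the reality of $M^\dagger$, and not merely that $M$ is $*$-preserving. So one has to track carefully which of the two identities is applied to $M$ and which to $M^\dagger$. Everything else is a direct consequence of results already established.
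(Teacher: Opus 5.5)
Your proof is correct and is essentially the paper's argument: from $M=\overline{M}=(M^\dagger)^{TL}$ and Lemma~\ref{lemma:trans} you get $M^{TR}=M^\dagger$, and then the same identity applied to the $*$-preserving operator $M^\dagger$ gives $M^\dagger=M^{TL}$. In the second step you go through $\overline{N}=(N^{TR})^\dagger$ and Proposition~\ref{Transpose_features}\,iv), which is equivalent to the paper's direct use of $\overline{N}=(N^\dagger)^{TL}$ with $N=M^\dagger$.
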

\begin{proof}
    First, recall that $M$ is $*$-preserving, which implies $M=\overline{M}=(M^\dagger)^{TL}$, and hence $M^{TR}=M^\dagger$. Since $M^\dagger$ is also $*$-preserving, we immediately obtain $M^{\dagger}=\overline{M^\dagger}=M^{TL}$.
\end{proof}

\begin{example}
    If $M$ is only $*$-preserving but not real, then the equality $M^\dagger=M^{TL}$ does not hold in general. To see this, consider the quantum system $(\Mat_2, \Tr(\rho \cdot))$, where 
    \[\rho =\left[\begin{array}{cc}
        p & 0 \\
        0 & q \\
    \end{array}\right]\]
    for some non-zero $p,q \in \RR$ such that $p+q=1$. Moreover, let $A=\ket{\I}\bra{w}$, where
    \[w=\left[\begin{array}{cc}
        0 & 1 \\
        \gamma^{-1} & 0 \\
    \end{array}\right]\]
    for $\gamma=\frac{p}{q}$.
    Then $A^\dagger=\ket{w}\bra{\I}$ and $A^{TR}=\ket{\sigma_{-i}(w^*)}\bra{\I}=A^\dagger$, because
    \[\sigma_{-i}(w^*)=\left[\begin{array}{cc}
        p & 0 \\
        0 & q
    \end{array}\right]\left[\begin{array}{cc}
        0 & \gamma^{-1} \\
        1 & 0
    \end{array}\right]\left[\begin{array}{cc}
        \frac{1}{p} & 0 \\
        0 & \frac{1}{q} \\
    \end{array}\right]=\left[\begin{array}{cc}
        0 & \frac{\gamma^{-1}p}{q} \\
        \frac{q}{p} & 0 \\
    \end{array}\right]=w.\]
    On the other hand, we have $A^{TL}=\ket{w^*}\bra{\I}\neq A^\dagger$.
\end{example}

The following corollary is an immediate consequence of the preceding theorem and lemma.

\begin{corollary}
    For any quantum graph $(\mathcal{M}, \Psi, A)$, we have $A^{\dagger}=A^{TL}=A^{TR}$. 
\end{corollary}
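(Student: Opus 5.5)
The plan is to reduce the statement to the preceding lemma, which says that every real operator $M$ satisfies $M^\dagger=M^{TR}=M^{TL}$. It therefore suffices to check that the adjacency operator $A$ of any quantum graph $(\mathcal{M},\Psi,A)$ is real.

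This is immediate from Definition~\ref{q-graph:definition}. Condition \eqref{A:real} states that $A$ is $*$-preserving, and condition \eqref{A:dagger:real} states that $A^\dagger$ is $*$-preserving. Together these are exactly the definition of a real operator. The Schur idempotency condition \eqref{schure:invariance} plays no role here.

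Applying the lemma to $M=A$ then gives $A^\dagger=A^{TR}=A^{TL}$.

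It may help to recall how the lemma itself works, since it is essentially the whole content. Set $\overline{M}(a):=M(a^*)^*$. Theorem~\ref{undirected_equivalence} gives $\overline{M}=(M^\dagger)^{TL}$. If $A=\overline{A}$, then $A=(A^\dagger)^{TL}$, and applying $TR$ together with Lemma~\ref{lemma:trans} yields $A^{TR}=A^\dagger$. Similarly, $A^\dagger=\overline{A^\dagger}=(A^{\dagger\dagger})^{TL}=A^{TL}$.

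There is no real obstacle. The only point needing care is that \emph{both} reality conditions are used: the example preceding the corollary shows that \eqref{A:real} alone gives only $A^\dagger=A^{TR}$, not $A^\dagger=A^{TL}$.
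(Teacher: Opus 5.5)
Your proposal is correct and follows the paper's own argument: conditions \eqref{A:real} and \eqref{A:dagger:real} say exactly that $A$ is real, and the corollary then follows from the preceding lemma, which rests on Theorem~\ref{undirected_equivalence} and Lemma~\ref{lemma:trans}. Your reconstruction of that lemma, and your remark that the preceding example shows both reality conditions are needed, are also accurate.
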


Finally, we turn our attention to the interplay between real operators and the modular automorphism group.

\begin{lemma} \label{sigma_commuting}
    Let $M\in B(L^2(\mathcal{M}, \Psi))$ be real. Then $\sigma_{t}\circ M=M \circ \sigma_{t}$ for all $t \in \RR$.
\end{lemma}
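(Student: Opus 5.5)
The plan is to extract from the two reality conditions a relation between $M$ and $\sigma_{-i}$, and then pass to real $t$ by functional calculus.

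\textbf{Step 1: the relation $M\circ\sigma_{-i}=\sigma_{-i}\circ M$.} Write $J(a)=a^*$, an antilinear map on $L^2(\mathcal{M},\Psi)$. Condition \eqref{A:real} says $JM=MJ$, and \eqref{A:dagger:real} says $JM^\dagger=M^\dagger J$. The key identity to establish is how $J$ interacts with the inner product:
\[
\langle a^*,b^*\rangle_\Psi=\Psi(ab^*)=\Psi(b^*\sigma_{-i}(a))=\langle b,\sigma_{-i}(a)\rangle_\Psi ,
\]
by the KMS condition. Thus $J^\dagger J=\sigma_{-i}$ in the antilinear sense, i.e.\ $\langle Ja,Jb\rangle=\langle b,\sigma_{-i}a\rangle$. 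Now compute, for $a,b\in\mathcal{M}$,
\[
\langle b,\sigma_{-i}(Ma)\rangle=\langle (Ma)^*,b^*\rangle=\langle M(a^*),b^*\rangle=\langle a^*,M^\dagger(b^*)\rangle=\langle a^*,(M^\dagger b)^*\rangle=\langle M^\dagger b,\sigma_{-i}a\rangle=\langle b,M\sigma_{-i}a\rangle .
\]
The first and fifth equalities use the identity above, the second uses \eqref{A:real}, and the fourth uses \eqref{A:dagger:real}. Hence $\sigma_{-i}M=M\sigma_{-i}$.

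\textbf{Step 2: from $\sigma_{-i}$ to real $t$.} By the Remark, $\sigma_{-i}(a)=\rho a\rho^{-1}$, where $\rho$ is the density of $\Psi$. As an operator on $L^2(\mathcal{M},\Psi)$ this is $\Delta=L_\rho R_{\rho^{-1}}$. By the identity in Step~1 it satisfies $\langle b,\Delta a\rangle=\langle a^*,b^*\rangle$, which is Hermitian symmetric in $(a,b)$ and gives $\langle a,\Delta a\rangle=\|a^*\|^2\ge0$. So $\Delta$ is a positive invertible operator on the finite-dimensional Hilbert space. Moreover,
\[
\sigma_t(a)=\rho^{it}a\rho^{-it}=\Delta^{it}a ,
\]
since $L_\rho$ and $R_{\rho^{-1}}$ are commuting positive operators, so $\Delta^{it}=L_{\rho^{it}}R_{\rho^{-it}}$. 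An operator commuting with a positive operator $\Delta$ commutes with every Borel function of $\Delta$; in finite dimensions one can see this directly, since $M$ preserves each eigenspace of $\Delta$. Therefore $M$ commutes with $\Delta^{it}=\sigma_t$ for all real $t$.

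\textbf{Main obstacle.} The delicate point is keeping track of antilinearity in Step~1. One must check that the chain of equalities uses only inner-product moves that are valid for antilinear $J$, in particular conjugating scalars correctly. If this becomes messy, the fallback is to verify Step~1 on rank-one operators $M=\ket{v}\bra{w}$ via the formula $\overline{M}=(M^\dagger)^{TL}$ from the proof of Theorem~\ref{undirected_equivalence}, together with Proposition~\ref{conjugate and transpose}.
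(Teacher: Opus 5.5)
Your proof is correct. It has the same two-stage skeleton as the paper: first show $M\circ\sigma_{-i}=\sigma_{-i}\circ M$, then pass to real $t$. Both stages are carried out differently, though.

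\textbf{First stage.} The paper reduces to $M=\ket{a}\bra{b}$ and works through the transposition calculus. It uses $\overline{M}=(M^{TR})^\dagger=(M^\dagger)^{TL}$ from the proof of Theorem~\ref{undirected_equivalence}, the consequence $M^{TL}=M^{TR}=M^\dagger$ of reality, and a KMS manipulation of $\ket{\sigma_{-i}(a^*)}\bra{b^*}\circ\sigma_{-i}$. This is essentially your announced fallback. You instead isolate the single identity $\langle a^*,b^*\rangle_\Psi=\langle b,\sigma_{-i}(a)\rangle_\Psi$, which says that $\sigma_{-i}$ is the modular operator $S^\dagger S$ of the involution $S(a)=a^*$. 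The two reality conditions say exactly that $M$ commutes with $S$ and with $S^\dagger$, so the commutation with $\sigma_{-i}$ follows in one line. Your argument is basis-free and avoids the rank-one reduction.

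That reduction needs some care in the paper, because reality is not inherited by rank-one summands. A real rank-one operator must have its range in the centralizer of $\Psi$, so for non-tracial $\Psi$ even $\mathrm{id}$ is not a sum of real rank-one operators. To make the paper's argument work, one observes that the middle identity $(N^{TL})^\dagger\circ\sigma_{-i}=\sigma_{-i}\circ(N^\dagger)^{TL}$ holds for every $N$, and uses reality only for $M$ itself.

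\textbf{Second stage.} The paper simply cites \cite[Lemma~2.1]{Wasilewski_2024}. You observe that $\sigma_{-i}$ is a positive invertible operator on $L^2(\mathcal{M},\Psi)$ with $\sigma_t=(\sigma_{-i})^{it}$, so anything commuting with it preserves its eigenspaces. That makes your proof self-contained.

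The paper's route has the merit of exercising the transposition machinery it is building. Yours is shorter and shows the Tomita--Takesaki mechanism behind the lemma.
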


\begin{proof}
    Without loss of generality, we assume that $M=\ket{a}\bra{b}$ and that $M$ is real. By the previous lemma, $M^\dagger=M^{TL}=M^{TR}$. This allows us to compute:
    \begin{align*}
        M\circ \sigma_{-i}&=\overline{M}\circ \sigma_{-i}\\
        &=(M^{TR})^\dagger \circ\sigma_{-i}\\
        &=(M^{TL})^\dagger \circ\sigma_{-i}\\
        &=\ket{\sigma_{-i}(a^*)}\bra{b^*}\circ \sigma_{-i}\tag{$\star$}\\
        &=\sigma_{-i}\circ \ket{a^*}\bra{\sigma_{-i}(b^*)}\\
        &=\sigma_{-i}\circ (M^\dagger)^{TL}\\
        &=\sigma_{-i}\circ \overline{M}\\
        &=\sigma_{-i}\circ M,
    \end{align*}
    where we used Theorem~\ref{undirected_equivalence} in the penultimate step, and step $(\star)$ arises from the KMS condition. Hence, $\sigma_{-i}\circ M=M\circ \sigma_{-i}$, which implies $M\circ \sigma_t=\sigma_t\circ M$ (see \cite{Wasilewski_2024}, Lemma 2.1).
\end{proof}

\begin{corollary}
    For any quantum graph $(\mathcal{M}, \Psi, A)$, we have $\sigma_t \circ A=A\circ \sigma_t$ for all $t\in \CC$, and the same holds for $A^{\dagger}$.
\end{corollary}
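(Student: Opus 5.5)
The plan is to obtain the corollary directly from Lemma~\ref{sigma_commuting}. By Definition~\ref{q-graph:definition}, the adjacency operator $A$ of a quantum graph satisfies both \eqref{A:real} and \eqref{A:dagger:real}, so $A$ is real. Lemma~\ref{sigma_commuting} then gives $\sigma_t\circ A=A\circ\sigma_t$ for all real $t$.

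For $A^\dagger$, I will check that $A^\dagger$ is also real. Condition \eqref{A:dagger:real} says exactly that $A^\dagger$ is $*$-preserving. Since $(A^\dagger)^\dagger=A$, condition \eqref{A:real} says that $(A^\dagger)^\dagger$ is $*$-preserving. Hence $A^\dagger$ is real, and Lemma~\ref{sigma_commuting} applies to it as well. Alternatively, we can take adjoints in $\sigma_t A=A\sigma_t$. For real $t$, $\sigma_t$ is unitary on $L^2(\mathcal{M},\Psi)$ because $\Psi\circ\sigma_t=\Psi$, so $\sigma_t^\dagger=\sigma_{-t}$, and commutation with all $\sigma_{-t}$ gives the claim.

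The only step requiring care is extending from $t\in\RR$ to $t\in\CC$. In finite dimensions, $\sigma_z(a)=\rho^{iz}a\rho^{-iz}$. Decompose $L^2(\mathcal{M},\Psi)$ into the eigenspaces of the positive invertible operator $\Delta:a\mapsto\rho a\rho^{-1}$, so that $\sigma_z=\Delta^{iz}$. An operator commuting with $\Delta^{it}$ for all real $t$ preserves each spectral projection of $\Delta$; this holds because the projections are limits of averages $\frac{1}{T}\int_0^T \lambda^{-it}\Delta^{it}\,dt$, or because $t\mapsto\Delta^{it}$ separates distinct eigenvalues. Such an operator therefore commutes with every function of $\Delta$, in particular with $\Delta^{iz}=\sigma_z$ for all $z\in\CC$. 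This is also the content of the cited \cite[Lemma~2.1]{Wasilewski_2024}, so I would simply invoke that result. The argument for $A^\dagger$ is identical.
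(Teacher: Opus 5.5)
Your proposal is correct and is essentially the paper's argument: $A$ and $A^\dagger$ are both real by Definition~\ref{q-graph:definition}, so Lemma~\ref{sigma_commuting} applies to each of them. The passage between real and complex parameters is supplied by \cite[Lemma~2.1]{Wasilewski_2024}, and your spectral-projection argument is a valid self-contained proof of that step.
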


In particular, the following theorem holds.

\begin{theorem} \label{real-commuting:equivalence}
    Any $*$-preserving $M\in B(L^2(\mathcal{M},\Psi))$ is real if and only if $[M, \sigma_t]=0$ for all $t \in \RR$.
\end{theorem}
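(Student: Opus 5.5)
One direction is already available: if $M$ is real, Lemma~\ref{sigma_commuting} gives $[M,\sigma_t]=0$ for all $t\in\RR$. So only the converse needs work. Assume $M$ is $*$-preserving and commutes with every $\sigma_t$, $t\in\RR$. We must show that $M^\dagger$ is also $*$-preserving, i.e.\ $\overline{M^\dagger}=M^\dagger$, where $\overline{N}(a):=N(a^*)^*$.

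The plan is to compute $\overline{M^\dagger}$ directly from the KMS condition, without going through transpositions. We have $\sigma_t=\rho^{it}(\cdot)\rho^{-it}$, so $z\mapsto\sigma_z$ is entire in finite dimensions. The identity $M\sigma_t=\sigma_t M$ therefore extends by analytic continuation to all $z\in\CC$. In particular $M\sigma_{-i}=\sigma_{-i}M$ and $M\sigma_{i}=\sigma_{i}M$; this is the analogue of the cited \cite[Lemma 2.1]{Wasilewski_2024}, read in the other direction.

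Two facts will be used. The first is $\langle a^*,b\rangle=\Psi(ab)=\Psi(b\sigma_{-i}(a))=\langle b^*,\sigma_{-i}(a)\rangle$, which by KMS gives
$$\langle x^*, y\rangle=\langle \sigma_{-i}(y^*)^{*}\ldots\rangle,$$
or more usefully
$$\langle a, b^*\rangle=\langle b,\sigma_{-i}(a^*)\rangle \quad\text{up to conjugation,}$$
and I would pin down the exact form once. The second is that $\sigma_{-i}$ intertwines $*$ as $\sigma_{-i}(a)^*=\sigma_{i}(a^*)$.

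For $a,b\in\mathcal{M}$, the computation runs
$$\langle b, \overline{M^\dagger}(a)\rangle=\langle b,(M^\dagger a^*)^*\rangle
=\overline{\langle M^\dagger a^*,\sigma_{-i}(b^*)\rangle}\ \text{(KMS)}
=\overline{\langle a^*, M\sigma_{-i}(b^*)\rangle}
=\overline{\langle a^*,\sigma_{-i}(M b^*)\rangle}.$$
From there, use $*$-preservation $Mb^*=(Mb)^*$ and apply KMS again to return to $\langle b, M^\dagger a\rangle$ or $\langle Mb,a\rangle$. The expected outcome is $\overline{M^\dagger}=M^\dagger$, so $M^\dagger$ is $*$-preserving and $M$ is real.

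As an alternative route, I would use the proof of Theorem~\ref{undirected_equivalence}, which gives $\overline{M}=(M^{TR})^\dagger$, i.e.\ $M^{TR}=M^\dagger$ when $M$ is $*$-preserving. It would then suffice to show $M^{TR}=M^{TL}$, since $M^\dagger$ being $*$-preserving amounts to $M^{TL}=M^\dagger$. From Proposition~\ref{conjugate and transpose} one has $(\ket{a}\bra{b})^{TR}=\ket{\sigma_{-i}(b^*)}\bra{a^*}$ and $(\ket{a}\bra{b})^{TL}=\ket{b^*}\bra{\sigma_{-i}(a^*)}$. Moreover $\ket{\sigma_{-i}x}\bra{y}=\sigma_{-i}\ket{x}\bra{y}$ and $\ket{x}\bra{\sigma_{-i}y}=\ket{x}\bra{y}\sigma_{-i}^\dagger$, with $\sigma_{-i}^\dagger=\sigma_{i}$ in $L^2(\Psi)$ (to be checked). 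Together these give $M^{TR}=\sigma_{-i}\,M^{TL}\,\sigma_{i}$ for every $M$. If $M$ commutes with $\sigma$, then $M^{TL}$ commutes with $\sigma$ too, since by Proposition~\ref{Transpose_features}(ii) the transpose of a product reverses the order and $\sigma_z^{TL}$ is again a modular map. It follows that $M^{TR}=M^{TL}$, hence $M^\dagger=M^{TL}$, and by the computation in Theorem~\ref{undirected_equivalence} this means $\overline{M^\dagger}=M^\dagger$.

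The main obstacle is bookkeeping the modular factors. This means verifying the exact identity $M^{TR}=\sigma_{-i}M^{TL}\sigma_i$, with the correct signs of $\pm i$, and verifying that $\sigma_z^{TL}=\sigma_{\pm z}$-type so that commutation passes to the transpose. I would check both on rank-one operators using the KMS condition and $\sigma_z(a)^*=\sigma_{\bar z}(a^*)$.
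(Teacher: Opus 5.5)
Your alternative route is the paper's proof. The paper likewise deduces $M^{TL}=M^{TR}$ from the commutation with $\sigma_t$, then combines $M^\dagger=M^{TR}$ (from $\overline{M}=(M^{TR})^\dagger$) with $\overline{M^\dagger}=M^{TL}$. Your identity $M^{TR}=\sigma_{-i}M^{TL}\sigma_i$ spells out the step the paper only calls a direct consequence of Proposition~\ref{conjugate and transpose}.

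One correction to the bookkeeping you flagged: $\sigma_{-i}$ is self-adjoint on $L^2(\mathcal{M},\Psi)$, so $\sigma_{-i}^\dagger=\sigma_i$ is wrong. In general $\sigma_z^\dagger=\sigma_{-\bar z}$; for $\Psi=\Tr(\rho\,\cdot)$, both $\langle a,\rho b\rho^{-1}\rangle_\Psi$ and $\langle \rho a\rho^{-1},b\rangle_\Psi$ equal $\Tr(a^*\rho b)$. It is exactly $\ket{x}\bra{\sigma_{-i}(y)}=\ket{x}\bra{y}\sigma_{-i}$ that yields $M^{TR}=\sigma_{-i}M^{TL}\sigma_i$. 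With $\sigma_i$ in its place you would get $\sigma_{-i}M^{TL}\sigma_{-i}$, and commutation would leave a stray factor $\sigma_{-2i}$. Moreover, $\sigma_z^{TL}=\sigma_{-z}$ for every $z\in\CC$. So by Proposition~\ref{Transpose_features}(ii) your identity reads $M^{TR}=(\sigma_{-i}M\sigma_i)^{TL}$, and $M^{TR}=M^{TL}$ follows at once from $[M,\sigma_{\pm i}]=0$, without first transporting the commutation to $M^{TL}$.

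Your primary, transposition-free route also works once the stray complex conjugate is removed. The identity you wrote first, $\langle a^*,b\rangle=\langle b^*,\sigma_{-i}(a)\rangle$, is the right one, i.e.\ $\langle b,c^*\rangle=\langle c,\sigma_{-i}(b^*)\rangle$ with no bar. Use it twice, together with the definition of $M^\dagger$, $M\sigma_{-i}=\sigma_{-i}M$, $M(b^*)=(Mb)^*$, $\sigma_{-i}(x^*)=\sigma_i(x)^*$ and self-adjointness of $\sigma_{-i}$. The chain then closes:
\[
\langle b,\overline{M^\dagger}(a)\rangle=\langle M^\dagger(a^*),\sigma_{-i}(b^*)\rangle=\langle a^*,\sigma_{-i}((Mb)^*)\rangle=\langle\sigma_i(Mb),\sigma_{-i}(a)\rangle=\langle Mb,a\rangle=\langle b,M^\dagger a\rangle .
\]
This argument needs only the KMS condition and commutation with the single operator $\sigma_{-i}$. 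You correctly obtain that commutation from the real-$t$ one by analytic continuation in finite dimensions. The paper's argument instead reuses the transposition calculus of Section~\ref{sec:transpositions} and phrases the result as the symmetry $M^{TL}=M^{TR}$.
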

\begin{proof}
    The forward direction has already been established in Lemma~\ref{sigma_commuting}. Conversely, suppose that $[M, \sigma_t]=0$. Commutativity with the modular automorphism group implies that $M^{TL}=M^{TR}$ (which is a direct consequence of Proposition~\ref{conjugate and transpose}). Since $M$ is $*$-preserving, we have $M^\dagger=M^{TR}$. Furthermore, because $\overline{M^\dagger} = M^{TL}$, we immediately obtain $\overline{M^\dagger} = M^{TL} = M^{TR} = M^\dagger$, meaning that $M^\dagger$ is also $*$-preserving. Thus, $M$ is real.
\end{proof}
\section{Edge spaces and their connection with bimodules}
\label{sec:edge}

This section is devoted to the observation that, in general, the projection $P_A$ defined in Definition~\ref{projPA} is not orthogonal with respect to the standard Hilbert--Schmidt inner product. Nevertheless, it becomes self-adjoint when considered with respect to a suitably adapted inner product. We then use these properties to investigate the structure of the edge spaces $S(G)$ of a quantum graph. Finally, we establish that the edge spaces $S(G)\subseteq B(L^2(\mathcal{M},\Psi))$ are closely related to the closed $\mathcal{M}'$--$\mathcal{M}'$ bimodules appearing in Weaver's definition of a quantum graph~\cite{NW2015}. Moreover, we show that the entire construction admits an equivalent formulation in terms of $\mathcal{M}$--$\mathcal{M}$ bimodules.

To establish a rigorous connection between our straightforward approach and the structural bimodule framework, we first analyze the behavior of rank-one operators under transpositions and the modular automorphism group. We begin with a technical lemma.

\begin{lemma}
    Let $(\mathcal{M}, \Psi)$ be a finite-dimensional quantum system. For any $a,b\in L^2(\mathcal{M}, \Psi)$, we have:
    \begin{itemize}
        \item[i)] $$\ket{a^*}\bra{b^*}=(\ket{a}\bra{b}^\dagger)^{TL}\circ \sigma_i,$$
        \item[ii)] $$\ket{\sigma_{-i}(a^*)}\bra{\sigma_{-i}(b^*)}=\sigma_{-i}\circ (\ket{a}\bra{b}^\dagger)^{TL}.$$
    \end{itemize}
\end{lemma}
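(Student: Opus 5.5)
The plan is to reduce both identities to the explicit formula for the left transposition of a rank-one operator, Proposition~\ref{conjugate and transpose}~ii). After that, the only remaining ingredient is the adjoint of the modular automorphism $\sigma_i$ on $L^2(\mathcal{M},\Psi)$. Throughout I read $\ket{a}\bra{b}^\dagger$ as $(\ket{a}\bra{b})^\dagger=\ket{b}\bra{a}$.

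\textbf{Step 1 (the common term).} By Proposition~\ref{conjugate and transpose}~ii), $(\ket{x}\bra{y})^{TL}=\ket{y^*}\bra{\sigma_{-i}(x^*)}$. Taking $x=b$ and $y=a$ gives
\[
(\ket{a}\bra{b}^\dagger)^{TL}=\ket{a^*}\bra{\sigma_{-i}(b^*)}.
\]
Both i) and ii) are then statements about composing this rank-one operator with $\sigma_{\pm i}$.

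\textbf{Step 2 (identity ii)).} Composition on the left is immediate:
\[
\sigma_{-i}\circ\ket{a^*}\bra{\sigma_{-i}(b^*)}=\ket{\sigma_{-i}(a^*)}\bra{\sigma_{-i}(b^*)}.
\]
This is exactly ii), so nothing further is needed.

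\textbf{Step 3 (identity i)).} Here we must show that, for every $x$,
\[
\langle\sigma_{-i}(b^*),\sigma_i(x)\rangle_\Psi=\langle b^*,x\rangle_\Psi ,
\]
that is, $\sigma_i^\dagger\circ\sigma_{-i}=\mathrm{id}$ on $L^2(\mathcal{M},\Psi)$. I would verify this using the density-matrix representation $\Psi=\Tr(\rho\,\cdot)$ and $\sigma_z(a)=\rho^{iz}a\rho^{-iz}$. A cyclicity computation gives $\langle\sigma_z(a),c\rangle_\Psi=\langle a,\sigma_{-\bar z}(c)\rangle_\Psi$, so $\sigma_z^\dagger=\sigma_{-\bar z}$. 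In particular $\sigma_i^\dagger=\sigma_i$, which is positive, as expected for an analytic generator. Hence
\[
\langle\sigma_{-i}(b^*),\sigma_i x\rangle=\langle\sigma_i\sigma_{-i}(b^*),x\rangle=\langle b^*,x\rangle .
\]
Therefore $\ket{a^*}\bra{\sigma_{-i}(b^*)}\circ\sigma_i=\ket{a^*}\bra{b^*}$, which is i).

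\textbf{Main obstacle.} The only delicate point is the sign and conjugation bookkeeping in $\sigma_z^\dagger=\sigma_{-\bar z}$, together with the conjugate-linearity of $\bra{\cdot}$ in the rank-one composition. I would double-check this with the explicit $\rho$-formula rather than via the KMS condition, to avoid an off-by-sign error. Alternatively, one can cross-check i) using the identity $\overline{M}=(M^\dagger)^{TL}$ from the proof of Theorem~\ref{undirected_equivalence}.
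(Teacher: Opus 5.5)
Your proof is correct and follows essentially the paper's route. Both arguments compute $(\ket{b}\bra{a})^{TL}=\ket{a^*}\bra{\sigma_{-i}(b^*)}$ from Proposition~\ref{conjugate and transpose}~ii) and then compose with $\sigma_{\pm i}$, using $\sigma_i^\dagger=\sigma_i$ for i). The only difference is that you verify ii) directly by composing on the left, whereas the paper deduces ii) from i) via the substitution $M\mapsto\sigma_{-i}\circ M\circ\sigma_{-i}$.
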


\begin{proof}
    Observe first that (i) and (ii) are equivalent due to the substitution $M \mapsto \sigma_{-i} \circ M \circ \sigma_{-i}$. Hence it suffices to prove (i):
    \begin{align*}
        (\ket{a}\bra{b}^\dagger)^{TL}\circ \sigma_i
        &= \ket{b}\bra{a}^{TL} \circ \sigma_i \\
        &= \ket{a^*}\bra{\sigma_{-i}(b^*)}\circ \sigma_i \\
        &= \ket{a^*}\bra{b^*}.
    \end{align*}
\end{proof}

This operator identity allows us to track how the Schur product behaves with respect to the standard Hilbert--Schmidt inner product. The next lemma provides the exact dual relations needed for this calculation.

\begin{lemma}\label{schur_conjugate}
    Let $(\mathcal{M}, \Psi)$ be a finite-dimensional quantum system and $M, N, Z\in B(L^2(\mathcal{M}, \Psi))$. Then
    $$\langle Z, M\cdot_S N\rangle_{HS}=\langle ((M^\dagger)^{TL}\circ \sigma_i) \cdot_S Z, N\rangle_{HS},$$
    $$\langle Z, N\cdot_S M\rangle_{HS}=\langle Z\cdot_S (\sigma_{-i}\circ (M^{\dagger})^{TL}), N\rangle_{HS}.$$
\end{lemma}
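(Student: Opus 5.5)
Both identities are linear in $M$, $N$, $Z$ (antilinear in $Z$ for the Hilbert--Schmidt product, and antilinear in $M$ on the right-hand side through the adjoint), so we reduce to rank-one operators: $M=\ket{a}\bra{b}$, $N=\ket{c}\bra{d}$, $Z=\ket{e}\bra{f}$. The plan is to compute both sides with Theorem~\ref{schur_product}, the preceding lemma and the Hilbert--Schmidt formula
\[
\langle \ket{x}\bra{y},\ket{u}\bra{v}\rangle_{HS}=\langle x,u\rangle_\Psi\,\langle v,y\rangle_\Psi .
\]

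For the first identity, Theorem~\ref{schur_product} gives $M\cdot_S N=\ket{ac}\bra{bd}$. Hence
\[
\langle Z, M\cdot_S N\rangle_{HS}=\langle e,ac\rangle_\Psi\,\langle bd,f\rangle_\Psi .
\]
On the other side, part (i) of the preceding lemma gives
\[
(M^\dagger)^{TL}\circ\sigma_i=\ket{a^*}\bra{b^*}.
\]
Theorem~\ref{schur_product} then yields $\ket{a^*}\bra{b^*}\cdot_S Z=\ket{a^*e}\bra{b^*f}$. Therefore
\[
\langle \ket{a^*e}\bra{b^*f},\ket{c}\bra{d}\rangle_{HS}=\langle a^*e,c\rangle_\Psi\,\langle d,b^*f\rangle_\Psi .
\]
It remains to match the two sides. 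We use $\langle a^*e,c\rangle_\Psi=\Psi(e^*ac)=\langle e,ac\rangle_\Psi$, and $\langle d,b^*f\rangle_\Psi=\Psi(d^*b^*f)=\langle bd,f\rangle_\Psi$. Both are immediate from the definition of the GNS inner product, with no modular twist needed. This settles the first identity.

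For the second identity we have $N\cdot_S M=\ket{ca}\bra{db}$. By part (ii) of the lemma,
\[
\sigma_{-i}\circ(M^\dagger)^{TL}=\ket{\sigma_{-i}(a^*)}\bra{\sigma_{-i}(b^*)},
\]
so
\[
Z\cdot_S\bigl(\sigma_{-i}\circ (M^\dagger)^{TL}\bigr)=\ket{e\sigma_{-i}(a^*)}\bra{f\sigma_{-i}(b^*)}.
\]
We then need the following two equalities:
\[
\langle e\sigma_{-i}(a^*),c\rangle_\Psi=\Psi(\sigma_i(a)e^*c)=\Psi(e^*ca)=\langle e,ca\rangle_\Psi,
\]
\[
\langle d,f\sigma_{-i}(b^*)\rangle_\Psi=\Psi(d^*f\sigma_{-i}(b^*))=\Psi(bd^*f)\ \text{(up to the correct placement)}=\langle db,f\rangle_\Psi .
\]
Both follow from the KMS condition $\Psi(xy)=\Psi(y\sigma_{-i}(x))$ together with $\sigma_{-i}(x)^*=\sigma_{i}(x^*)$.

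The main obstacle is bookkeeping in this second identity. One has to check carefully the conjugation conventions, namely which slot of the inner product is antilinear and how $\sigma_{\pm i}$ interacts with $*$. One also has to apply KMS in the right direction, for example $\Psi(d^*f\sigma_{-i}(b^*))=\Psi(b^*d^*f)$, which is what gives $\langle db,f\rangle_\Psi$. I would first verify the rank-one formula on a diagonal density $\rho$ if signs become unclear. Once the rank-one case holds, sesquilinearity extends both identities to general $M,N,Z$, since every operator is a finite sum of rank-one operators.
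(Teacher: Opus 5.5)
Your proof is correct, but it takes a somewhat different route from the paper's.

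\textbf{The paper's route.} The paper takes only $M=\ket{a}\bra{b}$ to be rank-one and keeps $N$ and $Z$ arbitrary. It rewrites $\ket{a}\bra{b}\cdot_S N=\pi_\Psi(a)N\pi_\Psi(b^*)$ via Remark~\ref{cdotpi} and moves $\pi_\Psi(b^*)$ to the front by cyclicity of the trace. It then recognizes $\pi_\Psi(b^*)Z^\dagger\pi_\Psi(a)=(\ket{a^*}\bra{b^*}\cdot_S Z)^\dagger$. The second identity is handled the same way with $\pi^{\op}_\Psi$, whose definition carries the modular twist through $\sigma_{\pm i/2}$.

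\textbf{Your route.} You decompose all three operators into rank-one pieces and factor the Hilbert--Schmidt product into two GNS inner products. You then match the factors by hand: the first identity uses only the definition of $\langle\cdot,\cdot\rangle_\Psi$, and the second uses the KMS condition together with $\sigma_{-i}(x)^*=\sigma_i(x^*)$.

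\textbf{Comparison.} The paper's argument is shorter and more structural. It shows that Schur multiplication by a rank-one operator is a bimodule sandwich, and that its Hilbert--Schmidt adjoint is again such a sandwich. Your argument is more elementary and self-contained. It needs only Theorem~\ref{schur_product}, the preceding lemma and KMS, not the sandwich formulas of Remark~\ref{cdotpi}, and it shows exactly where KMS enters.

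\textbf{Minor corrections.}
\begin{itemize}
\item In your display for the second factor, $\Psi(bd^*f)$ should read $\Psi(b^*d^*f)$, from KMS with $x=b^*$ and $y=d^*f$. You state this correctly in your last paragraph, so the hedge ``up to the correct placement'' can go.
\item The right-hand sides are linear, not antilinear, in $M$: the antilinearity of $M\mapsto M^\dagger$ cancels against the antilinear first slot of $\langle\cdot,\cdot\rangle_{HS}$. This matches the left-hand sides. In any case, the rank-one reduction only needs additivity in each argument, since scalars can be absorbed into the vectors.
\end{itemize}
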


\begin{proof}
    Without loss of generality, assume that $M=\ket{a}\bra{b}$. Then:
    \begin{align*}
        \langle Z, \ket{a}\bra{b}\cdot_S N\rangle_{HS}
        &=\langle Z, \pi_\Psi(a)N\pi_\Psi(b^*)\rangle_{HS}\\
        &=\mathrm{Tr}(Z^\dagger\pi_\Psi(a)N\pi_\Psi(b^*))\\
        &=\mathrm{Tr}(\pi_\Psi(b^*)Z^\dagger \pi_\Psi(a)N)\\
        &=\mathrm{Tr}(\ket{b^*}\bra{a^*}\cdot_S Z^\dagger N)\\
        &=\mathrm{Tr}((\ket{a^*}\bra{b^*}\cdot_S Z)^\dagger N)\\
        &=\langle \ket{a^*}\bra{b^*}\cdot_S Z, N\rangle_{HS}\\
        &=\langle ((\ket{a}\bra{b}^\dagger)^{TL}\circ \sigma_i)\cdot_S Z, N\rangle_{HS}.
    \end{align*}
    Similarly,
    \begin{align*}
        \langle Z, N\cdot_S M\rangle_{HS}
        &=\mathrm{Tr}(Z^\dagger N\cdot_S \ket{a}\bra{b})\\
        &=\mathrm{Tr}(Z^\dagger \cdot_S \ket{\sigma_{-i}(b^*)}\bra{\sigma_{-i}(a^*)} N)\\
        &=\langle Z\cdot_S(\sigma_{-i}\circ \ket{a^*}\bra{\sigma_{-i}(b^*)}), N\rangle_{HS}\\
        &=\langle Z\cdot_S(\sigma_{-i}\circ (\ket{a}\bra{b}^\dagger)^{TL}), N\rangle_{HS}.
    \end{align*}
\end{proof}

Having established these identities, we are now in a position to determine the Hilbert--Schmidt adjoint of $P_A$ and of its opposite counterpart $P_A'$, introduced below.

\begin{theorem} \label{not_orthogonal}
Let $A$ be the adjacency operator of a quantum graph $(\mathcal{M}, \Psi, A)$ and let $P_A, P_A':B(L^2(\mathcal{M}, \Psi))\to B(L^2(\mathcal{M}, \Psi))$ be defined by:
$$P_A(X)=\delta^{-2}A\cdot_S X, \qquad P_A'(X)=\delta^{-2}X\cdot_S A.$$
Then
$$P_A^\dagger=P_{A\circ \sigma_i}, \qquad (P_A')^\dagger=P_{\sigma_{-i}\circ A}',$$
where the adjoint is taken with respect to the Hilbert--Schmidt scalar product.
\end{theorem}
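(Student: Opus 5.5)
The plan is to read off the adjoints directly from Lemma~\ref{schur_conjugate}, using the symmetry properties of the adjacency operator of a quantum graph to simplify the resulting expression $(A^\dagger)^{TL}$.

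First, for $P_A$: take $M=A$ in the first identity of Lemma~\ref{schur_conjugate}. For all $Z,X$ this gives
\[
\langle Z, P_A(X)\rangle_{HS}=\delta^{-2}\langle Z, A\cdot_S X\rangle_{HS}=\delta^{-2}\langle ((A^\dagger)^{TL}\circ\sigma_i)\cdot_S Z, X\rangle_{HS}.
\]
So $P_A^\dagger(Z)=\delta^{-2}((A^\dagger)^{TL}\circ\sigma_i)\cdot_S Z=P_{(A^\dagger)^{TL}\circ\sigma_i}(Z)$. It then suffices to show that $(A^\dagger)^{TL}=A$.

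By the corollary following Theorem~\ref{undirected_equivalence}, $A^\dagger=A^{TR}$ for any quantum graph. Hence $(A^\dagger)^{TL}=(A^{TR})^{TL}=A$ by Lemma~\ref{lemma:trans}. Alternatively, since $A$ is $*$-preserving, the identity $\overline{M}=(M^\dagger)^{TL}$ from the proof of Theorem~\ref{undirected_equivalence} gives $(A^\dagger)^{TL}=\overline{A}=A$ directly; this route uses only condition \eqref{A:real}. Either way, $P_A^\dagger=P_{A\circ\sigma_i}$.

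Second, for $P_A'$: apply the second identity of Lemma~\ref{schur_conjugate} with $M=A$ and $N=X$:
\[
\langle Z, X\cdot_S A\rangle_{HS}=\langle Z\cdot_S(\sigma_{-i}\circ (A^\dagger)^{TL}), X\rangle_{HS}.
\]
Substituting $(A^\dagger)^{TL}=A$ as before yields $(P_A')^\dagger(Z)=\delta^{-2}Z\cdot_S(\sigma_{-i}\circ A)=P'_{\sigma_{-i}\circ A}(Z)$.

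There is no real obstacle, only bookkeeping. The one point to check carefully is that Lemma~\ref{schur_conjugate} is stated as an identity for general $M$: it is proven for rank-one $M=\ket{a}\bra{b}$ and extends by linearity. That extension is fine because both sides are linear in $M$; in particular $M\mapsto (M^\dagger)^{TL}$ is linear, being a composition of two antilinear maps. I would also note that only the $*$-preserving property of $A$ is needed, not the full quantum graph axioms.
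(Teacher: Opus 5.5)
Your proof is correct and is essentially the paper's argument: the paper also reads both adjoints off Lemma~\ref{schur_conjugate} and uses Theorem~\ref{undirected_equivalence} together with the $*$-preserving property of $A$ to replace $(A^\dagger)^{TL}$ by $A$; your remark that the other quantum-graph axioms are not needed is accurate. One small slip in your side remark: $M\mapsto M^{TL}$ is linear, so $M\mapsto (M^\dagger)^{TL}$ is antilinear, not linear. The right-hand side of Lemma~\ref{schur_conjugate} is nonetheless linear in $M$, because the Hilbert--Schmidt product is antilinear in its first slot, so your conclusion that the rank-one case suffices still holds.
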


\begin{proof}
    The statement follows directly from Lemma~\ref{schur_conjugate}, Theorem \ref{undirected_equivalence}, and the fact that $A$ is real.
\end{proof}

It turns out that a modular modification of the Hilbert--Schmidt scalar product gives rise to selfadjointness of $P_A$.   

\begin{definition} \label{R_L_scalar_product}
    We define the \emph{left} and \emph{right} scalar products on $B(L^2(\mathcal{M}, \Psi))$ by:
    $$\langle X,Y\rangle_L=\mathrm{Tr}((\sigma_i \circ X)^\dagger Y), \quad \langle X, Y\rangle_R=\mathrm{Tr}((X \circ \sigma_{-i})^\dagger Y),$$
    for all $X,Y\in B(L^2(\mathcal{M}, \Psi))$.
    The corresponding adjoints will be denoted by ${}^{\dagger_L}$ and ${}^{\dagger_R}$, respectively.
\end{definition}

\begin{remark}
To verify that these expressions define genuine inner products, it suffices to use the identity $\sigma_t^\dagger=\sigma_t$ for all $t\in\mathbb{C}$. Indeed, these inner products may be viewed as standard inner products twisted by half-step modular shifts $\sigma_{i/2}$:
    $$\langle X, Y\rangle_L= \langle \sigma_{i/2}\circ X, \sigma_{i/2} \circ Y\rangle_{HS}, \quad \langle X, Y\rangle_R= \langle X\circ \sigma_{-i/2}, Y\circ \sigma_{-i/2}\rangle_{HS}.$$
\end{remark}
\begin{theorem} \label{orthogonal_P_A}
    We have $P_A^{\dagger_L}=P_A$ and $(P'_A)^{\dagger_R}=P_A'$.
\end{theorem}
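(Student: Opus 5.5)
The plan is to express the twisted adjoints through the Hilbert--Schmidt adjoint, and then reduce to Theorem~\ref{not_orthogonal} together with the commutation of $A$ with the modular group.

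\textbf{Step 1: general formula for the left adjoint.} For any $T$ acting on $B(L^2(\mathcal{M},\Psi))$, write $\Lambda(X)=\sigma_{i/2}\circ X$. The remark after Definition~\ref{R_L_scalar_product} gives $\langle X,Y\rangle_L=\langle \Lambda X,\Lambda Y\rangle_{HS}$. Since $\sigma_t^\dagger=\sigma_t$, the HS-adjoint of $\Lambda$ is $\Lambda$ itself, so $\Lambda^\dagger\Lambda(X)=\sigma_i\circ X$. Hence
\[
T^{\dagger_L}=(\Lambda^\dagger\Lambda)^{-1}\,T^\dagger\,(\Lambda^\dagger\Lambda),
\qquad\text{i.e.}\qquad
T^{\dagger_L}(X)=\sigma_{-i}\circ T^\dagger(\sigma_i\circ X).
\]
Analogously, writing $R(X)=X\circ\sigma_{-i}$, we get $T^{\dagger_R}(X)=T^\dagger(X\circ\sigma_{-i})\circ\sigma_i$.

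\textbf{Step 2: the left case.} Apply Step 1 with $T=P_A$. By Theorem~\ref{not_orthogonal}, $P_A^\dagger=P_{A\circ\sigma_i}$, so
\[
P_A^{\dagger_L}(X)=\delta^{-2}\,\sigma_{-i}\circ\bigl((A\circ\sigma_i)\cdot_S(\sigma_i\circ X)\bigr).
\]
The key identity to establish is that left composition with an automorphism distributes over the Schur product:
\[
\alpha\circ(M\cdot_S N)=(\alpha\circ M)\cdot_S(\alpha\circ N)
\]
for $\alpha=\sigma_z$. This holds because $\sigma_z$ is multiplicative, so $\sigma_z\circ m=m\circ(\sigma_z\otimes\sigma_z)$. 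It can also be checked on rank-one operators using Theorem~\ref{schur_product}: $\sigma_z\ket{ac}\bra{bd}=\ket{\sigma_z(a)\sigma_z(c)}\bra{bd}$. With this identity,
\[
P_A^{\dagger_L}(X)=\delta^{-2}(\sigma_{-i}\circ A\circ\sigma_i)\cdot_S X .
\]
By the corollary after Lemma~\ref{sigma_commuting}, $A$ commutes with $\sigma_t$ for all $t\in\CC$, so $\sigma_{-i}\circ A\circ\sigma_i=A$, and therefore $P_A^{\dagger_L}=P_A$.

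\textbf{Step 3: the right case.} Here the dual identity is needed:
\[
(M\cdot_S N)\circ\beta=(M\circ\beta)\cdot_S(N\circ\beta)
\]
for $\beta=\sigma_z$. This follows from $m^\dagger\circ\sigma_z=(\sigma_z\otimes\sigma_z)\circ m^\dagger$, which is obtained by taking adjoints of the previous relation and using $\sigma_z^\dagger=\sigma_{\bar z}$ (valid for $z$ real or purely imaginary, which covers our case). Using $(P_A')^\dagger=P'_{\sigma_{-i}\circ A}$ from Theorem~\ref{not_orthogonal}, we get
\[
(P_A')^{\dagger_R}(X)=\delta^{-2}\,X\cdot_S(\sigma_{-i}\circ A\circ\sigma_i)=P'_A(X),
\]
again by commutation of $A$ with the modular group.

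\textbf{Main obstacle.} The step most prone to error is Step 1, specifically getting the correct signs of the modular parameters in $T^{\dagger_L}$ so that they exactly cancel the twist $A\circ\sigma_i$ produced by Theorem~\ref{not_orthogonal}. I would verify this on a rank-one $X=\ket{a}\bra{b}$ with $A=\ket{c}\bra{d}$ as a sanity check. The compatibility of $m$ and $m^\dagger$ with $\sigma_{\pm i}$ is the only other nontrivial ingredient.
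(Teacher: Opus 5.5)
Your argument is correct and is essentially the paper's proof. Both reduce to Lemma~\ref{schur_conjugate} (you reach it through its consequence Theorem~\ref{not_orthogonal}), to the commutation of $A$ with $\sigma_{\pm i}$, and to the compatibility of left and right composition by $\sigma_z$ with the Schur product; the only difference is that you isolate the twist as $T^{\dagger_L}=G^{-1}T^{\dagger}G$ with $G(X)=\sigma_i\circ X$, where the paper computes it inline. One slip: with the paper's conventions $\sigma_z^\dagger=\sigma_{-\bar z}$, not $\sigma_{\bar z}$ (so $\sigma_{\pm i}$ and $\sigma_{i/2}$ are self-adjoint, as you correctly use in Step~1), but this does not affect the identity $m^\dagger\circ\sigma_z=(\sigma_z\otimes\sigma_z)\circ m^\dagger$ needed in Step~3.
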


\begin{proof}
    Using Lemma~\ref{schur_conjugate}, we compute:
    \begin{align*}
        \langle X, P_A Y\rangle_L
        &=\delta^{-2}\langle \sigma_i\circ X, A\cdot_S Y \rangle_{HS} \\
        &=\delta^{-2}\langle(\sigma_i \circ A )\cdot_S (\sigma_i\circ X), Y\rangle_{HS}\\
        &=\delta^{-2}\langle \sigma_i \circ (A\cdot_S X), Y\rangle_{HS}\\
        &=\langle P_A X , Y \rangle_L,
    \end{align*}
    and similarly for the right action,
    \begin{align*}
        \langle X, P_A' Y\rangle_R
        &=\delta^{-2}\langle X\circ \sigma_{-i}, Y\cdot_S A \rangle_{HS}\\
        &=\delta^{-2}\langle(X \circ \sigma_{-i})\cdot_S (A\circ \sigma_{-i}), Y\rangle_{HS}\\
        &=\delta^{-2}\langle(X \cdot_S A)\circ \sigma_{-i}, Y\rangle_{HS}\\
        &=\langle P_A' X, Y\rangle_R.
    \end{align*}
\end{proof}

Let us also observe the following. 

\begin{proposition}
    Let $(\mathcal{M}, \Psi, A)$ be a quantum graph. Then $\pi^\op$ is an isometry into $(B(L^2(\mathcal{M}, \Psi)), \langle\cdot, \cdot \rangle_L)$ and $\delta^{-1}\pi$ is an isometry into $(B(L^2(\mathcal{M}, \Psi)), \langle\cdot, \cdot \rangle_R)$.
\end{proposition}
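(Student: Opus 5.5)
The approach is a direct computation. I will evaluate the two twisted inner products of Definition~\ref{R_L_scalar_product} on images of $\pi^\op$ and $\pi$ by writing the trace in an orthonormal basis $(\xi_j)_{j\in J}$ of $L^2(\mathcal{M},\Psi)$. The basis sums that appear will then be collapsed using Corollary~\ref{xi_j_xi_j*}. Throughout I use that $\sigma_z^\dagger=\sigma_z$, so that $(\sigma_i\circ X)^\dagger=X^\dagger\circ\sigma_i$ and $(X\circ\sigma_{-i})^\dagger=\sigma_{-i}\circ X^\dagger$. I also use that $\pi,\pi^\op$ are $*$-homomorphisms, so $\pi^\op(a)^\dagger=\pi^\op(a^*)$ and $\pi(a)^\dagger=\pi(a^*)$.

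\emph{Left product and $\pi^\op$.} For $a,b\in\mathcal{M}^\op$ I write
$\langle \pi^\op(a),\pi^\op(b)\rangle_L=\Tr(\pi^\op(a^*)\,\sigma_i\,\pi^\op(b))=\sum_j\Psi\bigl(\xi_j^*\,\sigma_i(\xi_j\sigma_{-i/2}(b))\,\sigma_{-i/2}(a^*)\bigr)$.
Since $\sigma_i$ is multiplicative, the summand equals $\Psi(\xi_j^*\sigma_i(\xi_j)\,c)$ with $c=\sigma_{i/2}(b)\sigma_{-i/2}(a^*)$, which no longer depends on $j$. Applying $\sigma_i$ to the identity $\sum_j\sigma_{-i}(\xi_j^*)\xi_j=\delta^2\I$ of Corollary~\ref{xi_j_xi_j*} gives $\sum_j\xi_j^*\sigma_i(\xi_j)=\delta^2\I$. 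The expression therefore reduces to $\delta^2\Psi(\sigma_{i/2}(b)\sigma_{-i/2}(a^*))$.

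To recognise this as an inner product, I will use the KMS condition together with $\Psi\circ\sigma_t=\Psi$. This rewrites the scalar as $\Psi(\sigma_{-i/2}(a^*)\sigma_{-i/2}(b))=\Psi(a^*b)$. I then compare it with $\langle a,b\rangle_{\Psi^\op}=\Psi(ba^*)$, transported along the unitary $\iota$ from the lemma on $\iota:L^2(\mathcal{M},\Psi)\to L^2(\mathcal{M}^\op,\Psi^\op)$.

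\emph{Right product and $\pi$.} For $a,b\in\mathcal{M}$ I write
$\langle \pi(a),\pi(b)\rangle_R=\Tr(\sigma_{-i}\,\pi(a^*b))$.
Expanding $\Tr(\sigma_{-i}\pi(x))$ directly in a basis is awkward, so I will use Proposition~\ref{LR_representation}(i), which expresses $\pi(x)=m(\ket{x}\bra{\I}\otimes\mathrm{id})m^\dagger$. Alternatively I will take the trace in the basis $\sigma_{i}(\xi_j)$, suitably normalised, so that the sum becomes $\sum_j\Psi(\xi_j^* x\,\xi_j)$-type expressions. Collapsing these with $\sum_j\xi_j\xi_j^*=\delta^2\I$ from Corollary~\ref{xi_j_xi_j*}, together with the trace-like behaviour of $\Psi$ under the modular group, should produce $\delta^2\Psi(a^*b)=\delta^2\langle a,b\rangle_\Psi$. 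Hence $\delta^{-1}\pi$ is isometric.

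\emph{Main obstacle.} The main obstacle is bookkeeping of the modular twists and of the power of $\delta$. One must check which of the two sums in Corollary~\ref{xi_j_xi_j*} appears in each case. One must also check that the factor $\delta^2$ produced in the $\pi^\op$ case is absorbed by the normalisation of the target space, exactly as the statement distinguishes $\pi^\op$ from $\delta^{-1}\pi$. If the constants do not match on first pass, I would recheck the expansion of $\Tr$ in a non-orthonormal but $\sigma$-adapted basis, and recheck the convention $\pi^\op(a)b=b\sigma_{-i/2}(a)$. The rest is routine once these identities are in place.
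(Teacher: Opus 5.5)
Your computation for $\pi^{\op}$ is correct and is essentially the paper's own argument. You expand the trace in an orthonormal basis and collapse the sum with $\sum_j\sigma_{-i}(\xi_j^*)\xi_j=\delta^2\I$ (applying $\sigma_i$ to this identity first changes nothing). This gives $\langle \pi^{\op}(a),\pi^{\op}(b)\rangle_L=\delta^2\Psi(a^*b)=\delta^2\langle a,b\rangle_\Psi$.

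The gap is your plan to make this $\delta^2$ disappear. Nothing can absorb it:
\begin{itemize}
\item $\langle\cdot,\cdot\rangle_L$ is defined with no free normalisation.
\item Transporting along $\iota$ cannot rescale anything, because $\iota$ is unitary.
\item Moreover, $\langle a,b\rangle_{\Psi^{\op}}=\Psi(ba^*)$ is not even proportional to $\Psi(a^*b)$ unless $\Psi$ is tracial, so that comparison would move you further away.
\end{itemize}
In fact the first half of the statement is false as printed whenever $\delta\neq 1$. Take $\mathcal{M}=\C([n])$ with the uniform state (and, say, $A=\mathrm{id}$). Then $\delta^2=n$, the modular group is trivial, $\pi^{\op}=\pi$, and $\langle\cdot,\cdot\rangle_L=\langle\cdot,\cdot\rangle_R=\langle\cdot,\cdot\rangle_{HS}$. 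The two halves of the statement together would then force $\|a\|_\Psi^2=n\|a\|_\Psi^2$.

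The paper's own displayed computation also ends at $\delta^2\langle a,b\rangle_\Psi$. What it, and you, actually establish is that $\delta^{-1}\pi^{\op}$ is an isometry, with $\langle\cdot,\cdot\rangle_\Psi$ on the domain. Record that conclusion rather than search for a normalisation that is not there.

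For $\pi$, your target identity is right, but you never carry out the computation. The detour through a ``suitably normalised'' basis $\sigma_i(\xi_j)$ is unnecessary and risky: that basis is not orthonormal, so the trace would require the dual basis. The direct expansion closes in one line. Use the KMS condition in the form $\Psi(uv)=\Psi(\sigma_i(v)u)$ together with $\Psi\circ\sigma_{-i}=\Psi$:
\[
\Tr(\sigma_{-i}\pi(x))=\sum_j\Psi\bigl(\xi_j^*\sigma_{-i}(x)\sigma_{-i}(\xi_j)\bigr)=\sum_j\Psi\bigl(\xi_j\xi_j^*\sigma_{-i}(x)\bigr)=\delta^2\Psi(x).
\]
With $x=a^*b$ this gives $\langle\pi(a),\pi(b)\rangle_R=\delta^2\langle a,b\rangle_\Psi$, so $\delta^{-1}\pi$ is isometric, as claimed. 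The same factor $\delta^2$ appears in both halves, which is further evidence that both halves of the statement should carry the same normalisation $\delta^{-1}$.
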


\begin{proof}
    Let $a,b\in \mathcal{M}^\op$ and let $(\xi_j)$ be an orthonormal basis of $L^2(\mathcal{M}, \Psi)$. Then:
    \begin{align*}
        \langle \pi^\op(a), \pi^\op(b)\rangle_L
        &=\langle \sigma_i \circ \pi^\op(a), \pi^\op(b)\rangle_{HS}\\
        &=\sum_j \langle (\sigma_i\circ \pi^\op(a))\xi_j, \pi^\op(b)\xi_j \rangle_\Psi\\
        &=\sum_j\langle\sigma_i(\xi_j)\sigma_{i/2}(a), \xi_j \sigma_{-i/2}(b)\rangle_\Psi\\
        &=\Psi\left(\sigma_{-i/2}(a^*)\sum_j \sigma_{-i}(\xi_j^*)\xi_j \sigma_{-i/2}(b)\right)\\
        &=\delta^2\Psi(\sigma_{-i/2}(a^*)\sigma_{-i/2}(b))\\
        &=\delta^2\langle a, b \rangle_\Psi.
    \end{align*}
    The second statement follows analogously.
\end{proof}
Let us observe that the edge space $S_L(G)$ is a closed $\pi^{\op}$-bimodule, precisely as required in Weaver's framework. By identifying $\mathcal{M}$ with its image under $\pi_\Psi$ and using the relation
\[
\pi_\Psi(\mathcal{M})'=\pi_\Psi^{\op}(\mathcal{M}^{\op}),
\]
we recover Weaver's original definition of a quantum graph~\cite{NW2015}. By symmetry, an entirely analogous construction can be carried out for the right action, yielding the corresponding dual bimodule structure.

\begin{corollary} \label{M bimodule}
    The right edge space $S_R(G)\subseteq B(L^2(\mathcal{M}, \Psi))$ is a closed $\pi_\Psi(\mathcal{M})$--$\pi_\Psi(\mathcal{M})$ bimodule with respect to the right scalar product.
\end{corollary}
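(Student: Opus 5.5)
We need to show that $S_R(G)=P_A'(B(L^2(\mathcal{M},\Psi)))$ is invariant under left and right multiplication by elements of $\pi_\Psi(\mathcal{M})$. Closedness is automatic in finite dimensions, and the phrase ``with respect to the right scalar product'' should be read as: $S_R(G)$ is the range of the $\langle\cdot,\cdot\rangle_R$-orthogonal projection $P_A'$ (Theorem~\ref{orthogonal_P_A}). So the substantive point is the bimodule property.

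Fix $a,c\in\mathcal{M}$ and $X\in B(L^2(\mathcal{M},\Psi))$. I will show
\[
\pi_\Psi(a)\,\bigl(X\cdot_S A\bigr)\,\pi_\Psi(c)=\bigl(\pi_\Psi(a)X\pi_\Psi(c)\bigr)\cdot_S A .
\]
Then $\pi_\Psi(a)P_A'(X)\pi_\Psi(c)=P_A'(\pi_\Psi(a)X\pi_\Psi(c))\in S_R(G)$. Since $\I\in\mathcal{M}$, this covers one-sided multiplication as well.

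To prove the identity, I will use Remark~\ref{cdotpi}(i), which gives $\pi_\Psi(a)Y\pi_\Psi(c)=\ket{a}\bra{c^*}\cdot_S Y$ for any $Y$. The left-hand side then becomes $\ket{a}\bra{c^*}\cdot_S(X\cdot_S A)$. By associativity of the Schur product (Corollary after Theorem~\ref{schur_product}), this equals $(\ket{a}\bra{c^*}\cdot_S X)\cdot_S A$. Applying Remark~\ref{cdotpi}(i) again to the inner factor gives exactly the right-hand side.

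As an alternative check, one can expand $X=\sum\ket{x}\bra{y}$ and $A=\sum\ket{u}\bra{v}$ and use Theorem~\ref{schur_product}. The operator $\pi_\Psi(a)\ket{xu}\bra{yv}\pi_\Psi(c)$ equals $\ket{axu}\bra{c^*yv}$, because $\pi_\Psi(c)^\dagger=\pi_\Psi(c^*)$. This is precisely $\ket{ax}\bra{c^*y}\cdot_S\ket{u}\bra{v}$.

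The point I expect to need the most care is the direction of the actions. Left and right actions of $\pi_\Psi(\mathcal{M})$ are both compatible with placing $A$ in the \emph{right} tensor slot, because the Schur product multiplies first-slot vectors on the left. In contrast, $S_L(G)$ is a $\pi^{\op}_\Psi$-bimodule by Remark~\ref{cdotpi}(ii). I should also verify the adjoint convention $\bra{c^*}$ versus $\bra{c}$ in Remark~\ref{cdotpi}(i); the expansion check above settles it.

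Finally, I would note that $S_R(G)$ is stable under $\dagger$ up to the modular twist. This is not needed for the bimodule claim, so I would omit it.
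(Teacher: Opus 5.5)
Your proof is correct and takes essentially the paper's route. The paper gives no separate proof and obtains the statement ``by symmetry'' from the $S_L$ case, where the bimodule property comes from Remark~\ref{cdotpi} plus associativity of the Schur product, and the closedness/orthogonality from Theorem~\ref{orthogonal_P_A}; this matches your identity $\pi_\Psi(a)(X\cdot_S A)\pi_\Psi(c)=(\ket{a}\bra{c^*}\cdot_S X)\cdot_S A$, and your check of the $\bra{c^*}$ convention agrees with Remark~\ref{cdotpi}(i).
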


These parallel constructions give rise to two distinct subspaces of $B(L^2(\mathcal{M},\Psi))$, each of which characterizes the same underlying quantum graph via its associated generators. Nevertheless, the spaces $S_L$ and $S_R$ need not coincide in general, as the following example illustrates.

\begin{example}
    Let $\left(\mathrm{Mat}_2, 2 \mathrm{Tr}, A^{(1B)}_{0,0}\right)$ be a unital, tracial quantum graph, where
    $$2A^{(1B)}_{0,0}=\ket{E_{11}}\bra{E_{22}}+\ket{E_{12}}\bra{E_{21}}+\ket{E_{21}}\bra{E_{12}}+\ket{E_{22}}\bra{E_{11}},$$
    or equivalently,
    $$A^{(1B)}_{{0,0}}=\left[\begin{array}{cccc}
        0 & 0 & 0 & 1 \\
        0 & 0 & 1 & 0 \\
        0 & 1 & 0 & 0 \\
        1 & 0 & 0 & 0 \\
    \end{array}\right]$$
    with respect to the standard basis $\{E_{11}, E_{12}, E_{21}, E_{22}\}$. In this picture, we have the identities $\pi(a)=a\otimes \mathrm{id}$ and $\pi^\op(a)=\mathrm{id} \otimes a^T$, which yields:
    $$S_L=\begin{bmatrix}
        0&1\\1&0
    \end{bmatrix}\otimes \mathrm{Mat}_2, \quad S_R=\mathrm{Mat}_2 \otimes \begin{bmatrix}
        0&1\\1&0
    \end{bmatrix}.$$
\end{example}

These results show that the abstract Weaver definition of a graph becomes significantly more concrete if we treat $\mathcal{M}$ as a $*$-subalgebra of $B(L^2(\mathcal{M}, \Psi))$. Indeed, any $\pi(\mathcal{M})' $--$ \pi(\mathcal{M})'$ bimodule $S_L$ is generated by an adjacency operator $A$ satisfying $A \cdot_S A = \delta^2 A$. 

Crucially, this observation shows that, given $A$, we can construct a genuine $\pi(\mathcal{M})$--$\pi(\mathcal{M})$ bimodule, thereby completely avoiding the potentially delicate use of the opposite algebra $\mathcal{M}^{\op}$. This naturally raises the question of whether a quantum graph can be defined directly as a $\pi(\mathcal{M})$--$\pi(\mathcal{M})$ bimodule. To establish this formulation, we need an explicit correspondence that describes how the two representations are transformed into one another via the transposition operations.

\begin{lemma}\label{pi()TL}
We have:
    $$\pi(a)^{TL}=\pi^{\op}(\sigma_{i/2}(a)), \quad \pi(a)^{TR}=\pi^{\op}(\sigma_{-i/2}(a)),$$
    $$(\pi^\op(b))^{TL}=\pi(\sigma_{i/2}(b)), \quad (\pi^\op(b))^{TR}=\pi(\sigma_{-i/2}(b)),$$
    for all $a\in \mathcal{M}$ and $b\in \mathcal{M}^\op$.
\end{lemma}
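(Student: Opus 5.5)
The plan is to write $\pi(a)$ and $\pi^\op(b)$ as Schur products involving the identity operator. Then the anti-multiplicativity of the transpositions with respect to $\cdot_S$ (Proposition~\ref{Transpose_features}\,i)) moves the transposition onto a rank-one factor, where Proposition~\ref{conjugate and transpose} computes it explicitly.

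\textbf{Step 1: Schur-product representations.} Taking $B=\ket{a}\bra{\I}$ in Proposition~\ref{LR_representation}\,i) gives
\[
\pi(a)=m(\ket{a}\bra{\I}\otimes\mathrm{id})m^\dagger=\ket{a}\bra{\I}\cdot_S \mathrm{id}.
\]
Likewise, part ii) with $B=\ket{\I}\bra{c}$ gives
\[
\mathrm{id}\cdot_S\ket{\I}\bra{c}=\pi^\op(\sigma_{-i/2}(c^*)).
\]

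\textbf{Step 2: the identity is invariant under both transpositions.} Expand $\mathrm{id}=\sum_j\ket{\xi_j}\bra{\xi_j}$ in an orthonormal basis. Proposition~\ref{conjugate and transpose} gives
\[
\mathrm{id}^{TL}=\sum_j\ket{\xi_j^*}\bra{\sigma_{-i}(\xi_j^*)}=\mathrm{id}
\]
by the first lemma of Section~\ref{sec:Schur}. The same holds for $TR$.

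\textbf{Step 3: the first identity.} By Steps 1 and 2 and Proposition~\ref{Transpose_features}\,i),
\[
\pi(a)^{TL}=\mathrm{id}^{TL}\cdot_S(\ket{a}\bra{\I})^{TL}=\mathrm{id}\cdot_S\ket{\I}\bra{\sigma_{-i}(a^*)}.
\]
Applying Step 1 with $c=\sigma_{-i}(a^*)$ turns this into $\pi^\op\bigl(\sigma_{-i/2}(\sigma_{-i}(a^*)^*)\bigr)$.

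Here the one point needing care is the adjoint rule for the analytically continued modular group. From $\sigma_z(x)=\rho^{iz}x\rho^{-iz}$ one gets $\sigma_z(x)^*=\sigma_{\bar z}(x^*)$. Hence $\sigma_{-i}(a^*)^*=\sigma_i(a)$, and therefore $\pi(a)^{TL}=\pi^\op(\sigma_{-i/2}\sigma_i(a))=\pi^\op(\sigma_{i/2}(a))$. The main risk is a sign slip in these complex parameters, so I would cross-check the result in the tracial case, where everything reduces to $\sigma=\mathrm{id}$.

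\textbf{Step 4: the remaining three identities.} By Lemma~\ref{lemma:trans}, $TR$ is the inverse of $TL$. Applying $TR$ to the first identity with $a$ replaced by $\sigma_{-i/2}(b)$ gives
\[
\pi^\op(b)^{TR}=\pi(\sigma_{-i/2}(b)).
\]
For $\pi(a)^{TR}$, I would repeat Step 3 with $TR$, using $(\ket{a}\bra{\I})^{TR}=\ket{\I}\bra{a^*}$. This gives $\mathrm{id}\cdot_S\ket{\I}\bra{a^*}=\pi^\op(\sigma_{-i/2}(a))$. Inverting this with $TL$ yields the last formula, $\pi^\op(b)^{TL}=\pi(\sigma_{i/2}(b))$.
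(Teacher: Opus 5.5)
Your proof is correct, and the checks behind each step hold:
\begin{itemize}
\item $\ket{a}\bra{\I}\cdot_S\mathrm{id}=\sum_j\ket{a\xi_j}\bra{\xi_j}=\pi(a)$;
\item $\mathrm{id}\cdot_S\ket{\I}\bra{c}=\sum_j\ket{\xi_j}\bra{\xi_j c}=\pi^{\op}(\sigma_{-i/2}(c^*))$, since both sides send $y$ to $y\sigma_{-i}(c^*)$;
\item $\mathrm{id}$ is fixed by both transpositions;
\item the rule $\sigma_z(x)^*=\sigma_{\bar z}(x^*)$ gives the right signs.
\end{itemize}

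Your route is organized differently from the paper's. The paper proves the first identity by a direct basis computation. It writes $\pi(a)=\sum_j\ket{\xi_j}\bra{a^*\xi_j}$ and transposes termwise to get $\sum_j\ket{\xi_j^*a}\bra{\sigma_{-i}(\xi_j^*)}$. It then pulls out $\pi^{\op}(\sigma_{i/2}(a))$ on the left and uses $\sum_j\ket{\xi_j^*}\bra{\sigma_{-i}(\xi_j^*)}=\mathrm{id}$.

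For $\pi(a)^{TR}$ the paper does not repeat the computation. It uses the adjoint rule of Proposition~\ref{Transpose_features}\,iv) and the $*$-homomorphism property of $\pi$ and $\pi^{\op}$: $\pi(a)^{TR}=(\pi(a^*)^{TL})^\dagger=\pi^{\op}(\sigma_{i/2}(a^*))^\dagger$. The last two identities are obtained by inversion via Lemma~\ref{lemma:trans}, exactly as you do.

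Your version moves the basis bookkeeping into results already established: Proposition~\ref{LR_representation} and the anti-multiplicativity in Proposition~\ref{Transpose_features}\,i). The whole lemma then reduces to the single rank-one computation $(\ket{a}\bra{\I})^{TL}=\ket{\I}\bra{\sigma_{-i}(a^*)}$.

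It also shows why the lemma holds. A transposition swaps the two Schur factors. A rank-one factor placed to the left of $\mathrm{id}$ yields an element of $\pi(\mathcal{M})$, while one placed to the right yields an element of $\pi^{\op}(\mathcal{M}^{\op})$. Your approach also treats $TL$ and $TR$ symmetrically, without needing the adjoint rule.

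The paper's computation, in turn, is shorter and more self-contained: it needs only the rank-one transposition formula and the basis identity.
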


\begin{proof}
    Let $(\xi_j)_{j}$ be an orthonormal basis of $L^2(\mathcal{M}, \Psi)$. Then:
    \begin{align*}
        (\pi(a))^{TL}&=\sum_j \left(\ket{\xi_j}\bra{a^*\xi_j}\right)^{TL}=\sum_j \ket{\xi_j^*a}\bra{\sigma_{-i}(\xi_j^*)}=\pi^\op(\sigma_{i/2}(a))\sum_j \ket{\xi_j^*}\bra{\sigma_{-i}(\xi_j^*)}=\pi^\op(\sigma_{i/2}(a)),
    \end{align*}
    \begin{align*}
        (\pi(a))^{TR}&=(\pi(a^*)^{TL})^{\dagger}=\pi^\op(\sigma_{i/2}(a^*))^\dagger=\pi^\op(\sigma_{-i/2}(a)),
    \end{align*}
    \begin{align*}
        (\pi^\op(b))^{TR}&=(\pi(\sigma_{-i/2}(b))^{TL})^{TR}=\pi(\sigma_{-i/2}(b)),
    \end{align*}
    and
    \begin{align*}
        (\pi^\op(b))^{TL}&=(\pi(\sigma_{i/2}(b))^{TR})^{TL}=\pi(\sigma_{i/2}(b)).
    \end{align*}
\end{proof}

This structural correspondence provides a direct identification between the left and right edge spaces through the generalized transposition map, showing that these spaces are isometrically isomorphic.

\begin{proposition}\label{S_R-S_L-equivalence}
    Let $(\mathcal{M}, \Psi, A)$ be a quantum graph with some $\delta$, and let $S_L (S_R)$ be the left (right) edge space, respectively. Then $\overline{S_L}:=(S_L^\dagger)^{TL}=S_R$ and $\overline{S_R}=S_L$.
\end{proposition}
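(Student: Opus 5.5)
The plan is to work on spanning sets and use the "conjugation" map $\overline{M}(a)=M(a^*)^*$ from the proof of Theorem~\ref{undirected_equivalence}. That proof shows $\overline{M}=(M^\dagger)^{TL}$, so $\overline{S_L}$ is exactly the image of $S_L$ under the antilinear involution $M\mapsto\overline{M}$. The claim $\overline{S_L}=S_R$ then reduces to computing $\overline{A\cdot_S X}$ for arbitrary $X$ and recognising it as $\overline{X}\cdot_S A$ up to a relabelling of $X$.

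\textbf{Key step.} I will show that $\overline{M\cdot_S N}=\overline{N}\cdot_S\overline{M}$. By Theorem~\ref{schur_product} this is checked on rank-one operators. Let $M=\ket{a}\bra{b}$ and $N=\ket{c}\bra{d}$. The computation in the proof of Theorem~\ref{undirected_equivalence} gives $\overline{\ket{v}\bra{w}}=\ket{v^*}\bra{\sigma_{-i}(w^*)}$. Hence
\[
\overline{M\cdot_S N}=\ket{(ac)^*}\bra{\sigma_{-i}((bd)^*)}=\ket{c^*a^*}\bra{\sigma_{-i}(d^*)\sigma_{-i}(b^*)}=\overline{N}\cdot_S\overline{M}.
\]
This uses only that $\sigma_{-i}$ is multiplicative. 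The identity extends to all operators because $M\mapsto\overline{M}$ is antilinear, i.e.\ additive and conjugate-homogeneous, so it is compatible with sums in both factors.

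\textbf{Applying it to $A$.} Since $A$ is $*$-preserving, $\overline{A}=A$. Therefore
\[
\overline{\delta^{-2}A\cdot_S X}=\delta^{-2}\,\overline{X}\cdot_S A\in S_R .
\]
The map $X\mapsto\overline{X}$ is a bijection of $B(L^2(\mathcal{M},\Psi))$, since $\overline{\overline{X}}=X$ holds trivially from the definition. So, as $X$ ranges over all operators, $\overline{X}$ does too, and $\overline{S_L}$ equals the span of all $\delta^{-2}Y\cdot_S A$, which is $S_R$. The same argument with the roles of $S_L$ and $S_R$ swapped gives $\overline{S_R}=S_L$. Alternatively, this second equality follows by applying the involution to $\overline{S_L}=S_R$.

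\textbf{Main obstacle.} There is no real obstacle; the only point needing care is consistency of conventions. I must make sure the operation $(S^\dagger)^{TL}$ in the statement agrees with the $\overline{\,\cdot\,}$ of Theorem~\ref{undirected_equivalence}. That theorem's proof writes $\overline{M}=(M^\dagger)^{TL}=(M^{TR})^\dagger$, which I will cross-check on a rank-one operator using Proposition~\ref{conjugate and transpose}. I must also check that the order reversal in the Schur product is correct. This is where the antimultiplicativity of $*$ enters, and it is precisely what turns a left Schur factor of $A$ into a right one.
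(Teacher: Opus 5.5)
Your proof is correct, but it takes a different route from the paper. The paper's one-line argument goes through Lemma~\ref{pi()TL}. By Remark~\ref{cdotpi}, one can write $S_L=\operatorname{span}\,\pi^\op(\mathcal{M}^\op)A\,\pi^\op(\mathcal{M}^\op)$ and $S_R=\operatorname{span}\,\pi(\mathcal{M})A\,\pi(\mathcal{M})$. The map $M\mapsto (M^\dagger)^{TL}$ then has three properties:
\begin{itemize}
\item it preserves the order of compositions, since both $\dagger$ and ${}^{TL}$ reverse it (Proposition~\ref{Transpose_features}(ii));
\item it sends $\pi^\op(b)$ to $\pi(\sigma_{i/2}(b^*))$, by Lemma~\ref{pi()TL};
\item it fixes $A$.
\end{itemize}

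You never pass to the bimodule picture. Instead you work with the defining Schur-product spanning sets and isolate the single identity $\overline{M\cdot_S N}=\overline{N}\cdot_S\overline{M}$, which you check on rank-one operators. It also follows at once from Proposition~\ref{Transpose_features}(i) together with $(M\cdot_S N)^\dagger=M^\dagger\cdot_S N^\dagger$. This identity is exactly what turns a left Schur factor $A$ into a right one.

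Your version is more self-contained, since it needs neither Remark~\ref{cdotpi} nor Lemma~\ref{pi()TL}. It also makes explicit that only $\overline{A}=A$, i.e.\ $*$-preservation of $A$, is used, whereas the paper invokes full reality. The paper's version, in turn, expresses the swap directly as an exchange between $\pi^\op(\mathcal{M}^\op)$-bimodules and $\pi(\mathcal{M})$-bimodules. That is the form in which the result is used in the subsequent corollary and theorem on arbitrary $\pi(\mathcal{M})$--$\pi(\mathcal{M})$ bimodules.
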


\begin{proof}
    The proposition is a direct consequence of Lemma~\ref{pi()TL} and the fact that $A$ is real.
\end{proof}

As an immediate consequence of this equivalence, we get the following corollary.

\begin{corollary}
    Any $\pi(\mathcal{M})$--$\pi(\mathcal{M})$ bimodule $S_R\subseteq B(L^2(\mathcal{M}, \Psi))$ defines a $\pi^\op(\mathcal{M}^\op)$--$\pi^\op(\mathcal{M}^\op)$ bimodule $S_L$ via $S_L=\overline{S_R}$.
\end{corollary}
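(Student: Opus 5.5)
The plan is to transport everything through the map $X\mapsto \overline{X}:=(X^\dagger)^{TL}$, which by Theorem~\ref{undirected_equivalence} is the conjugate-linear map $\overline{X}(a)=X(a^*)^*$ on $B(L^2(\mathcal{M},\Psi))$. This description makes it evident that $\overline{\,\cdot\,}$ is an involution, $\overline{\overline{X}}=X$, and that it is multiplicative for composition: $\overline{XY}=\overline{X}\,\overline{Y}$. Since these two properties are all that is needed, I do not intend to appeal to Proposition~\ref{Transpose_features}(ii),(iv) and Lemma~\ref{lemma:trans} for them.

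First, apply Lemma~\ref{pi()TL} with $a^*$ in place of $a$. Using $\pi(a)^\dagger=\pi(a^*)$, this gives
\[
\overline{\pi(a)}=\pi(a^*)^{TL}=\pi^{\op}(\sigma_{i/2}(a^*)).
\]
As $a$ ranges over $\mathcal{M}$, the element $\sigma_{i/2}(a^*)$ ranges over all of $\mathcal{M}$. Hence $\overline{\pi(\mathcal{M})}=\pi^{\op}(\mathcal{M}^{\op})$.

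Next, $S_R$ is by hypothesis a $\pi(\mathcal{M})$--$\pi(\mathcal{M})$ bimodule. Set $S_L:=\overline{S_R}$, and take $X\in S_R$ and $b,c\in\mathcal{M}^{\op}$. Write $\pi^{\op}(b)=\overline{\pi(a)}$ and $\pi^{\op}(c)=\overline{\pi(a')}$ for suitable $a,a'\in\mathcal{M}$, which is possible by the first step. Multiplicativity then gives
\[
\pi^{\op}(b)\,\overline{X}\,\pi^{\op}(c)=\overline{\pi(a)\,X\,\pi(a')},
\]
and the right-hand side lies in $\overline{S_R}$ because $S_R$ is a bimodule. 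The map $\overline{\,\cdot\,}$ is conjugate-linear and bijective, so $S_L$ is a linear subspace; since we are in finite dimensions it is automatically closed. This shows that $S_L$ is a $\pi^{\op}(\mathcal{M}^{\op})$--$\pi^{\op}(\mathcal{M}^{\op})$ bimodule.

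Finally, by Proposition~\ref{S_R-S_L-equivalence}, when $S_R$ arises from a graph, this $S_L$ coincides with the left edge space of that graph.

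The only delicate point is checking that $\overline{\,\cdot\,}$ is multiplicative rather than anti-multiplicative. The pointwise formula settles this at once: $\overline{XY}(a)=(XY(a^*))^*=\overline{X}\bigl(Y(a^*)^*\bigr)=\overline{X}\,\overline{Y}(a)$. Because of this, I do not expect a genuine obstacle.
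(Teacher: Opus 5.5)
Your proof is correct. It rests on the same ingredient the paper uses: Lemma~\ref{pi()TL}, i.e.\ the fact that the conjugation $X\mapsto\overline{X}=(X^\dagger)^{TL}$ carries $\pi(\mathcal{M})$ onto $\pi^{\op}(\mathcal{M}^{\op})$. The route differs, though. The paper simply declares the corollary an immediate consequence of Proposition~\ref{S_R-S_L-equivalence}. You instead prove it directly for an arbitrary bimodule, by checking that $\overline{\,\cdot\,}$ is a conjugate-linear, involutive, multiplicative bijection. You verify multiplicativity pointwise through $\overline{X}(a)=X(a^*)^*$, rather than combining Proposition~\ref{Transpose_features}(ii),(iv).

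Your direct route is actually the sounder one. Proposition~\ref{S_R-S_L-equivalence} is stated only for the edge spaces of a quantum graph, and its proof uses that $A$ is real. So it does not literally apply to a bimodule that is not yet known to come from a graph. Moreover, the next theorem uses this very corollary to produce the operator $A$, which need not even be real. Deriving the corollary from that proposition would therefore be circular, whereas your argument needs no adjacency operator at all. Your closing remark correctly restricts the appeal to Proposition~\ref{S_R-S_L-equivalence} to the case where $S_R$ already comes from a graph.
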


The following theorem establishes that every abstract $\pi(\mathcal{M})$--$\pi(\mathcal{M})$ bimodule arises uniquely from a $*$-preserving, Schur-idempotent adjacency operator, thereby showing equivalence of two frameworks.

\begin{theorem}
    Let $(\mathcal{M}, \Psi)$ be a quantum system with some $\delta$, and let $S_R\subseteq B(L^2(\mathcal{M}, \Psi))$ be any $\pi(\mathcal{M}) $--$ \pi(\mathcal{M})$ bimodule. Then there exists a unique $*$-preserving operator $A\in B(L^2(\mathcal{M}, \Psi))$ such that $A\cdot_S A=\delta^2 A$ and $S_R=\pi(\mathcal{M})A\pi(\mathcal{M})$.
\end{theorem}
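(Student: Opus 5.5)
The plan is to transport the bimodule $S_R$ to a left ideal–type object on which Schur multiplication by a fixed element acts as a projection, and to take $A$ to be the Schur unit of that projection. The key structural input is Proposition~\ref{LR_representation} and Remark~\ref{cdotpi}. By Remark~\ref{cdotpi} ii), Schur multiplication from the right by a rank-one operator $\ket{b}\bra{a}$ is the map $X\mapsto \pi^{\op}(\sigma_{i/2}(b))X\pi^{\op}(\sigma_{-i/2}(a^*))$. Dually, by Remark~\ref{cdotpi} i), left Schur multiplication by $\ket{b}\bra{a}$ is $X\mapsto \pi(b)X\pi(a^*)$. Consequently the $\pi(\mathcal{M})$--$\pi(\mathcal{M})$ bimodules are exactly the subspaces $S\subseteq B(L^2(\mathcal{M},\Psi))$ with $B\cdot_S S\subseteq S$ for all $B$, since rank-one operators span $B(L^2)$. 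In other words, they are the two-sided Schur ideals for the left Schur multiplication. The algebra $(B(L^2(\mathcal{M},\Psi)),\cdot_S)$ is then identified, via $\ket{b}\bra{a}\mapsto b\otimes a^{*}$ (or $b\otimes\sigma$-twisted $a^*$), with the finite-dimensional $\ast$-algebra $\mathcal{M}\otimes\mathcal{M}^{\op}$, with unit $\delta^{-2}A_c$ (using Corollary~\ref{xi_j_xi_j*}).

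\textbf{Existence.} A two-sided ideal $J$ of the finite-dimensional semisimple algebra $\mathcal{M}\otimes\mathcal{M}^{\op}$ is a direct sum of simple summands. Hence $J=eJ=Je$ for a unique central idempotent $e$, namely the unit of $J$. Transporting back, I set $A:=\delta^2 e$, viewed in $B(L^2)$. Then $A\cdot_S A=\delta^2 A$. Moreover $S_R=\{X\cdot_S A\}=\pi(\mathcal{M})A\pi(\mathcal{M})$, using $X\cdot_S A$ for $X=\ket{b}\bra{a}$ equal to $\pi(b)A\pi(a^*)$ by Remark~\ref{cdotpi} i), together with the fact that $e$ is central so that the left and right Schur actions agree on $J$.

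\textbf{$*$-preservation.} Central idempotents of a C$^*$-algebra are automatically self-adjoint projections. I need to check that the involution on $\mathcal{M}\otimes\mathcal{M}^{\op}$ corresponds, under the identification, to $M\mapsto\overline{M}=(M^\dagger)^{TL}$ from the proof of Theorem~\ref{undirected_equivalence}. On a rank-one operator, $\overline{\ket{v}\bra{w}}=\ket{v^*}\bra{\sigma_{-i}(w^*)}$, so the identification must be chosen with the appropriate $\sigma_{i/2}$-twist in the second leg. I expect this to be the main obstacle: fixing the identification so that it is simultaneously multiplicative for $\cdot_S$ (by Theorem~\ref{schur_product}) and $*$-compatible with $\overline{\,\cdot\,}$. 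Once that holds, $e=e^*$ yields $\overline{A}=A$, i.e.\ $A$ is $*$-preserving.

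\textbf{Uniqueness.} Suppose $A'$ is another Schur idempotent (after rescaling by $\delta^{-2}$) with $\pi(\mathcal{M})A'\pi(\mathcal{M})=S_R$. Since $\mathcal{M}\otimes\mathcal{M}^{\op}$ is spanned by elementary tensors, $A'$ generates the ideal $J$. Then $A'$ lies in $J$, and $A'\cdot_S X=X$ for all $X\in J$, so $\delta^{-2}A'$ is the unit of $J$, which is unique. Here I use $\pi(b)A'\pi(a^*)=\ket{b}\bra{a}\cdot_S A'$ to see that $A'$ acts as a unit on generators. The one point requiring care is that a general $S_R=\pi(\mathcal{M})A'\pi(\mathcal{M})$ with $A'$ idempotent really forces centrality of $\delta^{-2}A'$ in the ideal. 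This follows because $A'$ is a unit for the left action on $S_R$, and the right action is handled the same way via Remark~\ref{cdotpi}.
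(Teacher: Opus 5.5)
\textbf{Gap.} You correctly note that $\pi(b)X\pi(a^*)=\ket{b}\bra{a}\cdot_S X$, so $\pi(\mathcal{M})$--$\pi(\mathcal{M})$ bimodules are exactly the subspaces $S$ with $B\cdot_S S\subseteq S$. But this makes $S_R$ only a \emph{left} ideal of the Schur algebra $\mathcal{A}=(B(L^2(\mathcal{M},\Psi)),\cdot_S)\cong\mathcal{M}\otimes\mathcal{M}^{\op}$, not a two-sided one. By Remark~\ref{cdotpi} ii), right Schur multiplication $X\mapsto X\cdot_S\ket{b}\bra{a}$ is the $\pi^{\op}(\mathcal{M}^{\op})$-bimodule action, and nothing in the hypothesis makes $S_R$ invariant under it. So the central idempotent in your existence step need not exist. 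In the uniqueness step, the claim that ``the right action is handled the same way via Remark~ii)'' is likewise unfounded: from $S_R=\{B\cdot_S A'\}$ you only get that $\delta^{-2}A'$ is a \emph{right} unit of $S_R$.

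The failure already occurs for $\mathcal{M}=\Mat_n$ with $n\ge 2$. There $\mathcal{A}\cong\Mat_{n^2}$ is simple, so its only central idempotents are $0$ and the unit, and your construction yields only $S_R=0$ and $S_R=B(L^2(\mathcal{M},\Psi))$. Yet $S_R=\pi(\mathcal{M})=\pi(\mathcal{M})\,\id\,\pi(\mathcal{M})$, with $\id\cdot_S\id=mm^\dagger=\delta^2\id$, is a proper nonzero bimodule. Your argument works only when $\mathcal{A}$ is commutative, i.e.\ in the classical case.

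\textbf{Repair.} The involution issue you flag does resolve. The map $\ket{b}\bra{a}\mapsto b\otimes\sigma_{-i/2}(a^*)$ is multiplicative for $\cdot_S$ and carries $\overline{M}=(M^\dagger)^{TL}$ to the adjoint in $\mathcal{M}\otimes\mathcal{M}^{\op}$. Hence $\mathcal{A}$, with involution $M\mapsto\overline{M}$, is a finite-dimensional $\mathrm{C}^*$-algebra whose projections are exactly the operators $\delta^{-2}A$ with $A\cdot_S A=\delta^2A$ and $A$ $*$-preserving.

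What replaces your central idempotent is the standard fact that every left ideal $L$ of such an algebra equals $\mathcal{A}p$ for a unique projection $p$ (namely the unit of $L\cap\overline{L}$). Setting $A=\delta^2p$, Remark~\ref{cdotpi} i) gives $S_R=\{B\cdot_S A\}=\pi(\mathcal{M})A\pi(\mathcal{M})$.

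Uniqueness must use $*$-preservation, which your argument never invokes and cannot avoid. Merely idempotent right units are not unique: every $p+(1-p)zp$, with $z\in\mathcal{A}$ and $1=\delta^{-2}A_c$ the Schur unit, generates the same left ideal. For a projection $q$ with $\mathcal{A}q=\mathcal{A}p$, however, one gets $q=qp$ and $p=pq$, hence $q=q^*=pq=p$.

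Corrected this way, your route is a self-contained proof of the bimodule/Schur-projection correspondence directly for $\pi(\mathcal{M})$. The paper instead imports that correspondence from Daws for the $\pi^{\op}(\mathcal{M}^{\op})$-bimodule $\overline{S_R}$ and transports it back via $M\mapsto\overline{M}$ (Proposition~\ref{S_R-S_L-equivalence}).
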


\begin{proof}
    By the previous corollary, any $\pi(\mathcal{M})$--$\pi(\mathcal{M})$ bimodule $S_R$ defines a $\pi^\op(\mathcal{M}^\op)$--$\pi^\op(\mathcal{M}^\op)$ bimodule $\overline{S_R}$. Hence, $\overline{S_R}$ defines, by \cite[Section 2.2]{daws2025quantumgraphsinfinitedimensionshilbertschmidts}, a unique operator $A$ such that $A\cdot_S A=\delta^2 A$, $A$ is $*$-preserving, and $\overline{S_R}=\pi^\op(\mathcal{M}^\op)A \pi^\op(\mathcal{M}^\op)$. By Proposition~\ref{S_R-S_L-equivalence}, $A$ satisfies the desired properties.
\end{proof}

\begin{remark}
Note that, in general, the operator $A$ obtained from the preceding theorem need not be an adjacency operator in the sense of Definition~\ref{q-graph:definition}, since it may fail to be real. However, this condition is automatically satisfied whenever $S_R$ (or equivalently $S_L$) is invariant under the modular automorphism group $\sigma_t$, as this invariance implies that $A$ commutes with $\sigma_t$. We further observe that the reality condition is trivially satisfied for tracial graphs, for which $\sigma_t$ is the identity map, and for undirected graphs, where $S_R=S_R^\dagger$ implies $A=A^\dagger$, so that $A^\dagger$ is automatically $*$-preserving.
\end{remark}

\section{Quantum incidence map}
\label{sec:incidence}
In Section~2 (see Example~\ref{classical incidence matrix}), we recalled the classical incidence matrix and showed how it maps the edge space of a graph to its vertex space, ultimately giving rise to the combinatorial Laplacian for both directed and undirected graphs.

Here, we extend these ideas to the quantum setting. Using the left and right edge spaces $S_L$ and $S_R$ discussed
in the previous section, we introduce the corresponding quantum incidence operators. To account for the non-commutative structure of the underlying algebra and to ensure compatibility between the left and right actions of the incidence operators, our definitions incorporate the modular automorphism group $\sigma_t$. We show that, under a suitable identification of the edge spaces, this construction naturally yields the quantum combinatorial Laplacian introduced in \cite[Lemma~2.6]{Matsuda_2024}.

\begin{definition}
\label{def:incidence_quantum}
Let $G=(\mathcal{M}, \Psi, A)$ be a quantum graph and let $S_L (S_R)$ be its left (right) edge space. We define two, left and right, incidence operators $\mathcal{K}_L:\mathcal{M}^\op\to S_L(G)$ and $\mathcal{K}_R:\mathcal{M}\to S_R(G)$ by
$$\mathcal{K}_L(a):=\delta^{-2}(\pi^\op(a)A-A\pi^\op(a))$$
$$\mathcal{K}_R(a):=\delta^{-2}(\pi(a)A-A\pi(a))$$
\end{definition}
\begin{remark}
Clearly, for classical graphs representations $\pi$ and $\pi^\op$ are identical, and the operators $\mathcal{K}_L$ and $\mathcal{K}_R$ coincide, fully agreeing with the incidence matrix defined in Example~\ref{classical incidence matrix}. The fact that the ranges of $\mathcal{K}_L$ ($\mathcal{K}_R$) is $S_L$ ($S_R$) comes from the following lemma.
\end{remark}

\begin{lemma}
For any quantum graph $(\mathcal{M}, \Psi, A)$, we have the following identity.
    $$\mathcal{K}_L(a)=\delta^{-2}A\cdot_S\left(\ket{\sigma_{-i/2}(a)}\bra{\I}-\ket{\I}\bra{\sigma_{-i/2}(a^*)}\right),$$
$$\mathcal{K}_R(a)=\delta^{-2}\left(\ket{a}\bra{\I}-\ket{\I}\bra{a^*}\right)\cdot_S A.$$
\end{lemma}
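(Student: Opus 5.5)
The plan is to expand the Schur products on the right-hand sides with Remark~\ref{cdotpi} and compare the result directly with Definition~\ref{def:incidence_quantum}. By linearity it suffices to treat each rank-one term separately.

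\emph{Right formula.} Remark~\ref{cdotpi}(i) gives $\ket{b}\bra{a}\cdot_S B=\pi(b)B\pi(a^*)$. With $B=A$ this yields
\[
\ket{a}\bra{\I}\cdot_S A=\pi(a)A\pi(\I)=\pi(a)A
\]
and
\[
\ket{\I}\bra{a^*}\cdot_S A=\pi(\I)A\pi((a^*)^*)=A\pi(a).
\]
The difference is $\pi(a)A-A\pi(a)$. Multiplying by $\delta^{-2}$ gives $\mathcal{K}_R(a)$, and this step is immediate.

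\emph{Left formula.} Here I use Remark~\ref{cdotpi}(ii), $B\cdot_S\ket{b}\bra{c}=\pi^\op(\sigma_{i/2}(b))B\pi^\op(\sigma_{-i/2}(c^*))$, with $B=A$.

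For the first term, take $b=\sigma_{-i/2}(a)$ and $c=\I$. This gives $\pi^\op(a)A\pi^\op(\I)=\pi^\op(a)A$.

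For the second term, take $b=\I$ and $c=\sigma_{-i/2}(a^*)$. Then $c^*=\sigma_{i/2}(a)$, because $\sigma_z(x)^*=\sigma_{\bar z}(x^*)$. Hence $\sigma_{-i/2}(c^*)=a$, and the term equals $A\pi^\op(a)$.

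The difference is therefore $\pi^\op(a)A-A\pi^\op(a)$, which is $\delta^2\mathcal{K}_L(a)$. The stated formula follows after multiplying by $\delta^{-2}$.

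The main point needing care is the modular bookkeeping: the rule $\sigma_z(x)^*=\sigma_{\bar z}(x^*)$, the group law $\sigma_{i/2}\sigma_{-i/2}=\mathrm{id}$, and $\sigma_z(\I)=\I$. All three follow at once from $\sigma_z(x)=\rho^{iz}x\rho^{-iz}$.

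One more thing must be checked: Remark~\ref{cdotpi}(ii) as stated (with the Schur factor on the right) produces exactly the order $A\cdot_S(\cdots)$ required in the lemma. It does, since $m(B\otimes\ket{b}\bra{c})m^\dagger=B\cdot_S\ket{b}\bra{c}$. The remark also gives $\ket{a}\bra{b}\cdot_S A=A\cdot_S\ket{a}\bra{b}$ only up to the switch between $\pi$ and $\pi^\op$, so the order cannot be exchanged freely. Using (i) for the right formula and (ii) for the left is precisely what is needed.
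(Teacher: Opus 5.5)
Your proposal is correct and follows the same route as the paper, whose proof is just an appeal to Remark~\ref{cdotpi}. You carry out that expansion explicitly, using part (i) for $\mathcal{K}_R$ and part (ii) for $\mathcal{K}_L$, and the modular bookkeeping is right: $\sigma_{-i/2}(a^*)^*=\sigma_{i/2}(a)$, and $\sigma_z(\I)=\I$ gives $\pi(\I)=\pi^\op(\I)=\mathrm{id}$.
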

\begin{proof}
    The lemma is a consequence of Remark~\ref{cdotpi}.
\end{proof}

Analogous to classical incidence matrices, which act as boundary operators encoding the geometry of the underlying graph, their quantum counterparts must respect the algebraic structure of the underlying von Neumann algebra. As an immediate consequence of Definition~\ref{def:incidence_quantum}, we have the following theorem.
\begin{theorem}\label{leibnitz_rule}
For any quantum graph $(\mathcal{M}, \Psi, A)$, the incidence operators satisfy the Leibniz rule:
$$\mathcal{K}_L(a \cdot_\op b)=\pi_\Psi^\op(a)\mathcal{K}_L(b)+\mathcal{K}_L(a)\pi^\op_\Psi(b),$$
$$\mathcal{K}_R(ab)=\pi_\Psi(a)\mathcal{K}_R(b)+\mathcal{K}_R(a)\pi_\Psi(b).$$
\end{theorem}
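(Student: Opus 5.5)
The plan is to treat both identities as instances of the elementary fact that a commutator with a fixed operator is a derivation. The only input needed from earlier in the paper is that $\pi_\Psi:\mathcal{M}\to B(L^2(\mathcal{M},\Psi))$ and $\pi_\Psi^{\op}:\mathcal{M}^{\op}\to B(L^2(\mathcal{M},\Psi))$ are multiplicative, as observed after their definition. No property of $A$ itself (Schur idempotency, reality) will be used, and the prefactor $\delta^{-2}$ simply passes through by linearity.

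For the right operator, I will expand $\delta^{2}\mathcal{K}_R(ab)=\pi(a)\pi(b)A-A\pi(a)\pi(b)$, using $\pi(ab)=\pi(a)\pi(b)$. Then I add and subtract the mixed term $\pi(a)A\pi(b)$:
\[
\pi(a)\pi(b)A-A\pi(a)\pi(b)=\pi(a)\bigl(\pi(b)A-A\pi(b)\bigr)+\bigl(\pi(a)A-A\pi(a)\bigr)\pi(b).
\]
Multiplying by $\delta^{-2}$ gives $\pi_\Psi(a)\mathcal{K}_R(b)+\mathcal{K}_R(a)\pi_\Psi(b)$.

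For the left operator, the same computation works once I confirm that $\pi^{\op}$ is multiplicative for the opposite product, i.e.\ $\pi^{\op}(a\cdot_{\op}b)=\pi^{\op}(a)\pi^{\op}(b)$. I will check this directly: for $c\in\mathcal{M}$,
\[
\pi^{\op}(a)\pi^{\op}(b)c=c\,\sigma_{-i/2}(b)\sigma_{-i/2}(a)=c\,\sigma_{-i/2}(ba)=\pi^{\op}(a\cdot_{\op}b)c,
\]
since $\sigma_{-i/2}$ is an automorphism. With this in hand, the add-and-subtract argument with the mixed term $\pi^{\op}(a)A\pi^{\op}(b)$ goes through verbatim.

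The only point needing care is the order convention in $\mathcal{M}^{\op}$, i.e.\ matching $a\cdot_{\op}b=ba$ with the order of the factors in the representation. The displayed check settles this, so I do not expect any genuine obstacle.
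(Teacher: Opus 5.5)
Your proposal is correct and is essentially the paper's argument: the paper states the Leibniz rule as an immediate consequence of Definition~\ref{def:incidence_quantum}, since a commutator with the fixed operator $A$ is a derivation and $\pi_\Psi$, $\pi_\Psi^{\op}$ are multiplicative. Your explicit check that $\pi^{\op}(a)\pi^{\op}(b)=\pi^{\op}(a\cdot_{\op}b)$, using that $\sigma_{-i/2}$ is multiplicative, is a useful addition, because the paper only verifies the $*$-compatibility of $\pi^{\op}$ after defining it.
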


To construct the quantum combinatorial Laplacian in analogy with the classical setting, we first need to determine the adjoints of $\mathcal{K}_L$ and $\mathcal{K}_R$ with respect to the adapted Hilbert space inner products $\langle\cdot,\cdot\rangle_L$ and $\langle\cdot,\cdot\rangle_R$ introduced in the previous section.

\begin{lemma}
For any quantum graph $G=(\mathcal{M}, \Psi, A)$, the Hermitian conjugates ${}^{\dagger_L}$ and ${}^{\dagger_R}$ of the left and right incidence operators are given by
$$(\mathcal{K}_L)^{\dagger_L}(X)=(\sigma_{i/2}\circ X-\sigma_{-i/2}\circ X^{TL})\I, \qquad (\mathcal{K}_R)^{\dagger_R}(X)=(X-\sigma_i\circ X^{TR})\I.$$
\end{lemma}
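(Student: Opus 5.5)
We compute the adjoints directly from the definitions of $\langle\cdot,\cdot\rangle_L$ and $\langle\cdot,\cdot\rangle_R$, treating the two terms of each commutator separately. We work with the trace form: for $X\in B(L^2(\mathcal{M},\Psi))$ and $a\in\mathcal{M}$ we need $\langle X,\mathcal{K}_R(a)\rangle_R=\mathrm{Tr}((X\circ\sigma_{-i})^\dagger\,\delta^{-2}(\pi(a)A-A\pi(a)))$. We rewrite this as $\langle Y,a\rangle_\Psi$ for an explicit $Y$. This $Y$ is then $(\mathcal{K}_R)^{\dagger_R}(X)$, where $\mathcal{K}_R$ is regarded as a map into all of $B(L^2)$. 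Restricting to $X\in S_R$ does not change the formula, since the inner product on $S_R$ is the restriction.

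The basic tool is a trace formula: for $T\in B(L^2(\mathcal{M},\Psi))$,
$$\mathrm{Tr}(T\pi(a))=\sum_j\langle \xi_j, T(a\xi_j)\rangle_\Psi .$$
Using $\sum_j \xi_j\xi_j^*=\delta^2\I$ (Corollary~\ref{xi_j_xi_j*}) together with the KMS condition, this should reduce to $\delta^2\langle T^\dagger\I, a\rangle$-type expressions after moving $a$ across. Concretely, I expect $\mathrm{Tr}(\pi(a)T)=\delta^2\langle (T^{\dagger})^{\ast\text{-twist}}\I,a\rangle_\Psi$. The cleanest route is to use Proposition~\ref{LR_representation}: $\delta^{-2}m(B\otimes\mathrm{id})m^\dagger=\delta^{-2}\pi(B\I)$-like identities. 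The simplest check is rank one. Take $T=\ket{u}\bra{v}$; then $\mathrm{Tr}(\pi(a)T)=\langle v,au\rangle=\Psi(v^*au)=\Psi(\sigma_i(u)v^*a)=\langle v\sigma_{-i}(u^*),a\rangle$. For the other ordering, $\mathrm{Tr}(T\pi(a))=\langle a^*v,u\rangle=\Psi(v^*a u)$ as well, which gives the same expression.

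So the two terms of $\mathrm{Tr}(Z^\dagger(\pi(a)A-A\pi(a)))$, with $Z=X\circ\sigma_{-i}$, become $\mathrm{Tr}(\pi(a)AZ^\dagger)$ and $\mathrm{Tr}(\pi(a)Z^\dagger A)$. By linearity each is an inner product $\langle W_k,a\rangle$, with $W$ computed from the rank-one decompositions of $AZ^\dagger$ and $Z^\dagger A$. Using the rank-one form $\ket{u}\bra{v}\mapsto v\sigma_{-i}(u^*)$ and comparing with Proposition~\ref{conjugate and transpose}, we get
$$\ket{u}\bra{v}^{TL}\,\I=v^*\Psi(\sigma_{-i}(u^*)^*)\ldots$$
This needs care. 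The natural identification is that $v\sigma_{-i}(u^*)$ equals $\delta^{-2}\,m(\ket{u}\bra{v}^{\dagger}\otimes\mathrm{id})m^\dagger$ evaluated at $\I$, or equivalently $(T^\dagger)$ applied through the Schur identity $\delta^{-2}A\cdot_S(\cdot)$. At this point I would use that $A$ is real, that it commutes with $\sigma_t$, and that $A^\dagger=A^{TL}=A^{TR}$. Together with Lemma~\ref{schur_conjugate}, these let us absorb $A$ against $X\in S_R$ via $P_A'X=X$ and the self-adjointness of $P_A'$ (Theorem~\ref{orthogonal_P_A}). The expected outcome is that one term gives $X\I$ and the other gives $\sigma_i\circ X^{TR}\,\I$, the latter arising from the transpose formula $\ket{u}\bra{v}^{TR}=\ket{\sigma_{-i}(v^*)}\bra{u^*}$. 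The left case is identical, with $\pi^\op(a)b=b\sigma_{-i/2}(a)$ producing the half-shifts $\sigma_{\pm i/2}$ and $TL$.

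The main obstacle is the bookkeeping of modular shifts and of how the factor $A$ (with the $\delta^{-2}$) disappears. I would first verify the formula entirely in rank-one terms with $X=\delta^{-2}\ket{x}\bra{y}\cdot_S A$, which spans $S_R$, reducing everything to $\Psi$-identities via the KMS condition. As a sanity check I would confirm it in the classical case, where it should reproduce $K^\dagger(X)=(X-X^T)\Omega$ from Section~\ref{sec:class}.
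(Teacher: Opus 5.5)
There is a genuine gap. You name the right tools at the end (Theorem~\ref{orthogonal_P_A} and $P_A'X=X$), but the step that connects them to your computation is missing, and the bridge you propose instead is wrong. You split $\langle X,\mathcal{K}_R(a)\rangle_R$ into $\delta^{-2}\mathrm{Tr}(\pi(a)AZ^\dagger)-\delta^{-2}\mathrm{Tr}(\pi(a)Z^\dagger A)$ with $Z=X\circ\sigma_{-i}$. You then evaluate these with the rank-one formula $\mathrm{Tr}(\pi(a)\ket{u}\bra{v})=\langle v\sigma_{-i}(u^*),a\rangle_\Psi$, which is correct. This leaves $A$ inside the trace, and KMS manipulations alone cannot remove it. Already classically, $\delta^{-2}\mathrm{Tr}(\pi(a)AX^\dagger)=\langle X\I,a\rangle_\Psi$ holds only because $X$ is supported on the support of $A$; Schur idempotency has to enter through $X\in S_R$, and you never say how. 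Your identification of $v\sigma_{-i}(u^*)$ with $\delta^{-2}m((\ket{u}\bra{v})^\dagger\otimes\id)m^\dagger\I$ is false. By Proposition~\ref{LR_representation} the latter equals $\delta^{-2}vu^*$, which is off by $\delta^{-2}$ even in the tracial case and also lacks the modular twist in general. Relatedly, the stated formula is \emph{not} the adjoint of $\mathcal{K}_R$ as a map into all of $B(L^2(\mathcal{M},\Psi))$. That adjoint is $X\mapsto(P_A'X-\sigma_i\circ(P_A'X)^{TR})\I$. So restricting to $S_R$ is exactly what makes the formula true, not a step that ``does not change the formula''.

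The missing idea is to factor the incidence operator through the projection before taking adjoints, as the paper does. By Remark~\ref{cdotpi}, $\pi(a)A=\ket{a}\bra{\I}\cdot_S A$ and $A\pi(a)=\ket{\I}\bra{a^*}\cdot_S A$, so $\mathcal{K}_R(a)=P_A'\bigl(\ket{a}\bra{\I}-\ket{\I}\bra{a^*}\bigr)$. Self-adjointness of $P_A'$ for $\langle\cdot,\cdot\rangle_R$ and $P_A'X=X$ then give $\langle X,\mathcal{K}_R(a)\rangle_R=\langle X,\ket{a}\bra{\I}-\ket{\I}\bra{a^*}\rangle_R$, with no $A$ left. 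The first term is $\mathrm{Tr}((X\circ\sigma_{-i})^\dagger\ket{a}\bra{\I})=\langle X\I,a\rangle_\Psi$, since $\sigma_{-i}(\I)=\I$. The second term becomes $\langle\sigma_i\circ X^{TR}\I,a\rangle_\Psi$ by transposition-invariance of the trace and Proposition~\ref{Transpose_features}. Alternatively, KMS turns it into $\Psi((X^\dagger\I)a)$, and $(X^\dagger\I)^*=X^{TL}\I=\sigma_iX^{TR}\I$. The left case is not literally identical. It needs the factorization $\mathcal{K}_L(a)=\delta^{-2}A\cdot_S\bigl(\ket{\sigma_{-i/2}(a)}\bra{\I}-\ket{\I}\bra{\sigma_{-i/2}(a^*)}\bigr)$ and self-adjointness of $P_A$ for $\langle\cdot,\cdot\rangle_L$. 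It also needs a final transfer of $\sigma_{-i/2}$ across $\langle\cdot,\cdot\rangle_\Psi$, which is where the half-shifts $\sigma_{\pm i/2}$ in the formula come from.
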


\begin{proof}
Let $X\in S_L$ and $a\in \mathcal{M}^\op$. Using Remark~\ref{cdotpi}, Proposition~\ref{Transpose_features}, and Theorem~\ref{orthogonal_P_A}, we compute:
\begin{align*}
\langle X, \mathcal{K}_L(a)\rangle_{L}
&=\delta^{-2}\left\langle X,A\cdot_S\left(\ket{\sigma_{-i/2}(a)}\bra{\I}-\ket{\I}\bra{\sigma_{-i/2}(a^*)}\right) \right\rangle_{L}\\ 
&=\delta^{-2}\left\langle A\cdot_S X,\ket{\sigma_{-i/2}(a)}\bra{\I}-\ket{\I}\bra{\sigma_{-i/2}(a^*)} \right\rangle_{L}\tag{1} \\
&=\left\langle X,\ket{\sigma_{-i/2}(a)}\bra{\I}-\ket{\I}\bra{\sigma_{-i/2}(a^*)} \right\rangle_{L}\tag{2} \\
&=\mathrm{Tr}((\sigma_i\circ X)^\dagger \ket{\sigma_{-i/2}(a)}\bra{\I})
-\mathrm{Tr}((\sigma_i\circ X)^\dagger \ket{\I}\bra{\sigma_{-i/2}(a^*)})\\
&=\mathrm{Tr}((\sigma_i\circ X)^\dagger \ket{\sigma_{-i/2}(a)}\bra{\I})
-\mathrm{Tr}\left([(\sigma_i\circ X)^\dagger \ket{\I}\bra{\sigma_{-i/2}(a^*)}]^{TR}\right)\tag{3} \\
&=\mathrm{Tr}((\sigma_i\circ X)^\dagger \ket{\sigma_{-i/2}(a)}\bra{\I})
-\mathrm{Tr}\left(\ket{\sigma_{-i/2}(a)}\bra{\I}((\sigma_i\circ X)^\dagger)^{TR}\right)\tag{4} \\
&=\mathrm{Tr}((\sigma_{i}\circ X)^\dagger \ket{\sigma_{-i/2}(a)}\bra{\I})
-\mathrm{Tr}\left(\ket{\sigma_{-i/2}(a)}\bra{\I}(X^{TL}\circ \sigma_{-i})^\dagger\right)\tag{5}\\
&=\mathrm{Tr}((\sigma_{i}\circ X)^\dagger \ket{\sigma_{-i/2}(a)}\bra{\I})
-\mathrm{Tr}\left((X^{TL}\circ \sigma_{-i})^\dagger \ket{\sigma_{-i/2}(a)}\bra{\I}\right)\\
&=\langle \I, (\sigma_i\circ X-X^{TL}\circ \sigma_{-i})^\dagger \sigma_{-i/2}(a)\rangle_\Psi\tag{6}\\
&=\langle(\sigma_i \circ X-X^{TL}\circ \sigma_{-i})\I,\sigma_{-i/2}(a)\rangle_\Psi\\
&=\langle\sigma_{-i/2}[(\sigma_i \circ X-X^{TL}\circ \sigma_{-i})\I], a\rangle_\Psi\\
&=\langle(\sigma_{i/2}\circ X-\sigma_{-i/2}\circ X^{TL})\I, a\rangle_{\Psi}.\tag{7}
\end{align*}

In steps (1) and (2), we used that $P_A(\cdot)=\delta^{-2}A\cdot_S(\cdot)$ is self-adjoint with respect to the inner product $\langle\cdot,\cdot\rangle_L$, together with the fact that $X\in\operatorname{ran} P_A$.

In steps (3), (4), and (5), we used the identities
\[
\operatorname{Tr}(XY)=\operatorname{Tr}((XY)^{TR})
=\operatorname{Tr}(Y^{TR}X^{TR})
=\operatorname{Tr}(X^{TR}Y^{TR}),
\]
valid for all $X,Y\in B(L^2(\mathcal{M},\Psi))$, as well as the relation
\[
(X^\dagger)^{TR}=(X^{TL})^\dagger.
\]
Furthermore, we used the identity
\[
(\sigma_i\circ X)^{TL}=X^{TL}\circ\sigma_{-i},
\]
which follows from the fact that $\sigma_i^{TL}=\sigma_{-i}$.

In step (6), we used the cyclicity of the trace. Finally, in step (7), we used the self-adjointness of $\sigma_i$ together with the identity $\sigma_t(\mathbb{I})=\mathbb{I}$.

Using analogous properties, for any $Y\in S_R$ and $b\in \mathcal{M}$, we compute:

\begin{align*}
\langle Y,\mathcal{K}_R(b)\rangle_{R}
&=\left \langle Y,\ket{b}\bra{\I}-\ket{\I}\bra{b^*}\right\rangle_{R}\\
&=\mathrm{Tr}((Y\circ \sigma_{-i})^\dagger \ket{b}\bra{\I})
-\mathrm{Tr}((Y\circ \sigma_{-i})^\dagger \ket{\I}\bra{b})\\
&=\mathrm{Tr}((Y\circ \sigma_{-i})^\dagger \ket{b}\bra{\I})
-\mathrm{Tr}\left(\ket{b}\bra{\I}(( Y\circ \sigma_{-i})^\dagger)^{TL}\right)\\
&=\mathrm{Tr}((Y\circ \sigma_{-i})^\dagger \ket{b}\bra{\I})
-\mathrm{Tr}((\sigma_i\circ Y^{TR})^\dagger \ket{b}\bra{\I})\\
&=\langle \I,(Y-\sigma_i\circ Y^{TR})^\dagger b\rangle_\Psi\\
&=\langle (Y-\sigma_i\circ Y^{TR})\I,b\rangle_\Psi.
\end{align*}

\end{proof}
Having established explicit formulas for the adjoints of the incidence operators, we can now introduce the central operator-theoretic objects of this section. Composing the incidence operators with their corresponding left and right adjoints yields the natural quantum counterparts of the classical combinatorial Laplacian.

\begin{definition}
The left Laplacian $\Delta_L:\mathcal{M}^\op\to \mathcal{M}^\op$ and the right Laplacian $\Delta_R:\mathcal{M}\to \mathcal{M}$ of a graph $G=(\mathcal{M},\Psi, A)$ are defined by
$$\Delta_L:=\frac{\mathcal{K}_L^{\dagger_L}\mathcal{K}_L}{2}, \qquad \Delta_R:=\frac{(\mathcal{K}_R)^{\dagger_R}\mathcal{K}_R}{2}.$$
\end{definition}
Although the definitions of $\Delta_L$ and $\Delta_R$ are naturally formulated in an abstract terms, explicit formulas are essential for applications and spectral analysis. The following theorem expresses these Laplacians directly in terms of the adjacency operator $A$ and the associated degree-like elements obtained from the action of $A$ on the identity.
\begin{theorem}
For any quantum graph $G=(\mathcal{M}, \Psi, A)$, we have
$$2\Delta_L(v)=\delta^{-2}\left((A^\dagger\I)\cdot_\op v+v\cdot_\op (A\I)-Av-A^\dagger v\right),$$
and
$$2\Delta_R(v)=\delta^{-2}\left((A^\dagger\I)\cdot v+v\cdot (A\I)-Av-A^\dagger v\right).$$
\end{theorem}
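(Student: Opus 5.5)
The plan is to compute $\mathcal{K}_R^{\dagger_R}\mathcal{K}_R$ directly from the adjoint formula of the preceding lemma, $(\mathcal{K}_R)^{\dagger_R}(X)=(X-\sigma_i\circ X^{TR})\I$, applied to $X=\mathcal{K}_R(v)=\delta^{-2}(\pi(v)A-A\pi(v))$. The left case is handled in the same way with $\pi^\op$, $\cdot_\op$ and $TL$; alternatively, it can be transported to the right case via Proposition~\ref{S_R-S_L-equivalence} and the unitary $\iota$.

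First I evaluate $X\I$:
\[
\delta^{2}X\I=\pi(v)A\I-A\pi(v)\I=v\,(A\I)-Av .
\]
Next I compute $X^{TR}$. By Proposition~\ref{Transpose_features}~ii) we have $(\pi(v)A)^{TR}=A^{TR}\pi(v)^{TR}$. By the corollary following Theorem~\ref{undirected_equivalence}, $A^{TR}=A^\dagger$, and by Lemma~\ref{pi()TL}, $\pi(v)^{TR}=\pi^\op(\sigma_{-i/2}(v))$. Hence
\[
\delta^2X^{TR}=A^\dagger\pi^\op(\sigma_{-i/2}(v))-\pi^\op(\sigma_{-i/2}(v))A^\dagger .
\]
Applying this to $\I$ and using $\pi^\op(b)c=c\,\sigma_{-i/2}(b)$ gives
\[
\delta^2X^{TR}\I=A^\dagger(\sigma_{-i}(v))-(A^\dagger\I)\,\sigma_{-i}(v).
\]
Now I apply $\sigma_i$. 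Since $A^\dagger$ commutes with the modular group (the corollary after Lemma~\ref{sigma_commuting}) and $\sigma_i$ is multiplicative, I get
\[
\delta^2\sigma_i(X^{TR}\I)=A^\dagger v-\sigma_i(A^\dagger\I)\,v .
\]
I also need $\sigma_i(A^\dagger\I)=A^\dagger\sigma_i(\I)=A^\dagger\I$. Subtracting then yields
\[
\delta^2\cdot 2\Delta_R(v)=v(A\I)-Av-A^\dagger v+(A^\dagger\I)v ,
\]
which is the claimed formula.

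For $\Delta_L$ I follow the same steps using the adjoint $(\sigma_{i/2}\circ X-\sigma_{-i/2}\circ X^{TL})\I$. The ingredients are $A^{TL}=A^\dagger$, $\pi^\op(v)^{TL}=\pi(\sigma_{i/2}(v))$ from Lemma~\ref{pi()TL}, and $\pi^\op(v)A\I=(A\I)\sigma_{-i/2}(v)$. The factors $\sigma_{\pm i/2}$ combine with those introduced by $\pi^\op$ and cancel, because $A$ and $A^\dagger$ commute with $\sigma_t$ and fix $\I$. What remains is the expression $v\cdot_\op(A\I)+(A^\dagger\I)\cdot_\op v-Av-A^\dagger v$. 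The main obstacle is this modular bookkeeping in the left case: one must check that the half-shifts $\sigma_{\pm i/2}$ cancel exactly and that the products come out in the opposite order, matching $\cdot_\op$. I would verify the signs on a rank-one $A$ or on a diagonal $\rho$ if they look doubtful.
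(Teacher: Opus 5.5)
Your proposal is correct and follows the paper's proof essentially verbatim. The right case is exactly the paper's computation: the adjoint formula, then $A^{TR}=A^\dagger$ and $\pi(v)^{TR}=\pi^{\op}(\sigma_{-i/2}(v))$, then commuting $\sigma_i$ past $A^\dagger$. The paper handles the left case with precisely the ingredients you list, and the bookkeeping you flag does close: for $X=\mathcal{K}_L(v)$ one gets $\delta^2\sigma_{i/2}(X\I)=(A\I)v-Av$ and $\delta^2\sigma_{-i/2}(X^{TL}\I)=A^\dagger v-v(A^\dagger\I)$, using only that $A$ and $A^\dagger$ commute with $\sigma_t$ and that $\sigma_t(\I)=\I$.
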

\begin{proof}
Using Lemma~\ref{sigma_commuting}, we compute:
\begin{align*}
\delta^2(\mathcal{K}_L)^{\dagger_L} \mathcal{K}_L(v)
&=(\mathcal{K}_L)^{\dagger_L}(\pi^\op(v)A-A\pi^\op(v))\\
&=\sigma_{i/2}\circ(\pi^\op(v)A-A\pi^\op(v))\I-\sigma_{-i/2}(\pi^\op(v)A-A\pi^\op(v))^{TL}\I\\
&=\sigma_{i/2}\circ(\pi^\op(v)A-A\pi^\op(v))\I-\sigma_{-i/2}(A^\dagger \pi(\sigma_{i/2}(v))-\pi(\sigma_{i/2}(v))A^\dagger)\I\\
&=\sigma_{i/2}((A\I)\sigma_{-i/2}(v))-\sigma_{i/2}(A\sigma_{-i/2}(v))-\sigma_{-i/2}(A^\dagger \sigma_{i/2}(v))+\sigma_{-i/2}(\sigma_{i/2}(v) (A^\dagger \I))\\
&=(A\I)v+v(A^{\dagger}\I)-Av-A^\dagger v.
\end{align*}
Similarly,
\begin{align*}
\delta^2(\mathcal{K}_R)^{\dagger_R} \mathcal{K}_R(v)
&=(\mathcal{K}_R)^{\dagger_R}(\pi(v)A-A\pi(v))\\
&=(\pi(v)A-A\pi(v))\I-\sigma_i(A^\dagger \pi^\op(\sigma_{-i/2}(v))-\pi^\op(\sigma_{-i/2}(v))A^\dagger)\I\\
&=v(A\I)-Av-\sigma_i (A^\dagger \sigma_{-i}(v))+\sigma_i((A^\dagger \I)\sigma_{-i}(v))\\
&=v(A\I)+(A^{\dagger}\I)v-Av-A^\dagger v.
\end{align*}
\end{proof}
We conclude this section by establishing that the left and right Laplacians are antiunitary equivalent.
\begin{theorem}
    Let $(\mathcal{M},\Psi,A)$ be a quantum graph. Then the left and right Laplacians satisfy
    \[
    \Delta_L=J\Delta_RJ,
    \]
    where $J:\mathcal{M}\to\mathcal{M}$ is the antiunitary operator defined by
    \[
    J(v)=\sigma_{-i/2}(v^*).
    \]
\end{theorem}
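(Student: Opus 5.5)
We prove the identity by direct computation, using the explicit formulas for $2\Delta_L$ and $2\Delta_R$ from the preceding theorem. Since $\Delta_L$ acts on $\mathcal{M}^\op$, which has the same underlying vector space as $\mathcal{M}$, both sides are maps on that common space. We therefore check $\Delta_L(J v)=J\Delta_R(v)$ for all $v$; note that $J^2=\mathrm{id}$.

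The first step records the properties of $J$. We have $J(v)^*=\sigma_{i/2}(v)$, because $\sigma_z(x)^*=\sigma_{\bar z}(x^*)$. This gives $J^2(v)=\sigma_{-i/2}(\sigma_{i/2}(v))=v$, and also
\[
J(xy)=\sigma_{-i/2}(y^*)\sigma_{-i/2}(x^*)=J(y)J(x),
\]
so $J$ is an antimultiplicative conjugate-linear bijection. Antiunitarity follows from $\Psi(\sigma_{-i/2}(v)\sigma_{-i/2}(w^*))=\Psi(\sigma_{i/2}(w^*)\sigma_{-i/2}(v))=\Psi(w^*v)$ by the KMS condition, that is, $\langle Jw,Jv\rangle_\Psi=\langle v,w\rangle_\Psi$.

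The second step shows that $J$ commutes with $A$ and $A^\dagger$. Since $A$ is $*$-preserving, $A(v^*)=A(v)^*$. By the corollary to Lemma~\ref{sigma_commuting}, $A$ commutes with $\sigma_t$ for all complex $t$. Together these give $AJ=JA$. The same argument applies to $A^\dagger$, using condition \eqref{A:dagger:real}. Moreover, $J(\I)=\I$, and $A\I$ and $A^\dagger\I$ are self-adjoint, because $(A\I)^*=A(\I^*)=A\I$. Since $A$ commutes with $\sigma_t$ and $\sigma_t(\I)=\I$, we get $\sigma_t(A\I)=A\I$, and hence $J(A\I)=A\I$. Likewise $J(A^\dagger\I)=A^\dagger\I$.

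The third step is the main computation. We apply $J$ to
\[
2\delta^2\Delta_R(v)=(A^\dagger\I)v+v(A\I)-Av-A^\dagger v.
\]
By antimultiplicativity,
\[
J\bigl((A^\dagger\I)v\bigr)=J(v)\,J(A^\dagger\I)=J(v)(A^\dagger\I)=(A^\dagger\I)\cdot_\op J(v),
\]
and similarly $J\bigl(v(A\I)\bigr)=(A\I)J(v)=J(v)\cdot_\op(A\I)$. The last two terms become $AJv$ and $A^\dagger Jv$. Since $\delta$ is real, the prefactor $\delta^{-2}/2$ passes through the conjugate-linear map $J$ unchanged. The result is exactly the formula for $2\Delta_L(Jv)$, so $J\Delta_R=\Delta_L J$. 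Multiplying on the right by $J$ and using $J^2=\mathrm{id}$ gives $\Delta_L=J\Delta_RJ$.

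The main obstacle is bookkeeping in the opposite-algebra product. We must check that the order reversal produced by $J$ matches the $\cdot_\op$ placement in the formula for $\Delta_L$. We also need the invariance $\sigma_{-i/2}(A\I)=A\I$, which depends on the commutation of $A$ with the modular group for complex parameters.
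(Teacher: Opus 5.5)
Your proof is correct and follows essentially the same route as the paper: both rest on $[J,A]=[J,A^\dagger]=0$ and on $J$ turning left multiplication by the degree elements $A\I$, $A^\dagger\I$ into right multiplication (the paper's $J\pi(A\I)=\pi^{\op}(A\I)J$). You justify that step more explicitly, via the antimultiplicativity of $J$ together with $J(A\I)=A\I$ and $J(A^\dagger\I)=A^\dagger\I$. The one slip is in your side check of antiunitarity, where the modular subscripts are garbled; it should read $\langle Jv,Jw\rangle_\Psi=\Psi(\sigma_{i/2}(v)\sigma_{-i/2}(w^*))=\Psi(\sigma_{-i/2}(w^*)\sigma_{-i/2}(v))=\Psi(w^*v)=\langle w,v\rangle_\Psi$, and that remark is not used in proving the identity anyway.
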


\begin{proof}
Since both $A$ and $A^\dagger$ commute with the involution and the modular automorphism group $\sigma_t$, the antiunitary operator $J$ commutes with both operators. Hence,
\[
[A,J]=0.
\]
Consequently, $J$ intertwines the left and right degree actions, giving
\[
J\pi(A\mathbb{I})=\pi^{\op}(A\I)J,
\]
and similarly for $A^\dagger$. These intertwining relations directly imply
\[
J\Delta_LJ=\Delta_R.
\]
\end{proof}

Since both $\Delta_L$ and $\Delta_R$ are self-adjoint operators with real spectra, we immediately obtain the following consequence.

\begin{corollary}
For any quantum graph $G=(\mathcal{M},\Psi,A)$, the left and right Laplacians have identical spectra:
\[
\operatorname{spec}(\Delta_L)=\operatorname{spec}(\Delta_R).
\]
\end{corollary}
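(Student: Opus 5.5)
The corollary should follow once we know that $\Delta_L$ and $\Delta_R$ are conjugate by a bijection that preserves the eigenvalue equation. Since $J$ is antiunitary rather than unitary, this conjugation on its own only matches eigenvalues up to complex conjugation, so reality of the spectra has to enter the argument.

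\emph{Step 1: transport eigenvectors.} By the preceding theorem, $\Delta_L=J\Delta_RJ$. First I check that $J$ is an involution, $J^2=\id$:
\[
J(J(v))=\sigma_{-i/2}\bigl(\sigma_{-i/2}(v^*)^*\bigr)=\sigma_{-i/2}\bigl(\sigma_{i/2}(v)\bigr)=v,
\]
where I use $\sigma_z(x)^*=\sigma_{\bar z}(x^*)$. Now let $\Delta_R v=\lambda v$ with $v\neq 0$. Then
\[
\Delta_L(Jv)=J\Delta_R J J v=J\Delta_R v=J(\lambda v)=\bar\lambda\, Jv ,
\]
because $J$ is antilinear. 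Moreover $Jv\neq 0$, since $J$ is bijective. This gives $\overline{\operatorname{spec}(\Delta_R)}\subseteq\operatorname{spec}(\Delta_L)$. The same computation with the roles of $\Delta_L$ and $\Delta_R$ exchanged, using $\Delta_R=J\Delta_LJ$, gives the reverse inclusion.

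\emph{Step 2: use reality of the spectra.} Each Laplacian has the form $\tfrac12\mathcal{K}^{\dagger}\mathcal{K}$, where the adjoint is taken with respect to a genuine inner product: $\langle\cdot,\cdot\rangle_L$ or $\langle\cdot,\cdot\rangle_R$ on the target, and the GNS inner product on the source. So each Laplacian is self-adjoint and positive on its own GNS space, and its spectrum is real. Hence $\bar\lambda=\lambda$, and the two spectra coincide. If multiplicities are also wanted, note that $J$ maps each eigenspace of $\Delta_R$ bijectively onto the corresponding eigenspace of $\Delta_L$. Since $J$ is antilinear and bijective, it preserves real dimension, and therefore also complex dimension.

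\emph{Main obstacle.} The only delicate point is Step 2. The self-adjointness of $\Delta_L$ is with respect to $L^2(\mathcal{M}^\op,\Psi^\op)$, so I must make sure the adjoint $\mathcal{K}_L^{\dagger_L}$ was computed against that inner product and not against some non-standard pairing. If that turns out to be unclear, I would use a different route. Conjugation by the unitary $\iota$ identifies $L^2(\mathcal{M}^\op,\Psi^\op)$ with $L^2(\mathcal{M},\Psi)$. Alternatively, I would read positivity directly off the explicit formula: $2\delta^2\Delta$ equals the degree terms minus $A+A^\dagger$, which is symmetric in $A$ and $A^\dagger$, and this symmetry would show the spectrum is real without going through the incidence operators.
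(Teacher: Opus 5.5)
Your proposal is correct and follows the paper's own route. The paper likewise combines the antiunitary equivalence $\Delta_L=J\Delta_RJ$, which sends an eigenvalue $\lambda$ of $\Delta_R$ to $\bar\lambda$ for $\Delta_L$, with the reality of the spectra of the self-adjoint Laplacians $\tfrac12\mathcal{K}^{\dagger}\mathcal{K}$; your worry in the last paragraph is unnecessary, since the paper computes $\mathcal{K}_L^{\dagger_L}$ against $\langle\cdot,\cdot\rangle_\Psi$ on the source rather than $\Psi^{\op}$, and any genuine inner product there already makes $\Delta_L$ self-adjoint and positive, which is all you need.
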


\section{Comparison to Matsuda's Laplacian and incidence operator}
\label{sec:matsuda}

In \cite{Matsuda_2024}, a different notion of incidence operator (Definition~2.1), together with the corresponding Laplacian, was introduced. In this section, we provide a brief overview of that framework and conclude with a comparison between this construction and ours.

\begin{definition}
Let $(\mathcal{M}, \Psi, A)$ be a quantum graph. The Matsuda incidence operator 
$$\nabla_A : \mathcal{M} \to \mathcal{M} \otimes \mathcal{M}$$
is defined by
$$\nabla_A(x):=\delta^{-2}(A^\dagger \otimes \id-\id \otimes A)m^\dagger (x).$$
\end{definition}

Matsuda's formulation realizes the edge space inside the algebraic tensor product $\mathcal{M}\otimes\mathcal{M}$, whereas our framework describes it in terms of the operator space $B(L^2(\mathcal{M},\Psi))$. To establish a precise correspondence between these two viewpoints and construct a rigorous dictionary between the respective theories, we introduce a canonical linear identification map.

\begin{definition}\label{tenop}
The linear map $\zeta:B(L^2(\mathcal{M},\Psi))\to \mathcal{M}\otimes \mathcal{M}$ is given by
$$\zeta(T):=(\id \otimes T)m^\dagger(\I), \quad \text{for all } T\in B(L^2(\mathcal{M},\Psi)).$$
\end{definition}

The utility of this map lies in its clean action on rank-one operators, which allows for a straightforward inversion, as demonstrated below.

\begin{lemma} \label{zeta}
We have
$$\zeta(\ket{x}\bra{y})=\sigma_{-i}(y^*) \otimes x \quad \text{and} \quad \zeta^{-1}(y \otimes x)=\ket{x}\bra{\sigma_{-i}(y^*)}.$$
\end{lemma}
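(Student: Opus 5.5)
The plan is a direct computation on rank-one operators, using the explicit formula for $m^\dagger(\I)$ from Lemma~\ref{m-dagger-identities}.

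First, evaluate $\zeta(\ket{x}\bra{y})$. Taking $a=\I$ in Lemma~\ref{m-dagger-identities} gives
\[
m^\dagger(\I)=\sum_j \sigma_{-i}(\xi_j^*)\otimes \xi_j
\]
for any orthonormal basis $(\xi_j)$. Applying $\id\otimes\ket{x}\bra{y}$ yields
\[
\zeta(\ket{x}\bra{y})=\sum_j \langle y,\xi_j\rangle_\Psi\,\sigma_{-i}(\xi_j^*)\otimes x .
\]
It therefore suffices to show
\[
\sum_j \langle y,\xi_j\rangle\,\sigma_{-i}(\xi_j^*)=\sigma_{-i}(y^*).
\]
Since the maps $a\mapsto a^*$ and $\sigma_{-i}$ are antilinear and linear respectively, the left side equals $\sigma_{-i}\bigl((\sum_j \overline{\langle y,\xi_j\rangle}\,\xi_j)^*\bigr)$. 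Here $\overline{\langle y,\xi_j\rangle}=\langle \xi_j,y\rangle$, so the inner sum is the expansion $\sum_j\langle\xi_j,y\rangle\xi_j=y$. This gives $\sigma_{-i}(y^*)$, as required. This is essentially the argument of the first lemma of Section~\ref{sec:Schur}.

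Second, obtain the inverse formula. The map $\zeta$ is linear. Rank-one operators span $B(L^2(\mathcal{M},\Psi))$, and their images $\sigma_{-i}(y^*)\otimes x$ span $\mathcal{M}\otimes\mathcal{M}$, because $y\mapsto\sigma_{-i}(y^*)$ is a bijection of $\mathcal{M}$. Both spaces have dimension $(\dim\mathcal{M})^2$, so $\zeta$ is a linear isomorphism.

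To compute $\zeta^{-1}(y\otimes x)$, choose $y'$ with $\sigma_{-i}(y'^*)=y$, namely $y'=\sigma_{-i}(y)^*$. Using $\sigma_z(a)^*=\sigma_{\bar z}(a^*)$, we get $y'=\sigma_i(y^*)$. Then
\[
\zeta^{-1}(y\otimes x)=\ket{x}\bra{\sigma_i(y^*)}.
\]

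The main obstacle is reconciling this with the stated formula $\ket{x}\bra{\sigma_{-i}(y^*)}$. The discrepancy comes down to the sign convention in the identity $\sigma_z(a)^*=\sigma_{\bar z}(a^*)$ for $\sigma_z(a)=\rho^{iz}a\rho^{-iz}$. Directly, $(\rho^{iz}a\rho^{-iz})^*=\rho^{i\bar z}a^*\rho^{-i\bar z}$, since $(\rho^{iz})^*=\rho^{-i\bar z}$ and $(\rho^{-iz})^*=\rho^{i\bar z}$. This yields $\sigma_{-i}(y)^*=\sigma_{i}(y^*)$, which confirms the computation in the previous paragraph.

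I would therefore verify $\zeta(\ket{x}\bra{\sigma_{-i}(y^*)})=\sigma_{-i}(\sigma_i(y))\otimes x = y\otimes x$ directly from the first formula. This composition is clean only if $(\sigma_{-i}(y^*))^*=\sigma_i(y)$, which by the identity above is the correct direction. I will carefully recheck this conjugation step, since it decides whether the second formula holds as printed or requires $\sigma_i$ in place of $\sigma_{-i}$. Either way, the proof's structure is the two-line verification $\zeta\circ\zeta^{-1}=\id$ on simple tensors, extended by linearity.
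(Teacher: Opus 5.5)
Your computation of $\zeta(\ket{x}\bra{y})$ is correct and is essentially the paper's argument. The paper expands $m^\dagger(\I)=\sum_j \xi_j\otimes\xi_j^*$ and uses the KMS condition to rewrite $\langle y,\xi_j^*\rangle_\Psi=\langle \xi_j,\sigma_{-i}(y^*)\rangle_\Psi$. You instead use the other expansion $\sum_j\sigma_{-i}(\xi_j^*)\otimes\xi_j$ and pull $\sigma_{-i}$ and the involution out of the sum; the two routes are equivalent. Your dimension count showing that $\zeta$ is bijective is a useful addition, since the paper only says the inverse formula ``follows immediately''.

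The second part contains a genuine error and, as written, ends without a conclusion. When you solve $\sigma_{-i}(y'^*)=y$, you write $y'=\sigma_{-i}(y)^*$, which treats $\sigma_{-i}$ as its own inverse. It is not: $\sigma_{-i}(a)=\rho a\rho^{-1}$, and $\sigma_{-i}^{-1}=\sigma_i$ with $\sigma_i(a)=\rho^{-1}a\rho$. Applying $\sigma_i$ gives $y'^*=\sigma_i(y)$, and the (correct) identity $\sigma_z(a)^*=\sigma_{\bar z}(a^*)$ then gives
\[
y'=\sigma_i(y)^*=\sigma_{-i}(y^*).
\]
This is exactly the printed formula. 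The formula $\ket{x}\bra{\sigma_i(y^*)}$ you arrive at is false in general. By your first part, $\zeta(\ket{x}\bra{\sigma_i(y^*)})=\sigma_{-i}(\sigma_{-i}(y))\otimes x=\rho^{2}y\rho^{-2}\otimes x$, which differs from $y\otimes x$ unless $y$ commutes with $\rho$. So there is no sign-convention ambiguity to resolve: the conjugation identity you rechecked is fine, and the discrepancy comes entirely from this inversion slip. Your final paragraph's check, $\zeta(\ket{x}\bra{\sigma_{-i}(y^*)})=\sigma_{-i}(\sigma_i(y))\otimes x=y\otimes x$, is correct. Combined with the bijectivity of $\zeta$, it is the complete proof of the second formula. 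You should commit to it and discard the erroneous paragraph.
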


\begin{proof}
Let $(\xi_j)$ be any orthonormal basis of $L^2(\mathcal{M}, \Psi)$. Recall that
$$m^\dagger(\I)=\sum_j \xi_j \otimes \xi_j^*=\sum_j\sigma_{-i}(\xi_j^*)\otimes \xi_j.$$
By direct computation,
\begin{align*}
\zeta(\ket{x}\bra{y})
&=(\id \otimes \ket{x}\bra{y})m^\dagger(\I)\\
&=\sum_j \left(\xi_j\otimes \langle y, \xi_j^*\rangle x\right)\\
&=\sum_j \langle \xi_j, \sigma_{-i}(y^*)\rangle \, \xi_j \otimes x\\
&=\sigma_{-i}(y^*) \otimes x.
\end{align*}
The formula for $\zeta^{-1}$ follows immediately.
\end{proof}

Because the construction of a quantum combinatorial Laplacian fundamentally depends on the adjoints of $\mathcal{K}$ and $\nabla_A$, we must first determine how the tensor-to-operator identification from Definition~\ref{tenop} is compatible with the underlying Hilbert space structures. The following lemma computes the relevant Hermitian adjoints explicitly.

\begin{lemma}\label{zeta-1}
We have
$$\zeta^\dagger(y\otimes x)=\ket{\sigma_{-i}(x)}\bra{y^*}, \quad (\zeta^{-1})^\dagger(\ket{x}\bra{y})=y^* \otimes \sigma_i(x).$$
\end{lemma}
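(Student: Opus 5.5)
The plan is to verify both identities on elementary tensors and rank-one operators, and then extend by (sesqui)linearity. Every element of $\mathcal{M}\otimes\mathcal{M}$ is a sum of elementary tensors $y\otimes x$, and every element of $B(L^2(\mathcal{M},\Psi))$ is a sum of rank-one operators $\ket{x}\bra{y}$, so this suffices. On $\mathcal{M}\otimes\mathcal{M}$ we use the product scalar product $\langle\cdot,\cdot\rangle_{\Psi\otimes\Psi}$.

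On $B(L^2(\mathcal{M},\Psi))$ I will use the left scalar product $\langle\cdot,\cdot\rangle_L$ of Definition~\ref{R_L_scalar_product}, the one making $P_A$ orthogonal by Theorem~\ref{orthogonal_P_A}. The plain Hilbert--Schmidt pairing would leave an extra $\sigma_{-i}$ in the ket slot, so it cannot be the intended one. We note also that $\zeta$ is bijective by Lemma~\ref{zeta}, so $(\zeta^{-1})^\dagger=(\zeta^\dagger)^{-1}$ and the second formula will follow from the first.

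\emph{Step 1: the left-hand side.} By Lemma~\ref{zeta},
\[
\langle \zeta(\ket{x}\bra{y}),\, y'\otimes x'\rangle_{\Psi\otimes\Psi}=\langle\sigma_{-i}(y^*),y'\rangle_\Psi\,\langle x,x'\rangle_\Psi .
\]
The KMS condition rewrites the first factor: $\Psi(\sigma_{i}(y)y')=\Psi(y'y)=\langle y'^*,y\rangle_\Psi$.

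\emph{Step 2: the candidate adjoint.} For the candidate $\ket{\sigma_{-i}(x')}\bra{y'^*}$ we compute
\[
\langle \ket{x}\bra{y},\ket{\sigma_{-i}(x')}\bra{y'^*}\rangle_L=\mathrm{Tr}\big((\ket{\sigma_i(x)}\bra{y})^\dagger\ket{\sigma_{-i}(x')}\bra{y'^*}\big)=\langle\sigma_i(x),\sigma_{-i}(x')\rangle_\Psi\,\langle y'^*,y\rangle_\Psi .
\]
We have $\sigma_i^\dagger=\sigma_i$ (the Remark after Definition~\ref{R_L_scalar_product}) and $\sigma_i\sigma_{-i}=\mathrm{id}$. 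Hence the first factor equals $\langle x,x'\rangle_\Psi$, and the two sides of Steps~1 and~2 agree. This gives $\zeta^\dagger(y'\otimes x')=\ket{\sigma_{-i}(x')}\bra{y'^*}$.

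\emph{Step 3: inverting.} Solve $\zeta^\dagger(u\otimes v)=\ket{x}\bra{y}$, i.e.\ $\ket{\sigma_{-i}(v)}\bra{u^*}=\ket{x}\bra{y}$. Taking $v=\sigma_i(x)$ and $u=y^*$ gives $(\zeta^{-1})^\dagger(\ket{x}\bra{y})=(\zeta^\dagger)^{-1}(\ket{x}\bra{y})=y^*\otimes\sigma_i(x)$.

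The main obstacle is bookkeeping rather than depth. One must keep track of which slot carries the conjugate-linearity, and apply the KMS rule $\Psi(ab)=\Psi(b\sigma_{-i}(a))$ in the right direction. Above all, one must pin down which scalar product on $B(L^2(\mathcal{M},\Psi))$ is meant. If the authors intend plain Hilbert--Schmidt, the computation in Step~2 shows the formula needs a modular correction in the ket slot. So I would first check the tracial case, where all choices agree, before committing to $\langle\cdot,\cdot\rangle_L$.
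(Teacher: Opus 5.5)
Your proof is correct and follows essentially the same route as the paper: you pair rank-one operators against elementary tensors using Lemma~\ref{zeta}, the KMS condition and $\sigma_i^\dagger=\sigma_i$, with $\langle\cdot,\cdot\rangle_L$ on $B(L^2(\mathcal{M},\Psi))$, which is indeed the scalar product the paper uses. The only difference is that you get the second formula by inverting the first via $(\zeta^{-1})^\dagger=(\zeta^\dagger)^{-1}$, whereas the paper checks it by a separate direct computation using $\zeta^{-1}(b\otimes a)=\ket{a}\bra{\sigma_{-i}(b^*)}$; your shortcut is valid and slightly more economical.
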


\begin{proof}
Let $(\xi_j)$ be any orthonormal basis of $L^2(\mathcal{M},\Psi)$ and let $x,y,a,b\in \mathcal{M}$. By direct computation,
\begin{align*}
\langle \zeta^\dagger(y \otimes x), \ket{a}\bra{b}\rangle_L
&=\langle y \otimes x, \zeta (\ket{a}\bra{b})\rangle_{\Psi \otimes \Psi}\\
&=\langle y\otimes x, \sigma_{-i}(b^*)\otimes a\rangle_{\Psi \otimes \Psi}\\
&=\langle y, \sigma_{-i}(b^*)\rangle_\Psi \langle x, a \rangle_\Psi\\
&=\langle b, y^* \rangle_\Psi \langle x, a \rangle_\Psi\\
&=\mathrm{Tr}(\ket{y^*}\bra{x} \ket{a}\bra{b})\\
&=\mathrm{Tr}\big((\ket{y^*}\bra{\sigma_{-i}(x)}\circ \sigma_i) \ket{a}\bra{b}\big)\\
&=\langle \ket{\sigma_{-i}(x)}\bra{y^*}, \ket{a}\bra{b}\rangle_L.
\end{align*}
Moreover,
\begin{align*}
\langle(\zeta^{-1})^\dagger(\ket{x}\bra{y}), b \otimes a \rangle_{\Psi\otimes \Psi }
&= \langle \ket{x}\bra{y}, \zeta^{-1}(b \otimes a)\rangle_L\\
&=\langle \ket{x}\bra{y}, \ket{a}\bra{\sigma_{-i}(b^*)}\rangle_L\\
&=\langle\ket{\sigma_i(x)}\bra{y}, \ket{a}\bra{\sigma_{-i}(b^*)}\rangle_{HS}\\
&=\mathrm{Tr}(\ket{y}\bra{\sigma_{i}(x)} \ket{a}\bra{\sigma_{-i}(b^*)})\\[1ex]
&=\langle\sigma_i(x), a\rangle_\Psi \langle \sigma_{-i}(b^*), y \rangle_\Psi\\
&=\langle y^*, b \rangle_\Psi \langle\sigma_i(x), a \rangle_\Psi\\
&=\langle y^*\otimes \sigma_i(x), b \otimes a \rangle_{\Psi \otimes \Psi.}
\end{align*}
\end{proof}

Several properties of Matsuda's incidence operator are developed in \cite[Chapter~2]{Matsuda_2024}. Despite originating from distinct geometric viewpoints—one formulated through tensorial operations and the other through commutator expressions—the two incidence operators are intimately connected. The following proposition establishes that our operator $\mathcal{K}_L$ corresponds to Matsuda's operator $\nabla_A$ under a specific identification map.

\begin{proposition}The following holds:
$$\mathcal{K}_L=\zeta^{-1}\circ \nabla_A\circ \sigma_{-i/2}.$$
\end{proposition}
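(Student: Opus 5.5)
The plan is to prove the equivalent identity
$$\zeta(\mathcal{K}_L(a))=\nabla_A(\sigma_{-i/2}(a)),\qquad a\in\mathcal{M}^\op,$$
since $\zeta$ is invertible by Lemma~\ref{zeta}. Both sides carry the factor $\delta^{-2}$ and are differences of two terms. I will match the terms separately:
$$\zeta(A\pi^\op(a))=(\id\otimes A)\,m^\dagger(\sigma_{-i/2}(a)),\qquad \zeta(\pi^\op(a)A)=(A^\dagger\otimes\id)\,m^\dagger(\sigma_{-i/2}(a)).$$

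\emph{The easy term.} By Definition~\ref{tenop}, $\zeta(A\pi^\op(a))=(\id\otimes A)(\id\otimes\pi^\op(a))m^\dagger(\I)$. Using the second formula of Lemma~\ref{m-dagger-identities}, $m^\dagger(\I)=\sum_j\xi_j\otimes\xi_j^*$. Since $\pi^\op(a)$ acts by right multiplication by $\sigma_{-i/2}(a)$, we get
$$(\id\otimes\pi^\op(a))m^\dagger(\I)=\sum_j\xi_j\otimes\xi_j^*\sigma_{-i/2}(a)=m^\dagger(\sigma_{-i/2}(a)),$$
again by Lemma~\ref{m-dagger-identities}. This gives the first identity, including the correct sign.

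\emph{The hard term.} Here $(\id\otimes\pi^\op(a)A)m^\dagger(\I)$ has $A$ acting on the second leg, while Matsuda's formula has $A^\dagger$ on the first leg. The key step is a ``snake'' identity that moves an operator across the cup $m^\dagger(\I)$:
$$(\id\otimes T)m^\dagger(\I)=(\widehat T\otimes\id)m^\dagger(\I)$$
for an explicit $\widehat T$ built from a transposition of $T$ and possibly a modular twist.

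I will compute $\widehat T$ on rank-one operators. Lemma~\ref{zeta} gives $(\id\otimes\ket{x}\bra{y})m^\dagger(\I)=\sigma_{-i}(y^*)\otimes x$. On the other side, using $m^\dagger(\I)=\sum_j\sigma_{-i}(\xi_j^*)\otimes\xi_j$, expanding $x$ in the basis, and the KMS condition, I expect
$$\widehat{\ket{x}\bra{y}}=\ket{\sigma_{-i}(y^*)}\bra{\sigma_{-i}(x^*)}.$$
Comparing with Proposition~\ref{conjugate and transpose}, this is $\sigma_{-i}\circ(\ket{x}\bra{y})^{TL}\circ\sigma_{i}$, or a similar combination of a transposition with modular twists.

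Then I will proceed as follows.
\begin{itemize}
\item Apply the snake identity with $T=A$: $(\id\otimes A)m^\dagger(\I)=(\widehat A\otimes\id)m^\dagger(\I)$.
\item Apply $\id\otimes\pi^\op(a)$. It commutes with $\widehat A\otimes\id$ because the two act on different legs, so by the computation in the easy term
$$\zeta(\pi^\op(a)A)=(\widehat A\otimes\id)\,m^\dagger(\sigma_{-i/2}(a)).$$
\item Use that $A$ is real, so $A^{TL}=A^{TR}=A^\dagger$ by the corollary following Theorem~\ref{undirected_equivalence}.
\item Use that $A^\dagger$ commutes with $\sigma_t$ for all $t$ by the corollary to Lemma~\ref{sigma_commuting}, which removes the modular twists.
\end{itemize}
Together these give $\widehat A=A^\dagger$, and the two identities combine to the claim.

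The main obstacle is getting the exact modular twists in $\widehat T$ right. A slip there would leave a stray $\sigma_{\pm i}$ or $\sigma_{\pm i/2}$. My check is that the two twists must cancel only because $A$ commutes with $\sigma_t$. As a sanity check, in the tracial case ($\sigma_t=\id$) I will verify that $\widehat{\ket{x}\bra{y}}=\ket{y^*}\bra{x^*}$ and that this equals $(\ket{x}\bra{y})^{TL}$.
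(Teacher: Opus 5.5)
Your overall strategy is sound and differs from the paper's. However, the one explicit formula you commit to for $\widehat T$ is wrong, and taken literally it would break your last step.

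The paper works from the tensor side. It expands $\zeta^{-1}\nabla_A(x)$ in an orthonormal basis and rewrites $\bra{\sigma_{-i}((A^\dagger\xi_j)^*)}$ as $\bra{\sigma_{-i}(\xi_j^*)}A$. It then collapses $\sum_j\ket{\xi_j^*}\bra{\sigma_{-i}(\xi_j^*)}=\id$ to obtain $\rho(x)A-A\rho(x)$, where $\rho(x)y=yx$.

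You work from the operator side. Your easy term is exactly right, and the hard term does reduce to moving an operator across the cup $m^\dagger(\I)$. The problem is your guess $\widehat{\ket{x}\bra{y}}=\ket{\sigma_{-i}(y^*)}\bra{\sigma_{-i}(x^*)}$. Applied to $m^\dagger(\I)=\sum_j\sigma_{-i}(\xi_j^*)\otimes\xi_j$, it yields $\sigma_{-i}(y^*)\otimes\sigma_i(x)$ instead of $\sigma_{-i}(y^*)\otimes x$. That operator is $\sigma_{-i}\circ(\ket{x}\bra{y})^{TL}$. It would therefore give $\widehat A=\sigma_{-i}\circ A^\dagger$, a one-sided twist that no commutation with $\sigma_t$ can cancel.

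The computation you outline actually gives the following, via $\langle\xi_j,x\rangle=\Psi(x\,\sigma_{-i}(\xi_j^*))=\langle x^*,\sigma_{-i}(\xi_j^*)\rangle$ and Proposition~\ref{conjugate and transpose}:
\[
\widehat{\ket{x}\bra{y}}=\ket{\sigma_{-i}(y^*)}\bra{x^*}=(\ket{x}\bra{y})^{TR}.
\]
This agrees with your alternative expression $\sigma_{-i}\circ(\ket{x}\bra{y})^{TL}\circ\sigma_i$, because $\sigma_i^\dagger=\sigma_i$. So the snake identity is $(\id\otimes T)m^\dagger(\I)=(T^{TR}\otimes\id)m^\dagger(\I)$. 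Your last two bullets then collapse into one: $\widehat A=A^{TR}=A^\dagger$ by the corollary following Theorem~\ref{undirected_equivalence}, and there is no modular twist to remove.

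With this fix, your route isolates the single structural input, $A^{TR}=A^\dagger$. That input needs only that $A$ is $*$-preserving, since $M^{TR}=\overline{M}^{\,\dagger}$. Your route also explains why $A^\dagger$ lands on the first leg. The paper's route is a direct basis computation. There the same input enters through the termwise rewriting of $\bra{\sigma_{-i}((A^\dagger\xi_j)^*)}$, which is naturally justified by the reality of $A$.
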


\begin{proof}
From the proof of Theorem~\ref{leibnitz_rule}, we know that
$$\delta^2\mathcal{K}_L(x)=\pi^\op(x)A-A\pi^\op(x).$$
On the other hand,
\begin{align*}
\delta^2\zeta^{-1}(\nabla_A (x))
&=\zeta^{-1}(A^\dagger\otimes \id-\id \otimes A )m^\dagger(x) \\
&=\zeta^{-1}\Bigl(\sum\limits_j\bigl(A^\dagger \xi_j\otimes \xi_j^*x-\xi_j\otimes A \xi_j^* x\bigr)\Bigr)\\
&=\sum\limits_j\ket{\xi_j^* x}\bra{\sigma_{-i}((A^\dagger \xi_j)^*)}
-\ket{A \xi_j^*x}\bra{\sigma_{-i}(\xi_j^*)}\\
&=\sum\limits_j\rho(x)\ket{\xi_j^*}\bra{\sigma_{-i}(\xi_j^*)}A
-A\rho(x)\ket{\xi_j^*}\bra{\sigma_{-i}(\xi_j^*)}\\
&=\rho(x)A-A\rho(x),
\end{align*}
where $\rho(x)y=yx$ for all $x,y\in \mathcal{M}$.
Since $\rho \circ \sigma_{-i/2}=\pi^\op$, the proof is complete.
\end{proof}

This factorization allows us to establish a precise correspondence between the two frameworks and demonstrates that, despite their different formulations, they yield the same Laplacian operator.

First, note that by Lemmas~\ref{zeta} and \ref{zeta-1}, we have
$$(\zeta^{-1})^\dagger \zeta^{-1}=\sigma_i\otimes \sigma_i.$$
Moreover, since $A$ and $A^\dagger$ commute with all $\sigma_t$, we obtain
$$\mathcal{K}_L=\zeta^{-1}\circ(\sigma_{-i/2} \otimes \sigma_{-i/2})\circ \nabla_A.$$
We conclude by computing :
\begin{align*}
\mathcal{K}_L^\dagger \mathcal{K}_L
&=\nabla_A^\dagger (\sigma_{-i/2}\otimes \sigma_{-i/2})(\zeta^{-1})^\dagger \zeta^{-1}(\sigma_{-i/2}\otimes\sigma_{-i/2}) \nabla_A\\
&=\nabla_A^\dagger(\sigma_{-i/2}\otimes \sigma_{-i/2})(\sigma_{i}\otimes \sigma_{i})(\sigma_{-i/2}\otimes \sigma_{-i/2})\nabla_A\\
&=\nabla_A^\dagger \nabla_A.
\end{align*}

\section{Examples of the quantum Laplacian of tracial \texorpdfstring{$M_2$}{M2} quantum graphs}
\label{sec:examples}

In this section, we compute the Laplacians associated with quantum graphs defined on the $\mathrm{C}^*$-algebra $\mathrm{Mat}_2$ of $2\times2$ complex matrices. A classification of all such graphs, including both tracial and non-tracial cases, is given in \cite{Kiefer_2026}. Here, we restrict our attention to the tracial setting. We specify here to the left Laplacians $\Delta_L$ and denote them by $\Delta$.

Direct spectral analysis of operators acting on the $\mathrm{C}^*$-algebra $\mathrm{Mat}_2$ can be technically involved. To simplify the computations and render the spectral analysis more explicit, we identify $2\times2$ matrices with column vectors in $\mathbb{C}^4$.

\begin{definition}
    We define the linear map $\kappa:\Mat_2 \ni E_{ij}\mapsto e_{(i-1)2+j}\in \CC^4$, where $\{e_k\}$ is the canonical basis of $\CC^4$. In particular,
    \[ \kappa\left(\left[\begin{array}{cc}
        X_{11} & X_{12} \\
        X_{21} & X_{22} \\
    \end{array}\right]\right)=\left(\begin{array}{c}
         X_{11}  \\
         X_{12}  \\
         X_{21}  \\
         X_{22}  \\
    \end{array}\right). \]
    Note that $\kappa$ is an isometry with respect to the Hilbert-Schmidt inner product on $\Mat_2$ and the canonical inner product on $\CC^4$.
\end{definition}

To use the isometry $\kappa$ effectively in our spectral analysis, it is necessary to determine how it intertwines the algebraic operations of $\mathrm{Mat}_2$, in particular matrix multiplication and the adjoint operation, with their corresponding matrix representations on $\mathbb{C}^4$. The following two lemmas provide this dictionary explicitly.

\begin{lemma}
    For any $X, Y\in \Mat_2$, we have the following identities:
    \[ \kappa(XY)=(X\otimes \I)\kappa(Y)=(\I \otimes Y^T)\kappa(X). \]
\end{lemma}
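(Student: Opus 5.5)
This is the standard row-major vectorization identity, and the plan is to verify it on matrix units and extend by linearity. Since both sides of each identity are bilinear in $(X,Y)$, it suffices to take $X=E_{ij}$ and $Y=E_{kl}$. By definition $\kappa(E_{ij})=e_{(i-1)2+j}$, and under the standard identification $\CC^4\cong\CC^2\otimes\CC^2$ this is $e_i\otimes e_j$; that is, $\kappa(E_{ij})=e_i\otimes e_j$, with the first tensor factor recording the row index and the second the column index.

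For the first identity, we have $E_{ij}E_{kl}=\delta_{jk}E_{il}$, so $\kappa(E_{ij}E_{kl})=\delta_{jk}\,e_i\otimes e_l$. On the other side, viewing $E_{ij}$ as an operator on $\CC^2$ with $E_{ij}e_k=\delta_{jk}e_i$, we get
\[
(E_{ij}\otimes\I)(e_k\otimes e_l)=\delta_{jk}\,e_i\otimes e_l .
\]
The two sides agree.

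For the second identity, note that $E_{kl}^T=E_{lk}$, hence
\[
(\I\otimes E_{kl}^T)\kappa(E_{ij})=e_i\otimes E_{lk}e_j=\delta_{jk}\,e_i\otimes e_l ,
\]
which again matches $\kappa(E_{ij}E_{kl})$.

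The only point needing care is the convention: the ordering $(i-1)2+j$ must correspond to $e_i\otimes e_j$ under the Kronecker-product ordering, so that $X\otimes\I$ acts on the row index. Once this is fixed, both identities follow by bilinearity. No earlier results of the paper are needed.
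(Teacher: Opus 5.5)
Your proof is correct, but it is organized differently from the paper's. The paper does not reduce to matrix units. It takes general $X=\sum X_{ij}E_{ij}$ and $Y=\sum Y_{ij}E_{ij}$ and writes out $\kappa(XY)$ as an explicit vector in $\CC^4$. It then writes $X\otimes\I$ and $\I\otimes Y^T$ as explicit $4\times 4$ Kronecker matrices and checks entry by entry that all three resulting vectors coincide.

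You instead use bilinearity to reduce to $X=E_{ij}$, $Y=E_{kl}$, together with the identification $\kappa(E_{ij})=e_i\otimes e_j$. After that, each identity is a one-line check using $E_{ij}e_k=\delta_{jk}e_i$ and $E_{lk}e_j=\delta_{jk}e_l$.

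Your route is shorter and shows why the identity holds: $\kappa$ is just row-major vectorization. It also works verbatim for $\Mat_n$ with $\kappa(E_{ij})=e_{(i-1)n+j}$. The paper's brute-force computation is tied to $n=2$, but it displays the concrete block forms of $X\otimes\I$ and $\I\otimes Y^T$, and these same patterns reappear in the computation of $2\Delta_B$ in Lemma~\ref{Laplacian:formula}. Your caveat that the ordering $(i-1)2+j$ must match the Kronecker convention is the right point of care, and your convention agrees with the paper's explicit matrices.
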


\begin{proof}
    Let $X=\sum_{i,j}X_{ij}E_{ij}$ and $Y=\sum_{i,j}Y_{ij}E_{ij}$. By direct computation, we obtain
     \begin{align*}
        \kappa(XY)&=\kappa\left(\sum_{i,j,k} X_{ik} Y_{kj} E_{ij}\right)=\left(\begin{array}{c}
             X_{11}Y_{11}+X_{12}Y_{21}  \\
             X_{11}Y_{12}+X_{12}Y_{22}  \\
             X_{21}Y_{11}+X_{22}Y_{21}  \\
             X_{21}Y_{12}+X_{22}Y_{22}  \\
        \end{array}\right).
    \end{align*} 
    Furthermore, 
     \begin{align*}
        (X\otimes\I)\kappa(Y)&=\left[\begin{array}{cccc}
            X_{11} & 0 & X_{12} & 0 \\
            0 & X_{11} & 0 & X_{12} \\
            X_{21} & 0 & X_{22} & 0 \\
            0 & X_{21} & 0 & X_{22} \\
        \end{array}\right] \left(\begin{array}{c}
             Y_{11}  \\
             Y_{12}  \\
             Y_{21}  \\
             Y_{22}  \\
        \end{array}\right)=\left(\begin{array}{c}
             X_{11}Y_{11}+X_{12}Y_{21}  \\
             X_{11}Y_{12}+X_{12}Y_{22}  \\
             X_{21}Y_{11}+X_{22}Y_{21}  \\
             X_{21}Y_{12}+X_{22}Y_{22}  \\
        \end{array}\right),
    \end{align*} 
and finally,
\begin{align*}
    (\I \otimes Y^T)\kappa(X)&=\left[\begin{array}{cccc}
        Y_{11} & Y_{21} & 0 & 0 \\
        Y_{12} & Y_{22} & 0 & 0 \\
        0 & 0 & Y_{11} & Y_{21} \\
        0 & 0 & Y_{12} & Y_{22} \\
    \end{array}\right] \left(\begin{array}{c}
         X_{11}  \\
         X_{12}  \\
         X_{21}  \\
         X_{22}  \\
    \end{array}\right)=\left(\begin{array}{c}
             X_{11}Y_{11}+X_{12}Y_{21}  \\
             X_{11}Y_{12}+X_{12}Y_{22}  \\
             X_{21}Y_{11}+X_{22}Y_{21}  \\
             X_{21}Y_{12}+X_{22}Y_{22}  \\
        \end{array}\right).
\end{align*} 
\end{proof}

\begin{lemma}
    For any $B=\sum_{i,j} B_{ij}E_{ij}\in \Mat_4$, we have 
    \[ \left(\kappa^{-1}(B^\dagger\kappa(\I))\right)^T=\left[\begin{array}{cc}
            \overline{B_{11}}+\overline{B_{41}} &  \overline{B_{13}}+\overline{B_{43}}\\
            \overline{B_{12}}+\overline{B_{42}} & \overline{B_{14}}+\overline{B_{44}}
        \end{array}\right] \]
        and
        \[ \kappa^{-1}(B\kappa(\I))=\left[\begin{array}{cc}
            B_{11}+B_{14} & B_{21}+B_{24} \\
            B_{31}+B_{34} & B_{41}+B_{44}
        \end{array}\right]. \]
\end{lemma}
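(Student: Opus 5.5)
This is a direct computation in coordinates, so the plan is to compute the two vectors $B\kappa(\I)$ and $B^\dagger\kappa(\I)$ explicitly and then apply $\kappa^{-1}$ (and, for the first formula, a transpose).

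First, by the definition of $\kappa$ we have $\kappa(\I)=\kappa(E_{11}+E_{22})=e_1+e_4$.

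For the second formula, $B\kappa(\I)=Be_1+Be_4$ is the sum of the first and fourth columns of $B$. Its components are $(B_{11}+B_{14},\,B_{21}+B_{24},\,B_{31}+B_{34},\,B_{41}+B_{44})^T$. Applying $\kappa^{-1}$ places components $1,2$ in the first row and $3,4$ in the second row, which gives exactly the claimed matrix.

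For the first formula, we use $(B^\dagger)_{kl}=\overline{B_{lk}}$. Then $B^\dagger(e_1+e_4)$ has $k$-th component $\overline{B_{1k}}+\overline{B_{4k}}$, for $k=1,\dots,4$.

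Applying $\kappa^{-1}$ yields the $2\times 2$ matrix
\[
\left[\begin{array}{cc}\overline{B_{11}}+\overline{B_{41}} & \overline{B_{12}}+\overline{B_{42}}\\ \overline{B_{13}}+\overline{B_{43}} & \overline{B_{14}}+\overline{B_{44}}\end{array}\right].
\]
Transposing swaps the off-diagonal entries, which produces the stated formula.

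The only point requiring care, and the only plausible source of error, is the index bookkeeping. One must keep track of the adjoint's conjugate-transposition, the row-major ordering used by $\kappa$, and the final transpose, all together. I would double-check this by testing on $B=E_{12}$: here $B^\dagger\kappa(\I)=e_2$, so $\kappa^{-1}$ gives $E_{12}$, and its transpose is $E_{21}$. The formula predicts the entry $\overline{B_{12}}=1$ in position $(2,1)$, which agrees.
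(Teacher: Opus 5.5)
Your proposal is correct and follows essentially the same route as the paper: write $\kappa(\I)=e_1+e_4$, compute the vectors $B^\dagger\kappa(\I)$ and $B\kappa(\I)$ componentwise, then apply $\kappa^{-1}$ in row-major order (and transpose for the first identity). Your index bookkeeping and the $B=E_{12}$ check are both right.
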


\begin{proof}
    By direct computation,
     \begin{align*}
        \left(\kappa^{-1}(B^{\dagger}\kappa(\I))\right)^T&= \left(\kappa^{-1}\left(\begin{array}{c}
             \overline{B_{11}}+\overline{B_{41}}  \\
             \overline{B_{12}}+\overline{B_{42}}  \\
             \overline{B_{13}}+\overline{B_{43}}  \\
             \overline{B_{14}}+\overline{B_{44}}  \\
        \end{array}\right)\right)^T=\left[\begin{array}{cc}
            \overline{B_{11}}+\overline{B_{41}} &  \overline{B_{13}}+\overline{B_{43}}\\
            \overline{B_{12}}+\overline{B_{42}} & \overline{B_{14}}+\overline{B_{44}}
        \end{array}\right].
    \end{align*} 
    The proof of the second identity is analogous.
\end{proof}

The following corollary is a straightforward consequence of the above lemmas.

\begin{corollary} 
    Let $(\Mat_2, 2\mathrm{Tr}, A)$ be a quantum graph. Then its Laplacian $\Delta$ is given by the formula
    \[ \Delta=\kappa^{-1}(\Delta_{\kappa A \kappa^{-1}})\kappa. \]
    In particular, $\mathrm{spec}(\Delta)=\mathrm{spec}(\Delta_{\kappa A \kappa^{-1}})$. Because 
    \[2\Delta(v)=(A\I)v+v(A^\dagger \I)-A-A^\dagger\]
    we have
    \begin{equation}\label{k-k:laplacian}
    2\Delta_{\kappa A \kappa^{-1}}=(A\I)\otimes \id + \id \otimes (A^\dagger \I)^T-\kappa A \kappa^{-1}-\kappa A^\dagger \kappa^{-1}
    \end{equation}
\end{corollary}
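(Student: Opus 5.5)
The plan is to transport the explicit formula for $\Delta_L$ from the preceding theorem through the isometry $\kappa$, using the two dictionary lemmas just proved. In the tracial case $(\Mat_2,2\mathrm{Tr})$ the modular group is trivial, $\sigma_t=\id$, so $\pi^\op(a)b=ba$. The earlier formula for $2\Delta_L$ then reduces to
\[
2\Delta(v)=\delta^{-2}\bigl(v(A^\dagger\I)+(A\I)v-Av-A^\dagger v\bigr),
\]
with products read in $\mathcal{M}$ after unwinding $\cdot_\op$. Here $\delta^2=1$ for $\Psi=2\mathrm{Tr}$: with respect to the orthonormal basis $\{\sqrt{2}E_{ij}\}$, Corollary~\ref{xi_j_xi_j*} gives $\sum_j\xi_j\xi_j^*=2\cdot 2\cdot\tfrac12\I\cdot\ldots$. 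I would check this normalization carefully, since it is one place where a constant could slip. Note also that the displayed formula in the statement has the typo $A$ for $Av$; I would read it as $Av$.

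The first step is to define $\Delta_{\kappa A\kappa^{-1}}:=\kappa\Delta\kappa^{-1}$. I would then identify each term by conjugating with $\kappa$.

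\emph{Degree terms.} Left multiplication by $A\I$ becomes $(A\I)\otimes\I$ by the first lemma, via $\kappa(XY)=(X\otimes\I)\kappa(Y)$. Right multiplication by $A^\dagger\I$ becomes $\I\otimes(A^\dagger\I)^T$, via $\kappa(XY)=(\I\otimes Y^T)\kappa(X)$.

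\emph{Adjacency terms.} These are immediate: $\kappa A\kappa^{-1}$ and $\kappa A^\dagger\kappa^{-1}$. Since $\kappa$ is an isometry between Hilbert--Schmidt and canonical inner products, and $2\mathrm{Tr}$ differs from $\mathrm{Tr}$ only by a scalar, we get $\kappa A^\dagger\kappa^{-1}=(\kappa A\kappa^{-1})^\dagger$. Adjoints are therefore preserved.

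Summing the four terms gives \eqref{k-k:laplacian}. Because $\Delta$ and $\Delta_{\kappa A\kappa^{-1}}$ are similar via the bijection $\kappa$, the spectral equality is immediate.

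The second lemma then serves only to express $A\I$ and $(A^\dagger\I)^T$ concretely in terms of the $4\times4$ entries of $B=\kappa A\kappa^{-1}$. This is a convenience for the subsequent examples, not a logical necessity.

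The main obstacle I anticipate is bookkeeping rather than any new idea. One issue is matching the order of the opposite products: $(A^\dagger\I)\cdot_\op v=v(A^\dagger\I)$ must land on the right-multiplication slot, giving the transpose. The other is confirming that $\delta=1$ for $2\mathrm{Tr}$, so that no stray $\delta^{-2}$ appears.
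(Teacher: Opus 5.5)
Your proposal is correct and follows the paper's route. You specialize the formula for $2\Delta_L$ to the trivial modular group with $\delta=1$, then conjugate each term by $\kappa$ using $\kappa(XY)=(X\otimes\I)\kappa(Y)=(\I\otimes Y^T)\kappa(X)$ and the fact that $\kappa$ is a scalar multiple of a unitary. One small slip: the orthonormal basis for $2\mathrm{Tr}$ is $\{E_{ij}/\sqrt{2}\}$, not $\{\sqrt{2}E_{ij}\}$ (those vectors have norm $2$); with the correct basis, $\sum_{i,j}\xi_{ij}\xi_{ij}^*=\tfrac12\sum_{i,j}E_{ii}=\I$, which confirms $\delta=1$.
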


\begin{remark}
    Note that because $2\Tr$ has a trivial automorphism group, all tracial graphs presented in \cite{Kiefer_2026} are also graphs in the sense of Definition~\ref{q-graph:definition}. Hence, their Laplacians are given by formula \eqref{k-k:laplacian}.
\end{remark}

\begin{lemma}\label{Laplacian:formula}
    For any $B=\sum_{i,j}B_{ij}E_{ij}$, we have
    \[
    2\Delta_B=
    \resizebox{0.9\textwidth}{!}{ $
    \left[\begin{array}{cccc}
        B_{14} + \overline{B_{41}} & \overline{B_{13}} + \overline{B_{43}} - B_{12} - \overline{B_{21}} & B_{21} + B_{24} - B_{13} - \overline{B_{31}} & -B_{14} - \overline{B_{41}} \\
        \overline{B_{42}} - B_{21} & B_{11} + B_{14} + \overline{B_{14}} + \overline{B_{44}} - B_{22} - \overline{B_{22}} & -B_{23} - \overline{B_{32}} & B_{21} - \overline{B_{42}} \\
        B_{34} - \overline{B_{13}} & -B_{32} - \overline{B_{23}} & B_{41} + B_{44} + \overline{B_{11}} + \overline{B_{41}} - B_{33} - \overline{B_{33}} & \overline{B_{13}} - B_{34} \\
        -B_{41} - \overline{B_{14}} & B_{31} + B_{34} - B_{42} - \overline{B_{24}} & \overline{B_{12}} + \overline{B_{42}} - B_{43} - \overline{B_{34}} & B_{41} + \overline{B_{14}}
    \end{array}\right]
    $ }
    \]
\end{lemma}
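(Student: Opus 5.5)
Here $B$ plays the role of $\kappa A\kappa^{-1}$, so $2\Delta_B$ means the right-hand side of \eqref{k-k:laplacian} with $\kappa A\kappa^{-1}$ replaced by $B$:
\[
2\Delta_B=(\kappa^{-1}B\kappa(\I))\otimes \id+\id\otimes\bigl(\kappa^{-1}(B^\dagger\kappa(\I))\bigr)^T-B-B^\dagger .
\]
The plan is to assemble this $4\times4$ matrix term by term and then subtract. We keep in mind that $A\I$ in \eqref{k-k:laplacian} is to be read as the $2\times2$ matrix $\kappa^{-1}(B\kappa(\I))$, acting by left multiplication. The first lemma on $\kappa$ turns left multiplication by $X$ into $X\otimes\I$ and right multiplication by $Y$ into $\I\otimes Y^T$.

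First, from the second lemma on $\kappa$ we have
\[
D_1:=\kappa^{-1}(B\kappa(\I))=\left[\begin{array}{cc}B_{11}+B_{14}&B_{21}+B_{24}\\B_{31}+B_{34}&B_{41}+B_{44}\end{array}\right].
\]
Its Kronecker product $D_1\otimes\id$ has entries $(D_1)_{ik}\delta_{jl}$ at position $((i-1)2+j,(k-1)2+l)$, using the ordering $e_1\leftrightarrow E_{11}, e_2\leftrightarrow E_{12},\dots$.

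Second, the right multiplication by $v\mapsto v(A^\dagger\I)$ becomes $\id\otimes (A^\dagger\I)^T$. The same lemma gives $(A^\dagger\I)^T$ directly as
\[
D_2:=\left[\begin{array}{cc}\overline{B_{11}}+\overline{B_{41}}&\overline{B_{13}}+\overline{B_{43}}\\\overline{B_{12}}+\overline{B_{42}}&\overline{B_{14}}+\overline{B_{44}}\end{array}\right],
\]
and $\id\otimes D_2$ is block diagonal with two copies of $D_2$.

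Third, we subtract $B+B^\dagger$, whose $(p,q)$ entry is $B_{pq}+\overline{B_{qp}}$. Adding the three contributions gives all sixteen entries, and simplifying the diagonal ones gives the stated form. For example, the $(1,1)$ entry is $B_{11}+B_{14}+\overline{B_{11}}+\overline{B_{41}}-B_{11}-\overline{B_{11}}=B_{14}+\overline{B_{41}}$.

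The main obstacle is bookkeeping: getting the Kronecker index conventions right and applying the transpose in $D_2$ correctly. An off-by-transpose error would swap entries such as $\overline{B_{13}}+\overline{B_{43}}$ and $\overline{B_{12}}+\overline{B_{42}}$ in the $(1,2)$ and $(3,4)$ positions. I would check each entry against the classical sanity case $B=\id$ (so $A=\id$, which should give $\Delta=0$) and against the complete graph.

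Some entries of the displayed matrix do not look manifestly derivable from the raw formula. For instance, the $(2,1)$ entry should come out as $(D_1\otimes\id)_{21}+(\id\otimes D_2)_{21}-B_{21}-\overline{B_{12}}=0+(\overline{B_{12}}+\overline{B_{42}})-B_{21}-\overline{B_{12}}=\overline{B_{42}}-B_{21}$, which matches. Other entries may genuinely require the relations $B=B^{TL}$ and $B$ being $*$-preserving. These make $B$ self-adjoint (Theorem~\ref{undirected_equivalence}) and impose the Schur-idempotent constraints, and I would use them to reconcile any entry that does not match verbatim.
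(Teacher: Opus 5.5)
Your proposal is correct and is the same direct computation as the paper's: the paper writes $2\Delta_B=D_1\otimes\id+\id\otimes D_2-(B+B^\dagger)$, with exactly your $D_1$ and $D_2$, and adds the three $4\times4$ matrices entrywise. Your closing hedge is unnecessary, and the relation $B=B^{TL}$ would not be available anyway, since the graphs need not be undirected; all sixteen entries come out verbatim from the raw formula, so the identity holds for an arbitrary $B\in\Mat_4$.
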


\begin{proof}
    Again, by direct computation, we obtain
    \[
    \resizebox{\textwidth}{!}{ $
    \begin{aligned}
        2\Delta_B &= \left[\begin{array}{cccc}
            B_{11}+B_{14} & 0 & B_{21}+B_{24} & 0 \\
            0 & B_{11}+B_{14} & 0 & B_{21}+B_{24} \\
            B_{31}+B_{34} & 0 & B_{41}+B_{44} & 0 \\
            0 & B_{31}+B_{34} & 0 & B_{41}+B_{44} \\
        \end{array}\right]\\
        &\quad +\left[\begin{array}{cccc}
            \overline{B_{11}}+\overline{B_{41}} & \overline{B_{13}}+\overline{B_{43}} & 0 & 0 \\
             \overline{B_{12}}+\overline{B_{42}} & \overline{B_{14}}+\overline{B_{44}} & 0 & 0 \\
             0 & 0 & \overline{B_{11}}+\overline{B_{41}} & \overline{B_{13}}+\overline{B_{43}}\\
             0 & 0 & \overline{B_{12}}+\overline{B_{42}} & \overline{B_{14}}+\overline{B_{44}}
        \end{array}\right]\\
        &\quad -\left[\begin{array}{cccc}
            B_{11}+\overline{B_{11}} & B_{12}+\overline{B_{21}} &B_{13}+\overline{B_{31}} &B_{14}+\overline{B_{41}} \\
            B_{21}+\overline{B_{12}} & B_{22}+\overline{B_{22}} & B_{23}+\overline{B_{32}} &B_{24}+\overline{B_{42}}\\
            B_{31}+\overline{B_{13}} & B_{32}+\overline{B_{23}} & B_{33}+\overline{B_{33}} & B_{34}+\overline{B_{43}}\\
            B_{41}+\overline{B_{14}} & B_{42}+\overline{B_{24}} & B_{43}+\overline{B_{34}} & B_{44}+\overline{B_{44}}\\
        \end{array}\right]\\
        &= \left[\begin{array}{cccc}
            B_{14} + \overline{B_{41}} & \overline{B_{13}} + \overline{B_{43}} - B_{12} - \overline{B_{21}} & B_{21} + B_{24} - B_{13} - \overline{B_{31}} & -B_{14} - \overline{B_{41}} \\
            \overline{B_{42}} - B_{21} & B_{11} + B_{14} + \overline{B_{14}} + \overline{B_{44}} - B_{22} - \overline{B_{22}} & -B_{23} - \overline{B_{32}} & B_{21} - \overline{B_{42}} \\
            B_{34} - \overline{B_{13}} & -B_{32} - \overline{B_{23}} & B_{41} + B_{44} + \overline{B_{11}} + \overline{B_{41}} - B_{33} - \overline{B_{33}} & \overline{B_{13}} - B_{34} \\
            -B_{41} - \overline{B_{14}} & B_{31} + B_{34} - B_{42} - \overline{B_{24}} & \overline{B_{12}} + \overline{B_{42}} - B_{43} - \overline{B_{34}} & B_{41} + \overline{B_{14}}
        \end{array}\right]
    \end{aligned}
    $ }
    \]
\end{proof}

A rigorous classification of quantum graphs requires an appropriate parametrization of their edge structures. In order to describe families of adjacency operators uniquely and eliminate redundancies caused by graph isomorphisms, we introduce carefully chosen index sets. The parameters $\alpha$ and $\beta$ associated with these sets provide a coordinate system for the resulting parameter space of quantum graphs.

\begin{definition}
    We define the set $J^{(1B)}:=\{(\alpha, \beta): (*)\text{ is fulfilled}\}$, where
    \[ (*)\quad 
    \begin{array}{lcr}
        \beta=1 &\implies &\alpha\in [0, \infty)\\
        \beta\in [0, 1) &\implies& \arg(\alpha) \in [0, \pi)
    \end{array}. \]
\end{definition}

\begin{theorem}\cite[ Theorem 5.2]{Kiefer_2026} Any unit tracial quantum graph on $\Mat_2$ with one quantum edge is isomorphic to either $\left(\Mat_2, 2\Tr, A^{(1A)}\right)$ or exactly one quantum graph of the form $\left(\Mat_2, 2\Tr, A^{(1B)}_{\alpha, \beta}\right)$, where $(\alpha, \beta)\in J^{(1B)}$,
\[ \kappa A^{(1A)}\kappa^{-1}:=\left[\begin{array}{cccc}
    1 & 0 & 0 & 0 \\
    0 & 1 & 0 & 0 \\
    0 & 0 & 1 & 0 \\
    0 & 0 & 0 & 1 \\
\end{array}\right], \quad \kappa A^{(1B)}_{\alpha,\beta}\kappa^{-1}:=c^{-1}\left[\begin{array}{cccc}
    |\alpha|^2 & \alpha \beta_+ & \overline{\alpha}\beta_+ & \beta_+^2 \\
    \alpha \beta_- & |\alpha|^2 & \beta_-\beta_+ & \overline{\alpha}\beta_+ \\
    \overline{\alpha}\beta_- & \beta_-\beta_+ & |\alpha|^2 & \alpha \beta_+ \\
    \beta_-^2 & \overline{\alpha}\beta_- & \alpha \beta_- & |\alpha|^2 \\
\end{array}\right], \]
 $\beta_{\pm}=1\pm \beta$, and $c=|\alpha|^2+1+\beta^2$. Moreover, the graph $\left(\Mat_2, 2\Tr, A^{(1B)}_{\alpha, \beta}\right)$ is:
 \begin{itemize}
     \item undirected if and only if $\alpha\in \RR$ and $\beta=0$,
     \item never reflexive,
     \item loop-free if and only if $\alpha=0$.
 \end{itemize}
 Hence, the only loop-free one-edge quantum graph corresponds to the adjacency matrix $A^{(1B)}_{0,0}$.
\end{theorem}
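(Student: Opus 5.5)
The statement has two parts: the classification itself, quoted from \cite[Theorem~5.2]{Kiefer_2026}, and the three listed properties. I would prove the properties directly from the explicit matrices. For the classification I would only indicate how to reconstruct it with the tools developed above.

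\emph{Classification.} The first step is to note that for $\Psi=2\Tr$ on $\Mat_2$ the system is unit ($\delta=1$) and tracial, so $\sigma_t=\id$. In that case real and $*$-preserving coincide, and $A^\dagger=A^{TL}=A^{TR}$. The next step is to transport $A$ through $\zeta$ of Definition~\ref{tenop}, or equivalently through the Choi-type identification of $B(L^2(\Mat_2))$ with $\Mat_2\otimes\Mat_2^{\op}\cong\Mat_4$. By Theorem~\ref{schur_product}, the Schur product becomes ordinary multiplication there. Hence the condition $A\cdot_S A=A$ says that the image of $A$ is an idempotent, and, using reality, a self-adjoint projection $p$. ``One quantum edge'' then means $\operatorname{rank}p=1$, i.e.\ $p=\ket{v}\bra{v}$ for a unit vector $v\in\CC^2\otimes\CC^2$. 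The $*$-preserving condition becomes the requirement that $p$ be fixed by the antiunitary flip induced by $a\mapsto a^*$, which fixes $v$ up to a phase.

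Graph isomorphisms are $*$-automorphisms $\mathrm{Ad}\,u$ of $\Mat_2$; they act on $v$ by $u\otimes\bar u$. So the remaining task is to find a normal form for $v$ under this action together with the reality constraint. I would use the Schmidt decomposition of $v$, viewed as a $2\times2$ matrix $V$, with $u$ acting as $V\mapsto uVu^{T}$. The maximally entangled case, $V$ unitary up to scale, gives $A^{(1A)}=\id$. In the remaining cases one diagonalises the parts of $V$ available under unitary congruence, keeping a phase parameter $\alpha$ and an asymmetry parameter $\beta$. This gives the matrix $A^{(1B)}_{\alpha,\beta}$ with normalisation $c=|\alpha|^2+1+\beta^2$. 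I expect the \textbf{main obstacle} to be proving that the residual freedom is exactly what the index set $J^{(1B)}$ encodes, i.e.\ that distinct points of $J^{(1B)}$ give non-isomorphic graphs. I would try to do this by producing invariants: the spectrum of $A$ and of $A\I$, and $\Tr(A^2)$, each computed from the displayed matrix. In the worst case I would simply cite \cite{Kiefer_2026} for this step.

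\emph{Properties.} For undirectedness, $A$ is real, so by Theorem~\ref{undirected_equivalence} it is undirected if and only if $A=A^\dagger$. Comparing the displayed matrix with its conjugate transpose gives $\alpha\beta_+=\overline{\alpha}\beta_-$, $\beta_+^2=\beta_-^2$ and $\overline{\alpha}\beta_+=\alpha\beta_-$. The second identity forces $\beta=0$, and the others then force $\alpha\in\RR$. For loops, Proposition~\ref{LR_representation}(i) gives $m(A\otimes\id)m^\dagger=\pi(x)$ with
\[
x=\sum_j A(\xi_j)\xi_j^*=\tfrac12\sum_{k,l}A(E_{kl})E_{lk}.
\]
Evaluating this with $\kappa$ shows that $x$ is a multiple of $\I$ proportional to $|\alpha|^2/c$ (up to a positive constant). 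Hence $x=0$ exactly when $\alpha=0$, and $x=\I$ never occurs because $|\alpha|^2<c$. The final sentence of the statement then follows by combining the loop-free characterisation with undirectedness at $\alpha=\beta=0$.
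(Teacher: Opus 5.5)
The paper gives no proof of this statement. It is quoted from \cite[Theorem~5.2]{Kiefer_2026}, so your fallback of citing it for the classification is exactly what the paper does. The reconstruction you sketch would, however, not produce the stated list, for three concrete reasons.

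(1) You use $*$-preservation twice. In the tracial case, $\zeta(M_1\cdot_S M_2)=\zeta(M_1)\zeta(M_2)$ in $\Mat_2^{\op}\otimes\Mat_2$ and $\zeta(\overline{M})=\zeta(M)^*$ with $\overline{M}(a)=M(a^*)^*$. So $*$-preservation is \emph{exactly} the condition $p=p^*$. Moreover $\sigma_t=\id$ makes $A^\dagger$ automatically $*$-preserving, so nothing further is imposed on $v$. Your extra requirement, that $v$ be fixed up to a phase by the antiunitary $V\mapsto V^*$, is in fact equivalent to $A=A^\dagger$, i.e.\ to undirectedness. Imposing it would discard every directed member of the family ($\beta\neq0$ or $\alpha\notin\RR$).

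(2) The group action is wrong. If $\mathrm{Ad}\,u$ acts on $v$ by $u\otimes\bar u$, it acts on $V$ by unitary \emph{similarity} $V\mapsto uVu^*$, not by congruence. In this picture $A^{(1A)}=\id$ corresponds to $V\propto\I$, which every automorphism must fix. Yet congruence by $u=\mathrm{diag}(1,i)$ sends it to a traceless $V$, i.e.\ to an irreflexive graph.

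(3) Independently of conventions, the dichotomy is not ``maximally entangled or not''. $A^{(1B)}_{0,0}$ comes from $V\propto\begin{bmatrix}0&1\\1&0\end{bmatrix}$, and every $A^{(1B)}_{it,0}$ with $t\ge0$ comes from a $V$ proportional to a unitary. Schmidt coefficients are invariants of the larger group $u_1\otimes u_2$ and are too coarse here.

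A correct version of your route runs as follows. A rank-one $p$ is the same as $A(z)=2WzW^*$ with $\Tr(W^*W)=1$ and $W$ unique up to a phase, and $\mathrm{Ad}\,u$ acts by $W\mapsto uWu^*$. Then $A^{(1A)}$ is exactly the case of scalar $W$. A unitary-similarity normal form modulo phase for non-scalar $W$ gives $A^{(1B)}_{\alpha,\beta}$ with $W=(2c)^{-1/2}\begin{bmatrix}\alpha&\beta_+\\\beta_-&\alpha\end{bmatrix}$. Distinguishing points of $J^{(1B)}$ then amounts to comparing the eigenvalues $(2c)^{-1/2}(\alpha\pm\sqrt{1-\beta^2})$ of $W$ up to a common phase.

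For the bullet points, your undirectedness argument is fine. The converse is immediate, since for $\beta=0$ and $\alpha\in\RR$ the displayed matrix is real symmetric. In the loop computation, however, your intermediate claim is false: $x$ is not a multiple of $\I$. In fact $x=\overline{\Tr W}\,W=c^{-1}\begin{bmatrix}|\alpha|^2&\overline{\alpha}\beta_+\\\overline{\alpha}\beta_-&|\alpha|^2\end{bmatrix}$. Both of your conclusions survive nonetheless: every entry carries a factor $\overline{\alpha}$, and the diagonal entries equal $|\alpha|^2/c<1$.

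Finally, ``loop-free iff $\alpha=0$'' makes every $A^{(1B)}_{0,\beta}$ with $\beta\in[0,1]$ loop-free. So the closing sentence of the statement holds only among undirected graphs. That is how you read it, but you should state this restriction explicitly rather than present the sentence as a plain consequence.
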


\begin{example}
    Now we compute the quantum Laplacians of all one-edge quantum graphs. Using Lemma \ref{Laplacian:formula}, we find that $\Delta_{\kappa A^{(1A)}\kappa^{-1}}=0$ and:
    \[ \Delta_{\kappa A^{(1B)}_{\alpha, \beta}\kappa^{-1}}=c^{-1}\left[\begin{array}{cccc}
         1+\beta^2 & 0 & 0 & -(1+\beta^2) \\
         0 & (1+\beta)^2 & -(1-\beta^2) & 0 \\
         0 & -(1-\beta^2) & (1-\beta)^2 & 0 \\
         -(1+\beta^2) & 0 & 0 & 1+\beta^2 \\
    \end{array}\right]. \]
    Finally, $\mathrm{spec}\left(\Delta_{\kappa A^{(1B)}_{\alpha, \beta}\kappa^{-1}}\right)=\left\{0, 0, \frac{2(1+\beta^2)}{c}, \frac{2(1+\beta^2)}{c}\right\}$. Since $0$ is an eigenvalue with multiplicity greater than $1$, the graph is not connected.
\end{example}

In order to classify two-edge unit tracial quantum graphs, we need to define another family of scalars.

\begin{definition}
    We define the set $J^{(2)}:=\{(\beta, \gamma, \delta): (**)\text{ is fulfilled}\}$, where
    \[ (**)\quad\begin{array}{rcl}
        \beta=0, \gamma/\delta \neq \pm i & \implies & \arg(\gamma)\in [0, \pi), \arg(\delta)=\arg(\gamma)+\pi, |\delta|\leq |\gamma|,\\  
         \beta=0, \gamma/\delta = \pm i & \implies & \gamma\in [0, \infty), \arg(\delta)=\arg(\gamma)+\pi, |\delta|\leq |\gamma|,\\ 
         \beta\in(0, 1) & \implies & \arg(\gamma)\in[0, \pi),\\
         \beta=1 & \implies & \gamma\in [0, \pi).
    \end{array} \]   
\end{definition}

\begin{theorem}\cite[Proposition 6.3]{Kiefer_2026} 
Let $(\beta, \gamma, \delta)\in J^{(2)}$. Denote $\delta_\pm=1\pm\delta$ and $\beta_\pm=1\pm\beta$. Any unit tracial quantum graph on $\Mat_2$ with two quantum edges is isomorphic to exactly one of the quantum graphs $\mathcal{G}_{\beta,\gamma,\delta}^{(2)}:=\left(\Mat_2, 2\Tr, A^{(2)}_{\beta, \gamma, \delta}\right)$ or $\mathcal{G}^{(2c)}_{\beta, \gamma, \delta}:=(\Mat_2, 2\Tr, A_c-A^{(2)}_{\beta, \gamma, \delta})$, where
 \begin{align*}
        A_{\beta,\gamma,\delta}^{(2)}=
        &c_1^{-1}
        \left[\begin{array}{cccc}
            0&0&0&\beta_+^2\\
            0&0&\beta_-\beta_+&0\\
            0&\beta_-\beta_+&0&0\\
            \beta_-^2&0&0&0
        \end{array}\right]\\
        &+
        c_2^{-1}
            \left[\begin{array}{cccc}
                |\delta_+|^2&i\overline{\gamma}\beta_-\delta_+&-i\gamma\beta_-\overline{\delta}_+&|\gamma|^2\beta_-^2\\
                -i\overline{\gamma}\beta_+\delta_+&\delta_+\overline{\delta}_-&-|\gamma|^2\beta_-\beta_+&-i\gamma\beta_-\overline{\delta}_-\\
                i\gamma\beta_+\overline{\delta}_+&-|\gamma|^2\beta_-\beta_+&\overline{\delta}_+\delta_-&i\overline{\gamma}\beta_-\delta_-\\
                |\gamma|^2\beta_+^2&i\gamma\beta_+\overline{\delta}_-&-i\overline{\gamma}\beta_+\delta_-&|\delta_-|^2
            \end{array}\right]
    \end{align*}    
        $c_1\vcentcolon=1+\beta^2$ and $c_2\vcentcolon=1+|\delta|^2+|\gamma|^2(1+\beta^2)$.
    The quantum graph $\mathcal{G}_{\beta,\gamma,\delta}^{(2)}$ is:
    \begin{itemize}
        \item undirected if and only if $\beta=0$ and $\gamma,\delta\in\RR$, 
        \item reflexive if and only if $\delta=\gamma=0$,
        \item never loop-free.
    \end{itemize}    
\end{theorem}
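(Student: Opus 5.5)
The classification part of the statement (every unit tracial two-edge quantum graph on $\Mat_2$ is isomorphic to exactly one $\mathcal{G}^{(2)}_{\beta,\gamma,\delta}$ or $\mathcal{G}^{(2c)}_{\beta,\gamma,\delta}$ with $(\beta,\gamma,\delta)\in J^{(2)}$) is quoted from \cite[Proposition 6.3]{Kiefer_2026}, and I would not reprove it. I would only recall why two families appear. Via Theorem~\ref{schur_product} and the map $\zeta$ of Definition~\ref{tenop}, a Schur idempotent $A$ is the same thing as a projection in $\Mat_2\otimes\Mat_2$, and the number of quantum edges is its rank. The complete graph $A_c$ corresponds to the rank-$4$ unit, so Lemma~\ref{graph_complement} sends rank-$2$ graphs to rank-$2$ graphs. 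Hence the list consists of a normal form $A^{(2)}_{\beta,\gamma,\delta}$ together with its complements $A_c-A^{(2)}_{\beta,\gamma,\delta}$.

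What I would actually verify are the three listed properties, working directly with the $4\times4$ matrix $\kappa A^{(2)}_{\beta,\gamma,\delta}\kappa^{-1}$.

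\emph{Undirectedness.} Since $A$ is $*$-preserving, Theorem~\ref{undirected_equivalence} shows that undirectedness (left/right symmetry) is equivalent to $A=A^\dagger$. Because $\Psi=2\Tr$ is tracial and $\kappa$ is a scaled isometry, this reduces to self-adjointness of the matrix in the statement. For the $c_1^{-1}$ block, comparing the $(1,4)$ and $(4,1)$ entries gives $\beta_+^2=\beta_-^2$, that is $\beta=0$. Once $\beta=0$, comparing the $(1,2)$ entry $i\overline{\gamma}\delta_+$ with the conjugate of the $(2,1)$ entry, $i\gamma\overline{\delta}_+$, and doing the same for the diagonal entries $\delta_+\overline{\delta}_-$, forces $\overline{\gamma}\delta_+=\gamma\overline{\delta}_+$ and $\delta_+\overline\delta_-\in\RR$. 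These give $\delta\in\RR$ and then $\gamma\in\RR$. The converse is read off directly. One subtlety remains: the two summands are weighted by $c_1^{-1}$ and $c_2^{-1}$, so their constraints must be imposed separately. I would justify this by noting that the two blocks have disjoint support patterns only partially, and I would check the overlapping entries, $(1,4)$ and $(2,3)$, explicitly.

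\emph{Loops.} By Proposition~\ref{LR_representation}~i), $m(A\otimes\id)m^\dagger=\pi_\Psi(x_A)$ with $x_A=\sum_s b_sa_s^*$ for $A=\sum_s\ket{b_s}\bra{a_s}$. In the tracial case, $\Psi(x_A)=\sum_s\Psi(a_s^*b_s)=\Tr(A)$. Irreflexivity means $x_A=0$, which would force $\Tr(A)=0$. Computing the trace of the matrix, the first block is off-diagonal and contributes $0$, so
\[
\Tr(A)=c_2^{-1}\bigl(|\delta_+|^2+\delta_+\overline{\delta}_-+\overline{\delta}_+\delta_-+|\delta_-|^2\bigr)=c_2^{-1}|\delta_++\delta_-|^2=4c_2^{-1}>0,
\]
hence the graph is never loop-free.

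\emph{Reflexivity.} Reflexivity means $x_A=\I$. Here I would compute $x_A$ explicitly. In the $\kappa$ picture, $x_A$ is a partial trace of $\kappa A\kappa^{-1}$ over the second tensor factor, analogous to the formula for $\kappa^{-1}(B\kappa(\I))$ in the lemma preceding Lemma~\ref{Laplacian:formula}. Concretely, $(x_A)_{ik}$ is a sum of entries $B_{(i,j),(k,j)}$. Plugging in, the $c_1^{-1}$ block contributes nothing to this partial trace. The $c_2^{-1}$ block gives diagonal entries $c_2^{-1}(|\delta_\pm|^2+\dots)$ and off-diagonal terms proportional to $\gamma$. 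Setting $x_A=\I$ should force $\gamma=0$ and then $c_2^{-1}(1+|\delta|^2\pm 2\operatorname{Re}\delta)=1$, hence $\delta=0$; conversely, $\gamma=\delta=0$ gives $x_A=\I$.

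I expect this last partial-trace computation to be the main obstacle. One must track the correct index convention of $\kappa$ and the normalization $\delta=1$ for $\Psi=2\Tr$, and a slip there would break the "iff". I would cross-check the normalization on $A^{(1A)}=\id$, which must give $x_A=\I$.
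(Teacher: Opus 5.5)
The paper gives no argument for this statement. The classification and all three bullet properties are imported wholesale from \cite[Proposition 6.3]{Kiefer_2026}. You also import the classification, including the uniqueness claim and the domain $J^{(2)}$, which your argument does not touch. You then verify the bullets inside the paper's framework. You reduce undirectedness to Hermiticity of $B=\kappa A^{(2)}_{\beta,\gamma,\delta}\kappa^{-1}$ via Theorem~\ref{undirected_equivalence}, and (ir)reflexivity to the element $x_A$ of Proposition~\ref{LR_representation}. This gives a reference-free check of the properties, which the citation does not. Your argument for ``never loop-free'', namely $\Tr(A)=\Psi(x_A)=4c_2^{-1}\neq 0$, is correct as written.

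Two steps need repair.

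\textbf{Undirectedness.} You cannot compare the $c_1^{-1}$ block on its own, because the $c_2^{-1}$ block also has entries at $(1,4)$ and $(4,1)$. The correct relation is $B_{14}-B_{41}=4\beta\,(c_1^{-1}-|\gamma|^2c_2^{-1})$. Then $\beta=0$ follows because $c_2-|\gamma|^2c_1=1+|\delta|^2>0$. To get $\gamma\in\RR$ also when $\delta=-1$, use the $(2,4)/(4,2)$ entries as well, which give $\overline{\gamma}\delta_-\in\RR$. Together with $\overline{\gamma}\delta_+\in\RR$ this yields $2\overline{\gamma}\in\RR$.

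\textbf{Reflexivity.} The partial trace carries a factor $\tfrac12$. For $A=\ket{b}\bra{a}$ one has $B=2\kappa(b)\kappa(a)^\dagger$, so $(x_A)_{ik}=\tfrac12\sum_j B_{(i,j),(k,j)}$, and this gives
\[
x_A=c_2^{-1}\begin{pmatrix}1+\delta & -i\gamma\beta_-\\ i\gamma\beta_+ & 1-\delta\end{pmatrix}.
\]
Your predicted diagonal entries $c_2^{-1}(1+|\delta|^2\pm 2\operatorname{Re}\delta)$ omit the cross terms $\delta_\pm\overline{\delta}_\mp$. Taken at face value they would give only $\operatorname{Re}\delta=0$, not $\delta=0$. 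With the correct $x_A$, $\gamma\beta_+=0$ forces $\gamma=0$, and then $1+\delta=1-\delta=c_2$ forces $\delta=0$.

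You can also bypass the partial trace entirely. Reflexivity gives $\Tr(A)=\Psi(\I)=4$, and you already computed $\Tr(A)=4c_2^{-1}$, so $c_2=1$, that is, $\gamma=\delta=0$. Conversely, at $\gamma=\delta=0$ the $c_2^{-1}$ block becomes the $4\times4$ identity, which contributes $x_{\id}=\I$. The $c_1^{-1}$ block vanishes at every entry $((i,j),(k,j))$, so by linearity $x_A=\I$.

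With these corrections your verification of the three properties is complete.
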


\begin{example}
    In this example, we compute the quantum Laplacians of all two-edge tracial unit quantum graphs. For any $(\beta, \gamma, \delta)\in J^{(2)}$, we evaluate:
    \[ \Delta_{\kappa A^{(2)}_{\beta, \gamma, \delta}\kappa^{-1}}=\left[\begin{array}{cccc}
1 + \frac{|\gamma|^2 c_1}{c_2} & -i\frac{\gamma\overline{\delta}\beta_+}{c_2} & i\frac{\gamma\overline{\delta}\beta_-}{c_2} & -\left(1 + \frac{|\gamma|^2 c_1}{c_2}\right) \\
i\frac{\overline{\gamma}\delta\beta_+}{c_2} & \frac{2|\delta|^2}{c_2} + \frac{\beta_+^2}{c_1} + \frac{|\gamma|^2\beta_-^2}{c_2} & -\frac{\beta_-\beta_+}{c_1} + \frac{|\gamma|^2\beta_-\beta_+}{c_2} & -i\frac{\overline{\gamma}\delta\beta_+}{c_2} \\
-i\frac{\overline{\gamma}\delta\beta_-}{c_2} & -\frac{\beta_-\beta_+}{c_1} + \frac{|\gamma|^2\beta_-\beta_+}{c_2} & \frac{2|\delta|^2}{c_2} + \frac{\beta_-^2}{c_1} + \frac{|\gamma|^2\beta_+^2}{c_2} & i\frac{\overline{\gamma}\delta\beta_-}{c_2} \\
-\left(1 + \frac{|\gamma|^2 c_1}{c_2}\right) & i\frac{\gamma\overline{\delta}\beta_+}{c_2} & -i\frac{\gamma\overline{\delta}\beta_-}{c_2} & 1 + \frac{|\gamma|^2 c_1}{c_2}
\end{array}\right]. \]
The spectrum of this matrix is $\mathrm{spec}\left(\Delta_{\kappa A^{(2)}_{\beta, \gamma, \delta}\kappa^{-1}}\right)=\left\{0, \frac{2(|\delta|^2+c_1|\gamma|^2)}{c_2},2,2+ \frac{2(|\delta|^2+c_1|\gamma|^2)}{c_2}\right\}$. Note that this graph is disconnected if and only if 
\[ 0=|\delta|^2+|\gamma|^2(1+\beta^2). \] In particular, the only disconnected two-edge unit tracial graph is $\mathcal{G}^{(2)}_{\beta,0,0}$.
\end{example}

After classifying quantum graphs with one and two edges, the three-edge case could in principle be treated by repeating the previous analysis. Nevertheless, a more efficient approach follows from the complement operation. The dimension of the underlying space $\mathrm{Mat}_2$ implies that a quantum graph can contain at most four mutually orthogonal edges. Therefore, once the complete quantum graph with four edges has been characterized, every three-edge graph can be constructed as the complement of a suitable one-edge graph.

Let us recall that for any quantum system $(\mathcal{M},\Psi)$, there is a canonical complete quantum graph associated with the adjacency operator
\[
A_c:=\Psi(\cdot)\I.
\]
In the setting of unital tracial quantum graphs over $\mathrm{Mat}_2$, the dimension of the underlying algebra is $4$, implying that the complete quantum graph contains exactly four quantum edges. We denote its adjacency operator by
\[
A^{(4)}:=2\operatorname{Tr}(\cdot)\I.
\]
A direct computation shows that its matrix representation with respect to the isometry $\kappa$ is
\[
\kappa A^{(4)}\kappa^{-1}
=
\begin{bmatrix}
2&0&0&2\\
0&0&0&0\\
0&0&0&0\\
2&0&0&2
\end{bmatrix}.
\]

Using Lemma~\ref{graph_complement}, we can systematically construct new quantum graphs on $\mathrm{Mat}_2$ by taking complements with respect to this complete quantum graph. Since the maximal number of mutually orthogonal quantum edges in this setting is four, the complement of any one-edge quantum graph is necessarily a three-edge quantum graph. This duality provides an efficient method for classifying all three-edge tracial unital quantum graphs: they are obtained simply by subtracting the adjacency operators of the one-edge graphs from the complete adjacency operator $A^{(4)}$.

\begin{example}
    Since all tracial unit quantum graphs with three edges are the graph complements of one-edge tracial unit quantum graphs, the adjacency matrices of such graphs are obtained naturally by taking $\kappa A^{(4)}\kappa^{-1} - \kappa A^{(1)}\kappa^{-1}$. Their adjacency matrices are given below:
    \[ \kappa A^{(3A)}\kappa^{-1}=\left[\begin{array}{cccc}
        1 & 0 & 0 & 2 \\
        0 & -1 & 0 & 0 \\
        0 & 0 & -1 & 0 \\
        2 & 0 & 0 & 1 \\
    \end{array}\right] \]
    and 
    \[ \kappa A^{(3B)}_{\alpha, \beta}\kappa^{-1} =c^{-1} \begin{bmatrix}
|\alpha|^2 + \beta_+^2 + \beta_-^2 & -\alpha \beta_+ & -\overline{\alpha}\beta_+ & 2|\alpha|^2 + \beta_-^2 \\
-\alpha \beta_- & -|\alpha|^2 & -\beta_-\beta_+ & -\overline{\alpha}\beta_+ \\
-\overline{\alpha}\beta_- & -\beta_-\beta_+ & -|\alpha|^2 & -\alpha \beta_+ \\
2|\alpha|^2 + \beta_+^2 & -\overline{\alpha}\beta_- & -\alpha \beta_- & |\alpha|^2 + \beta_+^2 + \beta_-^2
\end{bmatrix}, \]
where $(\alpha, \beta)\in J^{(1B)}$, $\beta_\pm=1\pm\beta$, and $c=|\alpha|^2+\beta^2+1$. Substituting this into the formula from Lemma \ref{Laplacian:formula}, we obtain:
\begin{equation}\label{A(3A):Laplacian}
\Delta_{\kappa A^{(3A)}\kappa^{-1}}=\left[\begin{array}{cccc}
    2 & 0 & 0 & -2 \\
    0 & 4 & 0 & 0 \\
    0 & 0 & 4 & 0\\
    -2 & 0 & 0 & 2 \\
\end{array}\right]
\end{equation}
and 
\[ \Delta_{\kappa A^{(3B)}_{\alpha, \beta}\kappa^{-1}}= \begin{bmatrix}
1 + \frac{|\alpha|^2}{c} & 0 & 0 & -\left(1 + \frac{|\alpha|^2}{c}\right) \\
0 & 3 + \frac{|\alpha|^2 - 2\beta}{c} & \frac{1-\beta^2}{c} & 0 \\
0 & \frac{1-\beta^2}{c} & 3 + \frac{|\alpha|^2 + 2\beta}{c} & 0 \\
-\left(1 + \frac{|\alpha|^2}{c}\right) & 0 & 0 & 1 + \frac{|\alpha|^2}{c}
\end{bmatrix}. \]
Moreover, $\mathrm{spec}\left(\Delta_{\kappa A^{(3A)}\kappa^{-1}}\right)=\{0, 4,4,4\}$ and $\mathrm{spec}\Bigl(\Delta_{\kappa A^{(3B)}_{\alpha, \beta}\kappa^{-1}}\Bigr)=\left\{0, \, 2+\frac{2\alpha^2}{c}, \, 2+\frac{2\alpha^2}{c}, \, 4\right\}$. In particular, we observe that these graphs are always connected.
\end{example}

\begin{example}
    In this example, we will compute the Laplacian of the complete unital tracial quantum graph on $\Mat_2$. We use the fact that
    \[ \Delta_{\kappa A^{(4)} \kappa^{-1}}=\Delta_{\kappa( A^{(3A)}+A^{(1A)} )\kappa^{-1}}=\Delta_{\kappa A^{(3A)} \kappa^{-1}}+\Delta_{\kappa A^{(1A)} \kappa^{-1}}=\Delta_{\kappa A^{(3A)} \kappa^{-1}}, \]
    which implies that this Laplacian is exactly given by formula \eqref{A(3A):Laplacian}. Note that because $A^{(4)}$ and $A^{(3A)}$ differ only by loops (as $A^{(1A)}$ represents a single loop), their Laplacians must be identical.
\end{example}

\begin{example}
    Using the linearity of the Laplacian again, we can easily compute the Laplacian of the complement graph $\mathcal{G}^{(2c)}_{\beta, \gamma, \delta}$:
    \[ \Delta_{\kappa A^{(2c)}\kappa^{-1}}=\Delta_{\kappa A^{(4)}\kappa^{-1}}-\Delta_{\kappa A^{(2)}_{\beta, \gamma, \delta}\kappa^{-1}}. \]
    Substituting the matrices calculated in the previous examples, we obtain:
    \[
    \resizebox{\textwidth}{!}{ $
    \Delta_{\kappa A^{(2c)}\kappa^{-1}} = \left[\begin{array}{cccc}
        \frac{1+|\delta|^2}{c_2} & i\frac{\gamma\overline{\delta}\beta_+}{c_2} & -i\frac{\gamma\overline{\delta}\beta_-}{c_2} & -\frac{1+|\delta|^2}{c_2} \\
        -i\frac{\overline{\gamma}\delta\beta_+}{c_2} & 4 - \frac{2|\delta|^2}{c_2} - \frac{\beta_+^2}{c_1} - \frac{|\gamma|^2\beta_-^2}{c_2} & \frac{\beta_-\beta_+}{c_1} - \frac{|\gamma|^2\beta_-\beta_+}{c_2} & i\frac{\overline{\gamma}\delta\beta_+}{c_2} \\
        i\frac{\overline{\gamma}\delta\beta_-}{c_2} & \frac{\beta_-\beta_+}{c_1} - \frac{|\gamma|^2\beta_-\beta_+}{c_2} & 4 - \frac{2|\delta|^2}{c_2} - \frac{\beta_-^2}{c_1} - \frac{|\gamma|^2\beta_+^2}{c_2} & -i\frac{\overline{\gamma}\delta\beta_-}{c_2} \\
        -\frac{1+|\delta|^2}{c_2} & -i\frac{\gamma\overline{\delta}\beta_+}{c_2} & i\frac{\gamma\overline{\delta}\beta_-}{c_2} & \frac{1+|\delta|^2}{c_2}
    \end{array}\right].
    $ }
    \]

    Moreover, since the complete graph Laplacian $\Delta_{\kappa A^{(4)}\kappa^{-1}}$ has eigenvalues $\{0, 4, 4, 4\}$ and shares the null space with $\Delta_{\kappa A^{(2)}_{\beta, \gamma, \delta}\kappa^{-1}}$, we can determine the spectrum of the complement graph directly. Knowing that $\mathrm{spec}\Bigl(\Delta_{\kappa A^{(2)}_{\beta, \gamma, \delta}\kappa^{-1}}\Bigr)=\left\{0, \frac{2(|\delta|^2+c_1|\gamma|^2)}{c_2},2,2+ \frac{2(|\delta|^2+c_1|\gamma|^2)}{c_2}\right\}$, the spectrum of the complement graph is simply derived from the difference: $\left\{0, 4-\frac{2(|\delta|^2+c_1|\gamma|^2)}{c_2}, 4-2, 4-\left(2+\frac{2(|\delta|^2+c_1|\gamma|^2)}{c_2}\right)\right\}$. This simplifies neatly to:
    \[ \mathrm{spec}\left(\Delta_{\kappa A^{(2c)}\kappa^{-1}}\right) = \left\{0, 2, \frac{2}{c_2}, 2+\frac{2}{c_2}\right\}. \]
    Notice that since $c_2 \geq 1$, the algebraic multiplicity of the eigenvalue $0$ is always exactly $1$. Consequently, the complement graph $\mathcal{G}^{(2c)}_{\beta, \gamma, \delta}$ is always connected.
\end{example}

\section*{Acknowledgments}
The study was funded by ``Laboratories of the Young'' as part of the ``Excellence Initiative – Research University'' program at Jagiellonian University in Kraków (LM/44/BA). AB is supported by the Alexander von Humboldt Foundation. The work of AB was partially funded by the Deutsche Forschungsgemeinschaft (DFG, German Research Foundation) under Germany’s Excellence Strategy - EXC-2111 – 390814868. This is a part of the international project Graph Algebras partially supported by EU grant HORIZON-MSCA SE-2021 Project 101086394 and co-financed by the Polish Ministry of Education and Science within the program PMW under contract 5447/HE/2023/2.

\bibliography{biblproj}{}
\bibliographystyle{plain}
\end{document}